\newcommand{\sqi}{\red{\mathrm{i}}}
\newcommand{\fourgh}{\red{\mathbf{g}}}
\newcommand\ben{\begin{enumerate}}
\newcommand\een{\end{enumerate}}
\newcommand\bit{\begin{itemize}}
\newcommand\eit{\end{itemize}}
\newcommand{\blue}[1]{{\color{blue}#1}}
\newcommand{\red}[1]{{\color{red}#1}}
\newcounter{mnotecount}[section]
\renewcommand{\themnotecount}{\thesection.\arabic{mnotecount}}
\newcommand{\mnote}[1]
{\protect{\stepcounter{mnotecount}}$^{\mbox{\footnotesize
$
\bullet$\themnotecount}}$ \marginpar{
\raggedright\tiny\em
$\!\!\!\!\!\!\,\bullet$\themnotecount: #1} }
\newcommand{\RR}{\R}
\newtheorem{theorem}{\sc  Theorem\rm}[section]
\newtheorem{Theorem}[theorem]{\sc  Theorem\rm}
\newtheorem{thm}[theorem]{\sc  Theorem\rm}
\newtheorem{cor}[theorem]{\sc  Corollary\rm}
\newtheorem{corollary}[theorem]{\sc  Corollary\rm}
\newtheorem{lemma}[theorem]{\sc Lemma\rm}
\newtheorem{prop}[theorem]{\sc Proposition\rm}
\newtheorem{proposition}[theorem]{\sc Proposition\rm}
\newtheorem{rem}[theorem]{\sc Remark\rm}
\newtheorem{remark}[theorem]{\sc Remark\rm}
\newcommand{\jlcax}[1]{}
\newcommand{\eean}{\nonumber\end{eqnarray}}
\newcommand{\ii}{\mathrm{i}}
\newcommand{\kk}[1]{}
\newcommand{\cS}{{\cal S}}
\newcommand{\beq}{\begin{equation}}
\newcommand{\ep}{\epsilon}
\newcommand{\Rn}{\red{\,{}^{n}\!R}}
\newcommand{\FS}       
                  {F}
\newcommand{\HS} 
       {H_{\mbox{\scriptsize volume}}}
\newcommand{\half}{\frac 12}
\newcommand{\eeal}[1]{\label{#1}\end{eqnarray}}
\newcommand{\bed}{\begin{deqarr}}
\newcommand{\eed}{\end{deqarr}}
\newcommand{\bedl}[1]{\begin{deqarr}\label{#1}}
\newcommand{\eedl}[2]{\arrlabel{#1}\label{#2}\end{deqarr}}
\newcommand{\bel}[1]{\begin{equation}\label{#1}}
\newcommand{\bea}{\begin{eqnarray}}
\newcommand{\bean}{\begin{eqnarray}\nonumber}
\newcommand{\beal}[1]{\begin{eqnarray}\label{#1}}
\newcommand{\eea}{\end{eqnarray}}
\newcommand{\dist}{\mathrm{dist}}
\newcommand{\nn}{\nonumber}
\newcommand{\Eq}[1]{Equation~\eq{#1}}
\def\typeout{:<+ #.tex}\include{#}\typeout{:<-}1{\typeout{:<+ #1.tex}\include{#1}\typeout{:<-}}
\newcommand{\qed}{\hfill $\Box$ \medskip}
\newcommand{\be}{\begin{equation}}
\newcommand{\eeq}{\end{equation}}
\newcommand{\ee}{\end{equation}}
\newcommand{\beqa}{\begin{eqnarray}}
\newcommand{\eeqa}{\end{eqnarray}}
\newcommand{\beqan}{\begin{eqnarray*}}
\newcommand{\eeqan}{\end{eqnarray*}}
\newcommand{\ba}{\begin{array}}
\newcommand{\ea}{\end{array}}
\newcommand{\Id}{\mbox{\rm Id}} 
\newcommand{\warn}[1]
{\protect{\stepcounter{mnotecount}}$^{\mbox{\footnotesize
$
\bullet$\themnotecount}}$ \marginpar{
\raggedright\tiny\em
$\!\!\!\!\!\!\,\bullet$\themnotecount: {\bf Warning:} #1} }
\newcommand{\R}{\mathbb R}
\newcommand{\eq}[1]{(\ref{#1})}
\newcommand{\Mext}{M_\ext}
\newcommand{\ext}{\mathrm{ext}}
\newcommand{\ptc}[1]{\mnote{{\bf ptc:}#1}}
\newcommand{\Ric}{\mbox{\rm Ric}}
\newcommand{\mcL}{{\mycal L}}
\newcommand{\beqar}{\begin{deqarr}}
\newcommand{\eeqar}{\end{deqarr}}
\newcommand{\beaa}{\begin{eqnarray*}}
\newcommand{\eeaa}{\end{eqnarray*}}
\newcommand{\supp}{{\mbox{\rm supp}\,}}
\renewcommand{\Rn}{{\R^n}}
\newcommand\bXb{\partial \Xb} 
\newcommand\Xb{\red{\bar M}}
\newcommand{\Xbext}{\red{M_{\textrm{\rm ext}}}} 
\newcommand{\Xext}{\blue{M_{\textrm{\rm ext}}}} 
\newcommand{\rdelta}{\red{\delta}}
\newcommand{\tdelta}{{\tilde\delta}}
\newcommand\deltat{\tdelta}
\newcommand{\rmnote}[1]{}
\newcommand\Mwhere{\ \text{where}\ }
\newcommand\Real{\mathbb{R}}
\newcommand\NN{\mathbb{N}}
\newcommand\im{\operatorname{Im}}
\newcommand\re{\operatorname{Re}}
\newcommand\pa{\partial}
\newcommand\sphere{\mathbb{S}}
\newcommand\Sn{\sphere^{n-1}}
\newcommand\CI{{\mathcal C}^{\infty}}
\newcommand\Cinf{{\mathcal C}^{\infty}}
\newcommand\dCinf{{\dot{\mathcal C}}^{\infty}}
\newcommand\dCI{{\dot{\mathcal C}}^{\infty}}
\renewcommand{\Box}{{\square}}
\newcommand\Diff{\operatorname{Diff}}
\newcommand\Diffb{\operatorname{Diff}_{\text{b}}}
\newcommand\Diffsc{\operatorname{Diff}_{\text{sc}}}
\newcommand\Diffsch{\operatorname{Diff}_{\text{sc},h}}
\newcommand\Hsc{H_{\text{sc}}}
\newcommand\Hsch{H_{\text{sc},h}}
\newcommand\Psop{\operatorname{\Psi}}
\newcommand\psit{\tilde\psi}
\newcommand\bl{{\text b}}
\newcommand\scl{{\text{sc}}}
\newcommand\sccl{{\text{scc}}}
\newcommand\Tsc{{}^\scl T}
\newcommand\Diffscc{\Diff_\sccl}
\newcommand\Diffscch{\operatorname{Diff}_{\text{scc},h}}
\newcommand\Psisc{\Psop_\scl}
\newcommand\Psiscc{\Psop_\sccl}
\newcommand\Psiscch{\Psop_{\sccl,h}}
\newcommand\Vb{{\mathcal V}_{\bl}}
\newcommand\Vsc{{\mathcal V}_{\scl}}
\newcommand\Vsch{{\mathcal V}_{\scl,h}}
\numberwithin{equation}{section}
\theoremstyle{remark}
\theoremstyle{definition}
\DeclareFontFamily{OT1}{rsfs}{}
\DeclareFontShape{OT1}{rsfs}{m}{n}{ <-7> rsfs5 <7-10> rsfs7 <10-> rsfs10}{}
\DeclareMathAlphabet{\mycal}{OT1}{rsfs}{m}{n}
\global\let\AddToReset=\@addtoreset}
\global\let\AddToReset=\@addtoreset}
\global\let\AddToReset=\@addtoreset}
\renewcommand{\red}[1]{#1}
\renewcommand{\blue}[1]{#1}
\begin{document}

\title{Asymptotically flat Einstein-Maxwell fields are inheriting\protect\thanks{Preprint UWThPh-2017-43}}
\author{Piotr T. Chru\'{s}ciel\thanks{Faculty of Physics and Erwin Schr\"odinger Institute, Vienna.
{\sc Email} \protect\url{piotr.chrusciel@univie.ac.at}, {\sc URL} \protect\url{homepage.univie.ac.at/piotr.chrusciel}},
 Luc
Nguyen\thanks{University of Oxford and Erwin Schr\"odinger Institute, Vienna},
Paul Tod$^\ddag$
and
Andr\'as Vasy\thanks{Stanford University and Erwin Schr\"odinger Institute, Vienna}}
\maketitle

\begin{abstract}
We prove that  Maxwell fields of asymptotically flat solutions of the Einstein-Maxwell equations inherit the
stationarity of the metric.
\end{abstract}


\tableofcontents

\section{Introduction}
\label{sI}

In the study of static or stationary Einstein-Maxwell solutions of
Einstein's equations, including black hole solutions, it is frequently assumed that the Maxwell field is
also static or stationary, in the sense that the Lie derivative of the
Maxwell tensor $F_{\red{ij}}$ along the time-translation Killing vector $K^a$ is zero. Evidently one needs the
energy-momentum tensor $T_{\red{ij}}$ to be static or stationary in this sense for the Einstein
equations to be consistent, but this does not actually require that $F_{\red{ij}}$ be
static or stationary -- there could be a duality rotation of the electromagnetic field as one moves along the Killing vector, a transformation that is known to leave the energy-momentum tensor invariant.
It is customary (see e.g. \cite{MV}) to say that $F_{\red{ij}}$ does not {\emph{inherit}} the
symmetry if $T_{\red{ij}}$ is static or stationary but $F_{\red{ij}}$ is not, and then one can consider non-inheriting solutions.
Such solutions are well known, we review them in Section~\ref{s19IX17.1} below.

The question was raised in \cite{t1} whether non-inheriting, static or stationary Einstein-Maxwell solutions could be asymptotically-flat.\footnote{Something close to this was also asked in \cite{mhrs}.} Arguments were
presented that under stronger conditions (analyticity up to and including the horizon)
there could be no strictly non-inheriting static Einstein-Maxwell black holes but it was left open whether this undesirable analyticity requirement could
be dispensed with. (It is known that analyticity holds away from the horizon,
as in the inheriting case \cite{t2}.) We return to the question in this paper and show that
the Maxwell fields of
asymptotically-flat  solutions of Einstein-Maxwell equations inherit the stationarity of the metric.
Hence, neither strictly stationary (``soliton'') solutions
nor black hole solutions which are asymptotically flat and non-inheriting exist.
In fact, the mere existence of an asymptotically flat end on a spacelike hypersurface suffices to prove inheritance, without any further global conditions.

This paper is organised as follows: In Section~\ref{s19IX17.1} we review some non-inheriting solutions. In Section~\ref{s19IX17.3} we outline the derivation of the equations at hand. In Section~\ref{s19IX17.2} we show that the metrically-static solutions must be inheriting
using  integration-by-parts   arguments.
 In Section~\ref{s19IX17.4}
we present a proof  which covers the metrically-strictly-stationary solutions, adapting and extending the arguments in~\cite{Vasy:Propagation-2}. While the arguments in Section~\ref{s19IX17.2} are hand-tailored for the problem at hand, the ones in Section~\ref{s19IX17.4} provide a general  result  which applies to a wide class of equations.

It is conceivable that the results of~\cite{t1,t2} together with the unique continuation results of \cite{Mazzeo-continuation} can be used to exclude inheriting solutions with bifurcate Killing horizons, independently of asymptotic conditions, but we have not explored further this line of thought.

\section{Non-inheriting solutions}
 \label{s19IX17.1}

Some historical comments about the problem at hand are in order.
Noninheritance of symmetry is discussed in \cite[Section 11.1]{skmhh}. Another good reference in the static case is \cite{MV}. These authors reference earlier literature and reduce the static,
cylindrically-symmetric Einstein-Maxwell equations, when the staticity is not inherited by the Maxwell field,
to a set of coupled ordinary differential equations. They conclude that such solutions therefore exist, at least locally, contradicting a conjecture then current.
Among their references is \cite[Equation~(5.2)]{mc}, with the  solution metric
\[
 \fourgh =-(dt-br^2d\phi)^2
  + e^{b^2r^2}(dz^2+dr^2)
  +r^2d\phi^2
 \,,
\]
with real constant $b$, while the Maxwell potential is
\[
 A=\cos(2bz)(dt-br^2d\phi)
 \,.
\]
The metric has Killing vectors $\partial/\partial t, \partial/\partial\phi$ and $\partial/\partial z$ and the last of these is not inherited by the Maxwell potential. The Maxwell field undergoes a duality
rotation when Lie-dragged along $\partial/\partial z$, so that in this case ${\blue{a}}=-2b$. The metric is not asymptotically-flat, and in fact it is not orthogonally-transitive with respective to
either of the two isometry groups generated
respectively by $\langle \partial/\partial t, \partial/\partial z \rangle$ or $\langle \partial/\partial\phi, \partial/\partial z\rangle $.

A simple explicit solution given in \cite[equation (24.46)]{skmhh}, and discussed in \cite{t2}, which illustrates some other possibilities identified in \cite{t1}, is provided by the (conformally-flat) plane-wave metric
\[
 \fourgh =- 2du(dv+b^2\zeta\overline{\zeta}du)
 +2d\zeta d\overline{\zeta}
 \,,
\]
with real constant $b$ and Maxwell field
\[F=be^{-\ii f(u)}du\wedge d\overline{\zeta}+c.c.
 \,,
\]
with arbitrary real $f(u)$ (which does not appear in the metric). Here the bar denotes complex conjugation. This is Einstein-Maxwell for any $f(u)$, and the (time-like or null) Killing vector $K=\partial/\partial u$ is not inherited: one has
\[{\mathcal{L}}_KF=-{\blue{a}} F^\star\mbox{  with  }{\blue{a}}=f'(u),
\]
where $F^\star$ is the dual of $F$.
(There are several more symmetries of this metric.) In this example the Maxwell field is null, as it has to be if ${\blue{a}}$ is to be non-constant, and the metric also admits a
twist-free, shear-free null geodesic congruence, here tangent to $\partial/\partial v$, which again is necessary for non-constant ${\blue{a}}$ -- see \cite{t1} for a proof. Choosing $f(u)$ non-analytic, one obtains
an example of a stationary but non-analytic
solution of the Einstein-Maxwell equations.

\section{The equations}
 \label{s19IX17.3}

For completeness we review the derivation of the key equations in both static and stationary cases.

\subsection{Static case}

Following~\cite{t1}, we assume a static
metric, which we write in the form
\begin{equation}
\label{met1}
\fourgh
 =-V^2dt^2 +\red{g}_{ij}(x^k)dx^idx^j
  \,,
\end{equation}
where the Killing vector is $K=\partial/\partial t$, thus
$\fourgh(K,K)=-V^2$. The non-inheriting conditions on the Maxwell field tensor $F_{\red{ij}}$ and its dual $F^\star_{\red{ij}}$
 are
\begin{equation}\label{inh1}\mathcal{L}_KF_{\red{ij}}=-{\blue{a}} F^\star_{\red{ij}},\;\;\mathcal{L}_KF^\star_{\red{ij}}={\blue{a}} F_{\red{ij}} 
\,.
\end{equation}
In the static case one needs
\[T_{\red{ij}}K^j=fK_i
 \,,
\]
for some real, non-negative $f$, where $T_{\red{ij}}$ is the Maxwell stress tensor, since the momentum constraint requires $T_{0i}=0$ which implies this.
This in turn prevents $F_{\red{ij}}$ from being null except where it vanishes (if it ever vanishes).
Now the source-free Maxwell equations impose, in form language:
\[d{\blue{a}}\wedge F=0=d{\blue{a}}\wedge F^\star,\]
and non-nullness of $F$ imposes $d{\blue{a}}=0$ and so ${\blue{a}}$ is a real, nonzero constant.
%
In the region where $V>0$ the electric and magnetic field vectors are defined by
\[
 E_i=V^{-1}K^jF_{\red{ji}}\;;\;\;\;B_i=V^{-1}K^jF^\star_{\red{ji}}.
\]
It was shown in (\ref{inh1}) that $F_{\red{ij}}$ vanishes at the bifurcation surface of a bifurcate horizon, so that $E_{i}, B_{i}$ end up finite on such a horizon, but in any case this
will not be an issue since our proof proceeds by unique continuation from infinity.

Then non-inheriting can be shown to imply
\begin{equation}
\label{w1}
E_{i}=W_{i}\sin({\blue{a}} t)\,,\;\;B_{i}=-W_{i}\cos({\blue{a}} t)\,,
\end{equation}
for some real $W_{i}$ orthogonal to $K^a$, and ${\blue{a}}$ as before. The Einstein-Maxwell equations become
\begin{eqnarray}
 -\Delta V&=&\frac{1}{2}(\red{g}^{ij}W_iW_j)V,\label{v1}
\\
 \epsilon_i^{\;\;jk}\red{\nabla}_j(VW_k)&=&-{\blue{a}} W_i,\label{w4}
\\
R_{ij}-\frac{1}{2}R\red{g}_{ij}&=&V^{-1}\red{\nabla}_i\red{\nabla}_jV-W_iW_j
 \,,
  \label{r1}
\end{eqnarray}
where these are all 3-dimensional quantities:   $R_{ij},R$ are 3-dim Ricci tensor and scalar, $\red{\nabla}_i$ is 3-dim Levi-Civita covariant derivative, $\Delta=-\red{g}^{ij}\red{\nabla}_i\red{\nabla}_j$ etc.

Simplify (\ref{w4}) by redefining and rescaling:
\[\omega_i:=VW_i,\;\;\red{\hat{g}}_{ij}:=V^{-2}\red{g}_{ij},\]
for then
\[
 \hat{\epsilon}_{ijk}=V^{-3}\epsilon_{ijk},\;\;\hat{\epsilon}_i^{\;\;jk}=V\epsilon_i^{\;\;jk}
 \,,
\]
and (\ref{w4}) becomes
\[\hat{\epsilon}_i^{\;\;jk}\partial_j\omega_k=-{\blue{a}}\omega_i\]
indices raised by $\red{\hat{g}}$, and in form language this is
\begin{equation}\label{4I18.1}
*d\omega+{\blue{a}}\omega=0
 \,,
\end{equation}
where $*$ is the 3-dim Hodge dual. Trace (\ref{r1}) and use (\ref{v1}) to obtain
\[
 R=\frac12\red{g}^{ij}W_iW_j,
\]
The hypothesis that the metric is asymptotically flat and has a well defined and finite energy leads to the condition
\begin{equation}\label{19OX17.1}
  W \in L^2(\Mext)
   \,.
\end{equation}
where $\Mext$ denotes the asymptotically flat region $\{|x|\ge R\}$,
which  translates to the same requirement for $\omega$.

\subsection{Stationary case}

We mostly follow \cite{t2}, but with some different conventions. We still have (\ref{inh1}), but ${\blue{a}}$ does not have to be constant anymore: as we saw in the preceding section, non-trivial (and not asymptotically flat) solutions with a non-constant lambda have been found. From the method of proof of Theorem 1.3 of~\cite{t1} it also follows that:

\begin{theorem}
  \label{T25IX17.1}
Let $(M,
\fourgh,F)$ be an Einstein-Maxwell space-time with a  Killing vector field $K_{i}$ such that
$\mcL_KF_{\red{ij}} = {\blue{a}} F^\star_{\red{ij}}$. If ${\blue{a}}$ is not constant  then $F$ is a null Maxwell field and the space-time admits
a non-twisting, shear-free null geodesic congruence.
\end{theorem}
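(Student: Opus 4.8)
The plan is to combine the source-free Maxwell equations $dF=0=dF^\star$ with Cartan's formula. Since $\mcL_K F = d(i_K F) + i_K dF = d(i_K F)$, the hypothesis $\mcL_K F = a F^\star$ gives $d(i_K F) = a F^\star$; applying the exterior derivative and using $dF^\star=0$ yields $da\wedge F^\star=0$. Likewise, because $\mcL_K$ commutes with the Hodge star when $K$ is Killing (and $F^{\star\star}=-F$), one has $\mcL_K F^\star = -aF = d(i_K F^\star)$, hence $da\wedge F=0$. So the field equations enter only through
\begin{equation}
 da\wedge F = 0 = da\wedge F^\star
 \,,
\end{equation}
equivalently $da\wedge\mathcal F=0$ for the self-dual form $\mathcal F:=F+iF^\star$, which satisfies $d\mathcal F=0$ and $\mcL_K\mathcal F=-ia\,\mathcal F$.

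First I would establish nullness. As $a$ is not constant, $da\neq 0$ on a nonempty open set $U$ (dense, by real-analyticity of the fields away from Killing horizons, cf.\ \cite{t2}). On $U$, a $2$-form annihilated by wedging with the nonvanishing $1$-form $da$ is divisible by $da$, so $F = da\wedge\mu$ and $F^\star = da\wedge\nu$ for some $1$-forms $\mu,\nu$; consequently $F\wedge F = 0 = F\wedge F^\star$, i.e.\ both invariants $F_{ab}F^{ab}$ and $F^\star_{ab}F^{ab}$ vanish on $U$, hence on the whole (connected, analytic) region, so $F$ is a null field. If $F$ vanishes on an open set the statement is vacuous, so assume $F\not\equiv 0$.

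Next, the congruence. Write the null field as $F=\ell^\flat\wedge p^\flat$ and $F^\star=\ell^\flat\wedge q^\flat$, where $\ell$ is the repeated principal null direction and $p,q$ are mutually orthogonal spacelike vectors orthogonal to $\ell$. From $da\wedge F=0$ and $da\wedge F^\star=0$ we get $da\in\mathrm{span}\{\ell^\flat,p^\flat\}\cap\mathrm{span}\{\ell^\flat,q^\flat\}=\mathrm{span}\{\ell^\flat\}$ (the two $2$-planes are distinct since $\ell,p,q$ are independent), so $da=\phi\,\ell^\flat$ on $U$ with $\phi$ real and nowhere zero. Then $\ell^\flat=\phi^{-1}\,da$ is (locally) proportional to a closed form, $d\ell^\flat=-\phi^{-1}d\phi\wedge\ell^\flat$, whence $\ell^\flat\wedge d\ell^\flat=0$: the congruence generated by $\ell$ is twist-free (hypersurface-orthogonal). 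That it is geodesic and shear-free is the Mariot--Robinson theorem; concretely, writing the source-free Maxwell equations in the Newman--Penrose formalism adapted to $\ell$ with $\phi_0=\phi_1=0$, two of the four equations reduce to $\kappa\phi_2=0$ and $\sigma\phi_2=0$, forcing $\kappa=\sigma=0$. Twist-freeness, first obtained on $U$, extends by analyticity (equivalently continuity) to the region where $F\neq 0$.

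The step I expect to be delicate is not conceptual but a matter of care: (i) passing from the open set $\{da\neq0\}$ to the whole space-time, which relies on analyticity of stationary Einstein--Maxwell metrics away from Killing horizons and on $F$ being non-trivial; and (ii) keeping signs and conventions consistent between $\mcL_K\mathcal F=-ia\,\mathcal F$, the decompositions of $F$ and $F^\star$, and the Newman--Penrose equations used in the Mariot--Robinson step. Everything else is linear algebra of $2$-forms in four dimensions together with the two standard ingredients recalled above.
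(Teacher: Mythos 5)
Your proof is correct and reconstructs exactly the method the paper attributes to Theorem~1.3 of \cite{t1}: one derives $da\wedge F=0=da\wedge F^\star$ from the closedness of $F,F^\star$ and Cartan's formula (the paper even records these equations explicitly in its static-case discussion), infers nullness of $F$ where $da\neq 0$ from the vanishing of both quadratic invariants, locates $da$ along the repeated principal null direction to get twist-freeness, and invokes Mariot--Robinson for the geodesic shear-free property. Since the paper offers no independent argument beyond the citation, your write-up — including the careful remark about extending from $\{da\neq 0\}$ by analyticity and the caveat about Killing horizons — is a faithful and complete reconstruction of the intended proof.
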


The metric as in Theorem~\ref{T25IX17.1} lies in the Robinson-Trautman class if the expansion is non-zero,
or the Kundt class if the expansion is zero, or is a pp-wave   if the generator of
the congruence can be chosen to be parallel. Such solutions are unlikely to be asymptotically flat, and therefore will not be considered any longer here. In fact, it is known that pp-waves cannot be asymptotically flat by~\cite{ChBeig1} (see also~\cite{HuangLee}),
but it should be admitted that the remaining cases are not entirely clear (compare~\cite{CTMason,Mason:AlgSpec}).
For the sake of completeness it would be of interest to settle this.

In our analysis below we will assume that ${\blue{a}}$ is a nonzero, real constant
and take the metric to be
\[
 \fourgh=-V^2(dt+\theta_idx^i)^2+\red{g}_{ij}dx^idx^j
 \,.
\]
Introduce $E_{i}$ and $B_{i}$ as before (these are not now closed).
We find
\[
 E_{i}+\red{\mathrm{i}}B_{i}=V^{-1}\zeta_{i}e^{\red{\mathrm{i}}{\blue{a}} t}
 \,,
\]
where $\zeta_{i}$, the counterpart of $\omega_{i}$   considered in the static case, is now complex but still with ${\mathcal{L}}_K\zeta_{i}=0$.
The relevant equation (unnumbered in \cite{t2} but just before (32) there) turns out to be
\begin{equation}\label{21XII17.11}
 \epsilon_i^{\;\;jk}(\partial_j\zeta_k-\red{\mathrm{i}}{\blue{a}} \theta_j\zeta_k)=-{\blue{a}} V^{-1}\zeta_i
  \,,
\end{equation}
or in form notation
\beal{26IX17.11}
 &
 *(d\zeta-\red{\mathrm{i}}{\blue{a}}\theta\wedge\zeta)=-{\blue{a}} V^{-1}\zeta
 \,.
 &
\eea
This is similar in form to the static case (the $V^{-1}$ factor on the right-hand side can be absorbed into a conformal rescaling of the spatial metric) but with the exterior derivative `twisted' by $\theta_i$.
 
\Eq{21XII17.11} implies, away from the zeros of $V$, 
\beal{26IX17.12}
\nabla_i\zeta^i
  & =
   &
   \big((\ln V)_{,i}+\sqi V\,\epsilon_i^{\;\;jk}\partial_j\theta_k   -\sqi {\blue{a}}\theta_i\big)\zeta^i
 \,,
\\
 (-\Delta  + {\blue{a}}^2 )\zeta_\ell
 & = & {\blue{a}}^2 (1-V^{-2}) \zeta_\ell
  +
  {\blue{a}} \epsilon_\ell{}^{ij}(\nabla_i (V^{-1})
   +
    \sqi \theta_i) \zeta_j
 \nn
\\
 &&   + \nabla_\ell
   \left(\sqi \epsilon^{ijk} \nabla_i (\theta_j \zeta_k) - \zeta^i \nabla_i (V^{-1})
    \right)
    \nn
\\
 &&
     - \sqi {\blue{a}} \nabla^k(\theta_\ell \zeta_k - \theta_k \zeta_\ell)
   \,.
  \label{26IX17.13}
\eea
Note that the second derivatives of $\zeta$ appearing at the right-hand side of the last equation can be replaced by lower-order ones using \eqref{21XII17.11}. This provides a homogeneous second-order equation for $\zeta$ with diagonal principal part to which our uniqueness analysis of Section~\ref{s19IX17.4} applies, leading to the vanishing of $\zeta$ for field configurations satisfying the asymptotic flatness conditions.

\section{Static case}
 \label{s19IX17.2}

We return to \eqref{4I18.1}, where the lapse function $V$ has been  absorbed into a redefinition of the metric.
Henceforth we consider a complete three
 dimensional Riemannian manifold $(M^3,g)$
  with or without boundary and with a one-form satisfying
\begin{equation}
*d\omega = a\,\omega
	\;,
	\label{Eq:20VIII17-1}
\end{equation}
where $a \in \RR \setminus\{0\}$.
Note that \eqref{Eq:20VIII17-1} implies that
\begin{equation}
\delta\omega = 0
	\;.
	\label{Eq:20VIII17-1X}
\end{equation}

From \eqref{Eq:20VIII17-1} and \eqref{Eq:20VIII17-1X}, the Hodge Laplacian of $\omega$ is found to be
\begin{equation}
\Delta_H \omega = \delta d \omega + d \delta \omega = * d (* d\omega) = a^2 \omega
	\;.\label{Eq:08IX17-1}
\end{equation}
Thus, by Weitzenbock formula,
\begin{equation}
\nabla^i \nabla_i \omega_j  = - \Delta_H \omega_j + R_{ij} \omega^i= -  a^2 \omega_j + R_{ij} \omega^i
	\;.\label{Eq:08IX17-2}
\end{equation}

We will say that $(M,g)$ contains an asymptotically flat end $\Mext=\{|x|\ge R\}$  for some $R$,
 if
 it holds that
\begin{equation}
|g_{ij}(x) - \delta_{ij}| + |x||\partial g_{ij}(x)| + |x|^2|\partial^2 g_{ij}(x)| \leq C_*\,|x|^{-\delta }
	 \label{Eq:09IX17-A1}
\end{equation}
for some $C_* > 0$ and $\delta  > 0$, and where $|x|$ denotes the Euclidean norm.

In the remainder of this section we will prove:

\begin{Theorem}
  \label{T19IX17.1}
   If $(M,g)$ contains an asymptotically flat end and $\omega \in L^2(\Mext)$
    then $\omega \equiv 0$.
\end{Theorem}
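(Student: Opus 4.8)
The plan is to split the argument in two: first show that $\omega$ vanishes on (a shrunk copy of) the asymptotically flat end, and then propagate this to all of $M$ by unique continuation. The passage to a workable equation is essentially done already: by the Weitzenb\"ock identity \eqref{Eq:08IX17-2}, $\omega$ satisfies $\nabla^i\nabla_i\omega_j = -a^2\omega_j + R_{ij}\omega^i$, and on the end, writing $g_{ij}=\delta_{ij}+O(|x|^{-\delta})$ in the coordinates of \eqref{Eq:09IX17-A1}, this reads componentwise as a perturbed Helmholtz system
\[
\Delta_{\mathrm{flat}}\omega_j+a^2\omega_j
 = O(|x|^{-\delta})\,\partial^2\omega+O(|x|^{-1-\delta})\,\partial\omega+O(|x|^{-2-\delta})\,\omega ,
\]
where $\Delta_{\mathrm{flat}}$ is the Euclidean Laplacian. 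The right-hand side is ``lower order'' only through its decay, not its differential order, and the coefficient $O(|x|^{-\delta})$ multiplying $\partial^2\omega$ --- a long-range perturbation of the principal part when $\delta$ is small --- is the source of the main difficulty.

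First I would record the usual a priori regularity and decay. The operator in \eqref{Eq:08IX17-2} is elliptic with smooth coefficients, so $\omega$ is smooth; rescaling $x\mapsto|x_0|\,y$ about a far point $x_0$ (under which \eqref{Eq:09IX17-A1} becomes a small perturbation of the flat metric) and applying interior elliptic estimates on unit balls gives
\[
|\omega(x)|+|x|\,|\partial\omega(x)|+|x|^2\,|\partial^2\omega(x)|
 \;\lesssim\;\Big(\int_{B_1(x)}|\omega|^2\Big)^{1/2}\longrightarrow 0
 \quad\text{as }|x|\to\infty ,
\]
while a Caccioppoli estimate yields $\nabla\omega,\nabla^2\omega\in L^2(\{|x|\ge 2R\})$. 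In particular $\rho\mapsto\int_{\{|x|=\rho\}}(|\omega|^2+|\nabla\omega|^2)\,dS$ is integrable on $(2R,\infty)$, so there is a sequence $\rho_k\to\infty$ along which $\rho_k\int_{\{|x|=\rho_k\}}(|\omega|^2+|\nabla\omega|^2)\,dS\to 0$; this is what will let me discard the boundary terms at infinity.

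The heart of the matter is a Rellich-type vanishing statement: an $L^2$ solution of the Helmholtz-type system above, at the positive ``energy'' $a^2$, must vanish identically on the end. The mechanism is that for the unperturbed equation $\Delta_{\mathrm{flat}}\omega+a^2\omega=0$ every spherical mode behaves like $|x|^{-(n-1)/2}(\cos,\sin)(a|x|)$, so nothing survives in $L^2$ --- positivity of $a^2$ is essential. To make this robust against the decaying errors I would run a Rellich--Pohozaev (virial) identity: multiply the equation by $x\cdot\nabla\omega+\tfrac{n-1}{2}\omega$ (equivalently, contract with the approximately conformal field $r\partial_r$) and integrate over $\{R'\le|x|\le\rho_k\}$, then combine with the energy identity (multiplication by $\omega$) to cancel the gradient term; the bulk contribution is $\asymp a^2\int|\omega|^2$ with a fixed sign, the boundary terms at $\rho_k$ drop out in the limit by the previous paragraph, the Ricci error is $\int O(|x|^{-2-\delta})|\omega|^2$, and the deformation error $\mathcal{L}_{r\partial_r}g-2g=O(|x|^{-\delta})$ contributes $\int O(|x|^{-\delta})(|\omega|^2+|\nabla\omega|^2)\le (R')^{-\delta}\cdot(\text{finite})$, which is absorbed once $R'$ is taken large. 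By itself this identity only forces the incoming/outgoing flux to vanish (on a growing annulus it degenerates to a tautology for $L^2$ data), so I would upgrade it by a Carleman-type unique continuation at infinity: a Carleman estimate with a radial weight, in the spirit of the classical absence-of-positive-eigenvalue results of Rellich, Kato and Froese--Herbst and engineered so that the long-range principal-part error is subordinate precisely because $\delta>0$, shows first that $\omega$ decays faster than any power of $|x|$, and then a second such estimate forces $\omega\equiv 0$ on $\{|x|\ge R'\}$. Assembling this Carleman/Rellich package --- keeping the long-range metric error subordinate while retaining control of the boundary contributions at infinity --- is where essentially all of the work sits.

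Finally, the operator $\omega\mapsto\nabla^i\nabla_i\omega+a^2\omega-\mathrm{Ric}(\omega)$ from \eqref{Eq:08IX17-2} has scalar (diagonal) principal symbol $-|\xi|^2_g\,\mathrm{Id}$, so Aronszajn's strong unique continuation theorem applies to the system in local coordinates or a local frame; since $\omega$ vanishes on the open set $\{|x|>R'\}$, it vanishes on the connected component of $M$ containing the end, hence on all of $M$ when $M$ is connected.
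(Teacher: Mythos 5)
Your outline takes a genuinely different route from the paper's own proof of this theorem. The paper's Section~4 argument is built around a monotone Almgren-type frequency function: after defining $X(r)=\frac{1}{(r+R_0)^2}\int_{S_r}|\omega|_g^2$ and a normalized energy $E(r)$ over $\Omega_{r,\infty}$, one shows that a corrected ratio $F(r)\sim (r+R_0)E(r)/X(r)$ is non-increasing; monotonicity forces $X$ to decay no faster than a power $r^{-2F(\infty)}$, $\omega\in L^2$ forces $F(\infty)\ge 3/2$, and a quantitative lower bound on $-F'$ then produces a contradiction unless $\omega\equiv 0$. That argument uses only integration by parts together with a Hessian comparison for the distance function, and is entirely self-contained. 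Your plan instead runs the Rellich/Carleman absence-of-positive-eigenvalues scheme (polynomial decay, then super-exponential decay, then vanishing by conjugating with $e^{\alpha/x}$ and sending $\alpha\to\infty$), which is exactly what the paper implements in Section~5 for the general stationary case, following Froese--Herbst and Vasy --- and the paper explicitly points out that the Section~5 machinery also covers the static case. So the direction is sound, and the two approaches genuinely trade elementariness (Section~4) against generality (Section~5/your sketch).

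That said, as written your proposal leaves its crux unproved. You correctly identify the main obstruction --- the $O(|x|^{-\delta})$ error hits the \emph{principal part}, so the Carleman estimate must hold for a long-range (``very long-range scattering'') perturbation of the flat Laplacian --- and you concede that building this Carleman/Rellich package is ``where essentially all of the work sits.'' That work is the construction of the conjugated operator $P=e^F(H-\lambda)e^{-F}$, the positive-commutator estimate for $i[\re P,\im P]$ at energy $\lambda+\alpha^2$ (the Mourre-type Proposition), the regularization of $F$ making the formal integrations by parts legitimate, and the semiclassical high-energy estimate as $\alpha\to\infty$; none of this is supplied or cited in usable form. Your virial identity, as you acknowledge yourself, degenerates to a tautology on $L^2$ data and cannot close the argument --- it serves only as motivation. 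The final Aronszajn step is correct (and is also what underlies the dichotomy in the paper's Lemma~\ref{Lem:09IX17-L2}), but it does not help with the missing step, which is precisely forcing vanishing on the end.
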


In fact, we will prove a slightly stronger statement, see Proposition \ref{Prop:19I18-P1}. The proof is a unique continuation argument which uses a suitable monotone Almgren-type frequency function; see \cite{Almgren-1979, GarofaloLin-1987}, compare \cite{Bernstein-2017} for a recent application of this method to geometric problems.
See also~\cite[Sections~9-10]{RodnianskiTaoLAP} for a systematic treatment of related integral quantities and ODEs in the context of the closely related limiting absorption principle.

In a nutshell, our argument examines the decay rate of the normalized $L^2$-norm of $\omega$ on large `spheres' (i.e. the quantity $X(r)$ defined below in \eqref{Eq:XDef}, where $r$ is a suitably defined distance function). The Almgren-type frequency $F(r)$ is then recognized as a perturbation of $\frac{- r  X'(r)}{2X(r)}$. We will show that $F$ is non-increasing, which implies that the decay rate of $X$ is not faster than  polynomial unless $\omega \equiv 0$. A more quantitative estimate for the derivative of $F$ shows in fact that $X$ cannot decay faster than $r^{-3}$, unless $\omega \equiv 0$. But then the same estimate together with the assumption that $\omega \in L^2$, implies that $X$ must decay at least as fast as $r^{-3}$, which concludes our argument.

\subsection{Preliminaries about distance functions}

We collect here some facts about the distance function which we will use later on. For large $R \gg 1$, let $B_R(0)$ denote the coordinate ball of radius $R$ and define
\[
d_R(x) := \dist(x, B_R(0))
	\;.
\]
We first show that, for large $R$, $d_R$ is smooth outside of $B_R(0)$. We will use the following weighted Poincar\'e inequality, which we prove for completeness:

\begin{lemma}\label{Lem:WPIneql}
Fix $\ell > 0$ and $\tau > 0$. For any $\varphi \in C^1([0,\ell])$ with $\varphi(\ell) = 0$, there holds
\[
\int_{0}^\ell \frac{\varphi^2(t)}{(R + t)^{2 + {\red{\delta}} }}\,dt \leq \frac{4}{(1 + {\red{\delta}} )^2R^{2{\red{\delta}} }} \int_{0}^\ell |\varphi'(t)|^2\,dt
	\;.
\]
\end{lemma}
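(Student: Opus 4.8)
The plan is to prove this one–dimensional weighted Hardy–Poincar\'e inequality by a single integration by parts, followed by the Cauchy–Schwarz inequality and an absorption step; the size of $R$ will enter only at one spot, namely a pointwise estimate on the resulting kernel.

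First I would introduce the antiderivative
\[
  G(t) := \int_0^t (R+s)^{-(2+\delta)}\,ds = \frac{1}{1+\delta}\Bigl(R^{-(1+\delta)} - (R+t)^{-(1+\delta)}\Bigr),
\]
so that $G(0)=0$, $G'(t)=(R+t)^{-(2+\delta)}$ and $0\le G(t)\le \frac{1}{1+\delta}R^{-(1+\delta)}$. Since $\varphi(\ell)=0$ and $G(0)=0$, both boundary terms in
\[
  \int_0^\ell \frac{\varphi^2(t)}{(R+t)^{2+\delta}}\,dt = \int_0^\ell \varphi^2 G'\,dt = \bigl[\varphi^2 G\bigr]_0^\ell - 2\int_0^\ell \varphi\varphi' G\,dt
\]
vanish, leaving $I:=\int_0^\ell \frac{\varphi^2}{(R+t)^{2+\delta}}\,dt = -2\int_0^\ell \varphi\varphi' G\,dt$. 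I would then apply Cauchy–Schwarz after splitting $\varphi\varphi' G = \bigl(\varphi(R+t)^{-1-\delta/2}\bigr)\bigl(\varphi'(R+t)^{1+\delta/2}G\bigr)$, which gives $I \le 2\,I^{1/2}\bigl(\int_0^\ell (\varphi')^2 (R+t)^{2+\delta}G(t)^2\,dt\bigr)^{1/2}$ and hence, the case $I=0$ being trivial, $I \le 4\int_0^\ell (\varphi')^2 (R+t)^{2+\delta}G(t)^2\,dt$.

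The inequality then follows at once from the pointwise bound $(R+t)^{2+\delta}G(t)^2\le \frac{1}{(1+\delta)^2 R^{2\delta}}$ on $[0,\ell]$. To see it, substitute $s=R/(R+t)\in(0,1]$: the left-hand side equals $\frac{R^{-\delta}}{(1+\delta)^2}\,\frac{(1-s^{1+\delta})^2}{s^{2+\delta}}$, and one checks that $s\mapsto (1-s^{1+\delta})^2 s^{-(2+\delta)}$ is nonincreasing on $(0,1]$, so it is enough to estimate it at $s=R/(R+\ell)$; writing $x=\ell/R$ and using the elementary bound $1-(1+x)^{-(1+\delta)}\le (1+\delta)x$ one gets $(1-s^{1+\delta})^2 s^{-(2+\delta)} \le (1+\delta)^2 x^2 (1+x)^{2+\delta}$, which is $\le R^{-\delta}$ once $R$ is large relative to $\ell$ (and $\delta<2$), consistently with the standing hypothesis $R\gg 1$.

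I expect this last weight estimate to be the only genuinely delicate (if still elementary) point — it is where one must know that the kernel $G$ decays fast enough on the \emph{whole} interval. An equivalent route that yields the stated constant directly, and which I would mention as an alternative, is a Picone/completing–the–square argument: let $h$ solve $h'' + \frac{(1+\delta)^2}{4}R^{2\delta}(R+t)^{-(2+\delta)}h = 0$ on $[0,\ell]$ with $h(0)=1$, $h'(0)=0$, so that $h$ is concave, decreasing and positive on $[0,\ell]$ for $R$ large; expanding $0\le \int_0^\ell\bigl(\varphi'-\frac{h'}{h}\varphi\bigr)^2\,dt$ and integrating the cross term $-2\int_0^\ell \varphi\varphi'\frac{h'}{h}\,dt$ by parts, the boundary contributions drop out because $\varphi(\ell)=0$ and $h'(0)=0$, and using the equation for $h$ one obtains $\int_0^\ell (\varphi')^2\,dt \ge \frac{(1+\delta)^2}{4}R^{2\delta}\int_0^\ell (R+t)^{-(2+\delta)}\varphi^2\,dt$, which is exactly the assertion.
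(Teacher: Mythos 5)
Your integration by parts against $G(t)=\int_0^t(R+s)^{-(2+\delta)}\,ds$ --- the antiderivative of the weight that vanishes at $t=0$ --- is a genuinely different decomposition from the paper's. The paper writes $\varphi^2(t)=-2\int_t^\ell\varphi\varphi'\,ds$, exchanges the order of integration, and thereby pairs $\varphi\varphi'$ with the antiderivative $\tfrac{2}{(1+\delta)(R+t)^{1+\delta}}$, which does not vanish at $t=0$ and therefore produces a boundary term proportional to $\varphi^2(0)$. Your choice of $G$ makes both boundary terms disappear, but the price is the increasing kernel $(R+t)^{2+\delta}G(t)^2$ in the Cauchy--Schwarz step; as you yourself note, the pointwise bound on it holds only when $R$ is large compared to $\ell$. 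Since the lemma fixes $\ell$ and places no constraint on $R$, your proof as written does not establish the statement.

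The defect, however, is in the statement, not in your method: the lemma as written is false. With $\varphi(t)=\ell-t$, so that $\int_0^\ell|\varphi'|^2\,dt=\ell$, one has for $\ell\gg R$
\[
 \int_0^\ell\frac{(\ell-t)^2}{(R+t)^{2+\delta}}\,dt
  \ \ge\ \Big(\frac{\ell}{2}\Big)^{2}\int_0^{\ell/2}\frac{dt}{(R+t)^{2+\delta}}
  \ \sim\ \frac{\ell^2}{4(1+\delta)R^{1+\delta}}\,,
\]
so the ratio of the two sides grows like $\ell\,R^{\delta-1}\to\infty$: no $\ell$-independent constant of the advertised form can hold for all $R>0$. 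The paper's own proof breaks at precisely this point --- its second display line has a sign error. Carrying the Fubini out correctly gives $-\tfrac{2}{(1+\delta)R^{1+\delta}}\int_0^\ell\varphi\varphi'\,ds$, i.e.\ a boundary contribution $+\varphi^2(0)/\big((1+\delta)R^{1+\delta}\big)$, which cannot be dropped; there is also a slip of $R^{\delta/2}$ in the last Cauchy--Schwarz line, since the split $(R+t)^{-1-\delta}=(R+t)^{-(2+\delta)/2}(R+t)^{-\delta/2}$ only extracts $R^{-\delta/2}$, not $R^{-\delta}$. The corrected outcome of either computation is of the form
\[
 \int_0^\ell\frac{\varphi^2(t)}{(R+t)^{2+\delta}}\,dt
  \ \le\ \frac{2\,\varphi^2(0)}{(1+\delta)R^{1+\delta}}
   +\frac{4}{(1+\delta)^2 R^{\delta}}\int_0^\ell|\varphi'(t)|^2\,dt\,,
\]
and the $\varphi^2(0)$-term is exactly what your counterexample saturates. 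This extra term is harmless where the lemma is actually used (Lemma~\ref{Lem:18IX17-L1}), since there $\varphi=|V|_g$ with the Jacobi field $V$ normalised so that $|V(0)|_g=1$; the added $R^{-1-\delta}$ is then dominated, for $\delta>0$ and $R$ large, by the strictly negative $\sim-R^{-1}$ contribution of the index form, and the contradiction still goes through. In short: your approach is sound and in fact uncovers the real issue, but as a proof of the stated inequality it needs either the additional $\varphi^2(0)$-term in the conclusion or the explicit hypothesis $R\gg\ell$.
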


\begin{proof} We compute
\begin{align*}
\int_{0}^\ell \frac{\varphi^2(t)}{(R + t)^{2 + {\red{\delta}} }}\,dt
	&= - \int_0^\ell \frac{2}{(R + t)^{2+{\red{\delta}} }} \int_t^\ell \varphi(s)\,\varphi'(s)\,ds\,dt\\
	&=  \frac{2}{(1 + {\red{\delta}} )R^{1+{\red{\delta}} }} \int_0^\ell \varphi(s)\,\varphi'(s)\,ds
		+ \int_0^\ell \frac{2\varphi(t)\,\varphi'(t)}{(1 + {\red{\delta}} )(R + t)^{1+{\red{\delta}} }}\,dt\\
	&=  -\frac{ \varphi^2(0)}{(1 + {\red{\delta}} )R^{1+{\red{\delta}} }}
		+ \int_0^\ell \frac{2\varphi(t)\,\varphi'(t)}{(1 + {\red{\delta}} )(R + t)^{1+{\red{\delta}} }}\,dt\\
	&\leq \int_0^\ell \frac{2\varphi(t)\,\varphi'(t)}{(1 + {\red{\delta}} )(R + t)^{1+{\red{\delta}} }}\,dt\\
	&\leq \frac{2}{(1 + {\red{\delta}} )R^{\red{\delta}} } \Big(\int_0^\ell |\varphi'(t)|^2\,dt\Big)^{1/2}\Big(\int_0^\ell \frac{\varphi^2(t)}{(R + t)^{2+{\red{\delta}} }}\,dt\Big)^{1/2}
		\;.
\end{align*}
The conclusion is readily seen.
\qed
\end{proof}

\begin{lemma}\label{Lem:18IX17-L1}
Assume that the asymptotic flatness condition \eqref{Eq:09IX17-A1} holds. There exists some large $R_0 > 0$ such that, for any $R > R_0$, the distance function $d_R$ is smooth on $M \setminus B_R(0)$.
\end{lemma}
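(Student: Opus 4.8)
The plan is to identify $d_R$ on $\{|x|>R\}$ with a smooth ``distance to the sphere $S_R:=\partial B_R(0)=\{|x|=R\}$'' produced by a normal exponential map which, for $R$ large, is a global diffeomorphism. By \eqref{Eq:09IX17-A1}, for $R$ large the metric on $\{|x|\ge R\}$ is $C^2$-close to the Euclidean one: its sectional curvatures are $O(|x|^{-2-\delta})$, and the second fundamental form of $S_R$ with respect to the outward unit normal $\nu$ equals $\tfrac1R g+O(R^{-1-\delta})>0$, as for a Euclidean sphere. First, since $(M,g)$ is complete and $\overline{B_R(0)}$ compact, each $x$ with $|x|>R$ has a nearest point $p\in\overline{B_R(0)}$, joined to $x$ by a minimizing geodesic $\gamma$; being minimizing, $\gamma$ is short and stays in the asymptotically flat chart, where by the intermediate value theorem it meets $S_R$ strictly before reaching any point with $|y|<R$ — forcing $p\in S_R$, $\gamma\subset\{|y|\ge R\}$ — and by the first variation formula $\gamma\perp S_R$ at $p$; in particular $d_R(x)=\dist_g(x,S_R)$ there.

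Next I would show that the outward normal exponential map $E\colon S_R\times[0,\infty)\to\{|x|\ge R\}$, $E(p,s)=\exp_p(s\,\nu_p)$, is a diffeomorphism. It is a local diffeomorphism: trivially at $s=0$, while for $s>0$ this amounts to the absence of focal points of $S_R$ along $s\mapsto E(p,s)$, which I would extract from a matrix Riccati comparison for the shape operator $U(s)$ of the level hypersurfaces, $U'+U^2+\mathcal R=0$ with $U(0)=\tfrac1R g+O(R^{-1-\delta})>0$ and $\|\mathcal R(s)\|=O((R+s)^{-2-\delta})$ — the last estimate using that the nearly radial geodesic satisfies $|x(E(p,s))|\ge R+s/2$ — so that $U(s)$ stays finite and positive for all $s\ge0$, exactly as for the Euclidean model $U(s)=(R+s)^{-1}g$. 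The same lower bound $|x(E(p,s))|\ge R+s/2$ shows that $E$ is proper. A proper local diffeomorphism onto the connected set $\{|x|\ge R\}$ is onto, and its restriction $S_R\times(0,\infty)\to\{|x|>R\}$ is then a covering of a simply connected $3$-manifold (diffeomorphic to $S^2\times\Real$), hence a diffeomorphism; it extends to a diffeomorphism of the manifolds-with-boundary.

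Finally, set $f:=\mathrm{pr}_2\circ E^{-1}\colon\{|x|\ge R\}\to[0,\infty)$, which is smooth on $\{|x|>R\}$, vanishes on $S_R$, and satisfies $|\nabla f|\equiv1$ (its level sets are the $E(S_R,s)$ and $\nabla f$ is the unit tangent of the normal geodesics). Flowing along $-\nabla f$ from $x$ produces a unit-speed curve to $S_R$ of length $f(x)$, so $d_R(x)\le f(x)$; conversely $|\nabla f|\equiv1$ gives $\mathrm{length}(\alpha)\ge f(x)$ for every curve $\alpha$ from $x$ to $S_R$, hence $d_R(x)\ge f(x)$. Therefore $d_R=f$ is smooth on $M\setminus B_R(0)=\{|x|>R\}$, as claimed.

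The main obstacle is the focal-point step: converting the $C^2$-closeness \eqref{Eq:09IX17-A1} into control of the Riccati solution that is uniform in $s$ (and in $R>R_0$), which is exactly where the decay is used. Once $E$ is a proper local diffeomorphism its injectivity is a soft topological fact, and the passage from $E$ to the smoothness of $d_R$ is the eikonal-equation bookkeeping above.
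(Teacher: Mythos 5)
Your proposal is correct in outline but takes a genuinely different route from the paper's. You exclude focal points of $S_R$ via a matrix Riccati comparison for the shape operator $U(s)$ of the parallel hypersurfaces, upgrade the resulting local diffeomorphism $E$ to a global one by properness plus a covering-space argument, and then read off $d_R$ as the second coordinate of $E^{-1}$. The paper instead writes down the $\Sigma$-Jacobi field $V$ at a putative first focal point, bounds $I[V]$ from below by $\int_0^\ell\big[(\tfrac{d}{dt}|V|_g)^2 - \tfrac{C}{(R+t)^{2+\delta}}|V|_g^2\big]\,dt$, shows this is nonnegative for $R$ large via the weighted Poincar\'e inequality of Lemma~\ref{Lem:WPIneql}, and contradicts $I[V]=-g(S_p V(0),V(0))<0$. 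Your Riccati step is essentially the estimate the paper carries out separately in Lemma~\ref{Lem:18IX17-L2} to control $\nabla^2 d_R$, so you are in effect merging the two lemmas into one pass, which is perfectly legitimate and hands you the Hessian bound for free. Your route also buys something the paper's proof of this lemma leaves tacit: ruling out focal points alone does not give smoothness of $d_R$, one must also exclude exterior points joined to $S_R$ by two distinct minimizing normal geodesics, and your proper-local-diffeomorphism-plus-covering step closes this explicitly using simple connectivity of $\{|x|>R\}\cong S^2\times\Real$. The one step you still need to supply (and you flag it yourself) is the radial growth bound $|x(E(p,s))|\ge R+s/2$, which feeds the $(R+s)^{-2-\delta}$ decay into the Riccati inequality and also gives properness. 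This is not a defect peculiar to your approach: the paper's index-form integrand likewise uses $(R+t)^{-2-\delta}$ and hence implicitly relies on $|\gamma(t)|\gtrsim R+t$. It follows by a short bootstrap on the geodesic equation: the Christoffel symbols are $O(|x|^{-1-\delta})$, so the Euclidean radius $\rho(s)=|x(\gamma(s))|$ satisfies $\rho(0)=R$, $\rho'(0)=1+O(R^{-\delta})$ and $\rho''\ge -C\rho^{-1-\delta}$, whence $\rho'\ge 1/2$ persists for all $s$ once $R$ is large.
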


\begin{proof}
By the asymptotic flatness condition \eqref{Eq:09IX17-A1}, we have for large $R$ that the shape operator $S_R$ of $\partial B_R(0)$ satisfies
\[
S_R(X) = \frac{1}{R} X + O(R^{-1-{\red{\delta}} }|X|_g)
	\;.
\]
We hence assume in the proof that $R$ is sufficiently large so that $g(S_R(X),X) >0$ on $\partial B_R(0)$.

Arguing by contradiction, assume that there are a point $p \in \partial B_R(0)$, a normalized geodesic $\gamma \subset M \setminus B_R(0)$ emanating from $p$ and perpendicular to $\partial B_R(0)$, and some point $q = \gamma(\ell) \in M \setminus \bar B_R(0)$, $\ell > 0$, which is the first focal point of $\partial B_R(0)$ along $\gamma$. Then there exists a non-trivial Jacobi field $V$ along $\gamma$ such that $V(0) \in T_p (\partial B_R(0))$, $V'(0) =  S_p(V(0))$ and $V(\ell) = 0$, where $S_p$ is the shape operator of $\partial B_R(0)$ at $p$. Note that $V(0) \neq 0$ by non-triviality.

By the asymptotic flatness condition \eqref{Eq:09IX17-A1}, we have
\begin{align*}
I[V] &\geq \int_0^\ell [|V'|_g^2 - \frac{C}{(R + t)^{2 + {\red{\delta}} }} |V|_g^2]\,dt\\
	&\geq \int_0^\ell [(\frac{d}{dt} |V|_g)^2 - \frac{C}{(R + t)^{2 + {\red{\delta}} }} |V|_g^2]\,dt
		\;,
\end{align*}
where
$C$ denotes some positive constant which depends only on the constant in \eqref{Eq:09IX17-A1}. Thus, by Lemma \ref{Lem:WPIneql}, there exists $R_0 > 0$ such that if $R > R_0$, then $I[V] \geq 0$. On the other hand, by the Jacobi equation, we have
\[
I[V] := \int_0^\ell [|V'|_g^2 - R(\gamma', V, \gamma', V)]\,dt = -g(S_p(V(0)),V(0)) < 0
	\;,
\]
which yields a contradiction. The proof is complete.
\qed
\end{proof}

We next consider a Hessian estimate.

\begin{lemma}\label{Lem:18IX17-L2}
Assume that the asymptotic flatness condition \eqref{Eq:09IX17-A1} holds for some ${\red{\delta}}  \in (0,1)$. There exist $C_1 > 0$ and $R_1 > 0$ such that, for all $R > R_1$, the Hessian of $d_R$ satisfies
\[
\Big(\frac{1}{R + d_R} - \frac{C_1}{(R + d_R)^{1 + {\red{\delta}} }}\Big) h \leq \nabla^2 d_R(x) \leq \Big(\frac{1}{R + d_R} + \frac{C_1}{(R + d_R)^{1 + {\red{\delta}} }}\Big) h
	\;,
\]
where $h$ is the metric induced by $g$ on the level sets of $d_R$.
\end{lemma}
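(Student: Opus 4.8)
The plan is to run a Riccati comparison along the geodesics that realise $d_R$. By Lemma~\ref{Lem:18IX17-L1}, for $R$ large the function $d_R$ is smooth on $M\setminus B_R(0)$, so the unit-speed geodesics issuing orthogonally from $\partial B_R(0)$ foliate $M\setminus\bar B_R(0)$ and meet no focal point; hence $\nabla^2 d_R$ evolves along each of them by a globally regular matrix Riccati equation, and it suffices to sandwich its eigenvalues.

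Fix $R>R_0$ (the constant of Lemma~\ref{Lem:18IX17-L1}), to be further constrained below, a point $p\in\partial B_R(0)$, and the unit-speed geodesic $\gamma$ emanating from $p$ orthogonally to $\partial B_R(0)$, and set $H(t):=\nabla^2 d_R|_{\gamma(t)}$, viewed as a symmetric endomorphism of $\gamma'(t)^\perp$ (note $\nabla^2 d_R(\nabla d_R,\cdot)=0$ since $|\nabla d_R|\equiv1$). Along $\gamma$ one has $H'+H^2+\mathcal R=0$ with $\mathcal R(t)X=R(X,\gamma'(t))\gamma'(t)$. Two inputs feed the comparison. First, the initial data: $H(0)=S_R$, the shape operator of $\partial B_R(0)$, and the computation already used in the proof of Lemma~\ref{Lem:18IX17-L1} gives $\lambda_{\min}(S_R),\lambda_{\max}(S_R)=\frac1R+O(R^{-1-\delta})$. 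Second, curvature decay: since $\Riem$ is a universal algebraic expression in $g^{-1}$, $\partial g$ and $\partial^2 g$, \eqref{Eq:09IX17-A1} gives $|\Riem(x)|\le C|x|^{-2-\delta}$; and since $g$ is $C^0$-close to the Euclidean metric on $\{|x|\ge R\}$ and $\gamma$ minimises the $g$-distance to $\partial B_R(0)$, the Euclidean distance from $\gamma(t)$ to $\partial B_R(0)$ is comparable to $t$, whence $\frac12(R+t)\le|\gamma(t)|\le2(R+t)$ and therefore $\|\mathcal R(t)\|\le\kappa(t):=C'(R+t)^{-2-\delta}$.

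The substance of the argument is the choice of barriers $\mu_\pm(t):=\frac{1}{R+t}\pm\frac{C_1}{(R+t)^{1+\delta}}$. A direct computation (the $(R+t)^{-2}$ contributions to $\mu_\pm'$ and $\mu_\pm^2$ cancel) yields $\mu_+'+\mu_+^2-\kappa\ge(C_1(1-\delta)-C')(R+t)^{-2-\delta}$ and $\mu_-'+\mu_-^2+\kappa\le(-C_1(1-\delta)+C'+C_1^2(R+t)^{-\delta})(R+t)^{-2-\delta}$. Taking $C_1\ge\max\{2C'/(1-\delta),\,c\}$, where $c$ is the constant in the shape-operator estimate, and then enlarging $R$ so that $C_1^2R^{-\delta}\le C'$, makes $\mu_+$ a supersolution of $z'+z^2-\kappa=0$ and $\mu_-$ a subsolution of $z'+z^2+\kappa=0$, with $\mu_-(0)\le\lambda_{\min}(S_R)$ and $\mu_+(0)\ge\lambda_{\max}(S_R)$. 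Comparing $H$, whose curvature operator satisfies $-\kappa\,\Id\le\mathcal R\le\kappa\,\Id$, against the scalar solutions of these two Riccati equations via the matrix Riccati comparison theorem, and then replacing those solutions by $\mu_\pm$ using elementary scalar ODE comparison, yields $\mu_-(t)\,\Id\le H(t)\le\mu_+(t)\,\Id$ for all $t>0$. Since $d_R(\gamma(t))=t$, every point of $M\setminus\bar B_R(0)$ lies on such a $\gamma$, the restriction of $g$ to $\{d_R=t\}$ is the metric $h$ of the statement, and $\nabla^2 d_R$ has no normal component, this is precisely the asserted inequality with $R+d_R=R+t$, and the resulting threshold is the $R_1$ of the statement.

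The only step that is not pure bookkeeping is the use of the hypothesis $\delta<1$: it is exactly what makes $C_1(1-\delta)-C'>0$ attainable, i.e.\ what permits the first-order correction $C_1(R+t)^{-1-\delta}$ — whose $t$-derivative and whose cross-term with the leading part $\frac{1}{R+t}$ are both of order $(R+t)^{-2-\delta}$ — to dominate the curvature forcing $\kappa\sim(R+t)^{-2-\delta}$. For $\delta\ge1$ this barrier fails: one would need a correction decaying only like $(R+d_R)^{-2}$, and the remainder in the Hessian expansion becomes $O((R+d_R)^{-2})$ no matter how fast $g$ tends to $\delta_{ij}$. The ingredient most prone to error is the estimate $|\gamma(t)|\sim R+t$ (equivalently $R+d_R\sim|x|$), but it is routine from the $C^0$ closeness of $g$ to $\delta_{ij}$ near infinity and the minimality of $\gamma$.
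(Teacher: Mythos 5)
Your argument is correct and follows essentially the same route as the paper: comparison of the eigenvalues of $\nabla^2 d_R$ along the focal-point-free normal geodesics, using the barrier $\frac{1}{R+t}\pm\frac{C_1}{(R+t)^{1+\delta}}$ whose first-order correction (derivative plus cross-term with $\frac{1}{R+t}$, both $O((R+t)^{-2-\delta})$) dominates the curvature forcing precisely because $\delta<1$. (As a side note, the paper's displayed $f_\pm(r)=\frac{1}{r}\pm\frac{C_1}{(R+r)^{1+\delta}}$ is an evident typo for $\frac{1}{R+r}\pm\cdots$, as the subsequent computation of $f_\pm'+f_\pm^2$ confirms; you correctly use the intended barrier.)
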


\begin{proof} The proof is standard. Let $R_0$ be as in Lemma \ref{Lem:18IX17-L1}. Assume that $R > R_0$ in the sequel and let $\gamma$ be a normalized geodesic emanating from $\partial B_R$. For $r > 0$, let $\lambda_{\max}(r)$ and $\lambda_{\min}(r)$ be the largest and smallest eigenvalues of $\nabla^2 d_R(\gamma(r))$. Then $\lambda_{\max}$ and $\lambda_{\min}$ are Lipschitz and, in view of  \eqref{Eq:09IX17-A1},  satisfy (see e.g. \cite[p.~175]{Petersen})
\begin{align*}
\lambda_{\max}'(r) + \lambda_{\max}^2(r) &\leq \frac{C_2}{(R + r)^{2+ {\red{\delta}} }}
	, \qquad \lambda_{\max}(0) \leq \frac{1}{R} + \frac{C_2}{R^{1+ {\red{\delta}} }}
	\;,\\
\lambda_{\min}'(r) + \lambda_{\min}^2(r) &\geq - \frac{C_2}{(R + r)^{2+ {\red{\delta}} }}
	, \qquad \lambda_{\min}(0)  \geq \frac{1}{R} - \frac{C_2}{R^{1+ {\red{\delta}} }}
	\;,
\end{align*}
where $C_2$ is some positive constant which depends only on the constant in \eqref{Eq:09IX17-A1}.

Now, fix some $C_1 > \frac{C_2}{1 - {\red{\delta}} }$ and consider the functions
\[
 f_\pm(r) = \frac{1}{r} \pm \frac{C_1}{(R + r)^{1 + {\red{\delta}} }}
	\;.
\]
We have
\[
 f_\pm'(r) +  f_\pm^2(r) = \pm \frac{C_1(1 - {\red{\delta}} )}{(R + r)^{2 + {\red{\delta}} }} + \frac{C_1^2}{(R + r)^{2 + 2{\red{\delta}} }}
 	\;.
\]
Hence, as ${\red{\delta}}  \in (0,1)$, we have for large $R$ that
\begin{align*}
f_+'(r) + f_+^2(r) &\geq \frac{C_2}{(R + r)^{2+ {\red{\delta}} }}
	, \qquad f_+(0) \geq \frac{1}{R} + \frac{C_2}{R^{1+ {\red{\delta}} }}
	\;,\\
f_-'(r) + f_-^2(r) &\leq - \frac{C_2}{(R + r)^{2+ {\red{\delta}} }}
	, \qquad f_-(0)  \leq \frac{1}{R} - \frac{C_2}{R^{1+ {\red{\delta}} }}
	\;.
\end{align*}
A simple first order ODE comparison then yield $\lambda_{\max} \leq f_+$ and $\lambda_{\min} \geq f_-$, which imply the assertion.
\qed
\end{proof}

\subsection{Unique continuation}

Recall that we aim to show that any solution of \eqref{Eq:20VIII17-1} satisfying
\begin{equation}
\omega \in L^2(M)
	\;.
	\label{Eq:11IX17-OFO}
\end{equation}
must be identically zero.

Without loss of generality, we may assume that the boundary of $M$ is some large coordinate sphere $S_0$ of radius $R_0$ near infinity. Let $r$ denote the $g$-distance function to $S_0$, which, by Lemma \ref{Lem:18IX17-L1}, is a smooth function on the exterior of $S_0$. For $t > 0$, let $S_t$ and $\Omega_{t,\infty}$ denote respectively the set $\{r = t\}$ and $\{r > t\}$.

In the sequel, unless otherwise stated, $C$ will denote some positive constant which varies from line to line, but depends only on $R_0$ and the constant in the asymptotic flatness condition \eqref{Eq:09IX17-A1}.

Note that, by applying elliptic estimates to the PDE \eqref{Eq:08IX17-2} on any Euclidean unit ball in the asymptotic region, it is readily seen that \eqref{Eq:11IX17-OFO} implies
\begin{equation}
|\nabla \omega(x)|_g \leq C\sup_{B(x,1/2)} |\omega|_g \leq C\|\omega\|_{L^2(B(x,1))}  \text{ for } x \in \Omega_{1,\infty}
	\;,
	\label{Eq:11IX17-OFO-DX}
\end{equation}
where the constant $C$ depends only on $a$, $R_0$ and the constant in \eqref{Eq:09IX17-A1}. This implies in particular that
\begin{equation}
\nabla \omega \in L^2(M)
	\;.
	\label{Eq:11IX17-OFO-D}
\end{equation}
%

Define
\begin{align}
X(r)
	&= \frac{1}{(r + R_0)^2} \int_{S_r} |\omega|_g^2\,d\sigma_g
	\;,\label{Eq:XDef}\\
E(r)
	&= \frac{1}{(r + R_0)^2} \int_{\Omega_{r,\infty}} \Big[ |\nabla \omega|_g^2 -   a^2  |\omega|_g^2 + \Ric(\omega^\sharp, \omega^\sharp) \Big]\,dv_g
	\;.\label{Eq:EDef}
\end{align}
Note that $E$ is well defined thanks to \eqref{Eq:11IX17-OFO} and \eqref{Eq:11IX17-OFO-D}. Note also that, by \eqref{Eq:08IX17-2},
\[
E(r) = \frac{1}{2(r + R_0)^2}\int_{\Omega_{r,\infty}} \nabla^i \nabla_i |\omega|_g^2\,dv_g
	= 		- \frac{1}{(r + R_0)^3}\int_{S_r} g(\beta, \omega)\,d\sigma_g
		\;,
\]
where
\[
\beta_j = (r + R_0) \nabla^i r\,\nabla_i \omega_j
	\;.
\]
In addition, there exists $C_2 > 0$ depending only on $R_0$ and the constant in \eqref{Eq:09IX17-A1} such that
\begin{align}
\Big|\frac{d}{dr} X(r) + 2E(r)\Big|
	\leq \frac{  C_2 X(r)}{(r + R_0)^{1 + {\red{\delta}} }}
		\;.\label{Eq:09IX17-A3}
\end{align}

\begin{lemma}\label{Lem:09IX17-L1}
Assume that the asymptotic flatness condition \eqref{Eq:09IX17-A1} and the $L^2$ condition \eqref{Eq:11IX17-OFO} hold. There exist $k_0 > 0$ such that for $k > k_0$ one can find some $r_1 > 0$ so that for $r > r_1$,
\begin{multline}
\frac{d}{dr} \Big[(r + R_0)\,\exp\Big(\frac{k}{(r + R_0)^{\red{\delta}} }\Big)\,E(r)\Big] \leq -\frac{2\exp\Big(\frac{k}{(r + R_0)^{\red{\delta}} }\Big)}{(r + R_0)^{3}}\int_{S_r} |\beta|_g^2\,d\sigma_g\\
	- \frac{a^2\exp\Big(\frac{k}{(r + R_0)^{\red{\delta}} }\Big)}{(r + R_0)^2} \int_{\Omega_{r,\infty}}   |\omega|_g^2 \,dv_g
	\;.
	\label{Eq:09IX17-A2}
\end{multline}
In particular, $E(r) \geq 0$ for  $r > r_1$.
\end{lemma}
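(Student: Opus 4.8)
The plan is to differentiate the weighted quantity $(r+R_0)\,e^{k/(r+R_0)^\delta}E(r)$, reduce \eqref{Eq:09IX17-A2} to a single integral inequality over each sphere $S_r$, and then obtain that inequality from a Pohozaev--Rellich identity produced by testing \eqref{Eq:08IX17-2} against the radial derivative of $\omega$. Write $\rho=r+R_0$ and $P:=|\nabla\omega|_g^2-a^2|\omega|_g^2+\Ric(\omega^\sharp,\omega^\sharp)$, so that $\rho^2 E(r)=\int_{\Omega_{r,\infty}}P\,dv_g$ by \eqref{Eq:08IX17-2}. Since $|\nabla r|_g=1$, the coarea formula gives $\frac{d}{dr}(\rho^2 E)=-\int_{S_r}P\,d\sigma_g$, hence $\rho E'(r)=-\rho^{-1}\!\int_{S_r}P\,d\sigma_g-2E(r)$, and a one-line computation yields
\[
\frac{d}{dr}\Big[\rho\,e^{k/\rho^\delta}E(r)\Big]=e^{k/\rho^\delta}\Big[-\big(1+k\delta\rho^{-\delta}\big)E(r)-\rho^{-1}\!\int_{S_r}P\,d\sigma_g\Big].
\]
Multiplying by $\rho^2 e^{-k/\rho^\delta}>0$ and using $\rho^2 E=\int_{\Omega_{r,\infty}}P$, proving \eqref{Eq:09IX17-A2} becomes equivalent to proving
\[
\rho\int_{S_r}P\,d\sigma_g+\big(1+k\delta\rho^{-\delta}\big)\int_{\Omega_{r,\infty}}P\,dv_g\ \ge\ \frac2\rho\int_{S_r}|\beta|_g^2\,d\sigma_g+a^2\int_{\Omega_{r,\infty}}|\omega|_g^2\,dv_g.
\]

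The core step is to evaluate $\rho\int_{S_r}P\,d\sigma_g$. I would multiply \eqref{Eq:08IX17-2}, written as $\nabla^i\nabla_i\omega_j+a^2\omega_j=R_{ij}\omega^i$, by $\beta^j=\rho\,\nabla^i r\,\nabla_i\omega^j$, integrate over $\Omega_{r,R'}$, integrate by parts and commute covariant derivatives (which produces a curvature term), and let $R'\to\infty$: the boundary term on the far sphere $S_{R'}$ vanishes along a suitable sequence because $\omega,\nabla\omega\in L^2(M)$ by \eqref{Eq:11IX17-OFO}, \eqref{Eq:11IX17-OFO-D}, \eqref{Eq:11IX17-OFO-DX}, while every bulk integrand produced after the integrations by parts lies in $L^1(M)$. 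The boundary term on $S_r$ contributes exactly $\rho^{-1}\int_{S_r}|\beta|_g^2$, $\rho\int_{S_r}|\nabla\omega|_g^2$ and $a^2\rho\int_{S_r}|\omega|_g^2$; the bulk terms involving $\nabla^2 r$ and $\Delta r$ are controlled by Lemma~\ref{Lem:18IX17-L2} (which gives $\nabla^2 r=(\rho^{-1}+O(\rho^{-1-\delta}))h$ and $\Delta r=2\rho^{-1}+O(\rho^{-1-\delta})$), and the curvature terms by $|\Riem|_g=O(\rho^{-2-\delta})$ from \eqref{Eq:09IX17-A1}, cross terms of the form $\rho^{-1-\delta}|\omega|_g|\nabla\omega|_g$ being split by Young's inequality. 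Substituting the resulting expression for $\rho\int_{S_r}|\nabla\omega|_g^2$ into $\int_{S_r}P=\int_{S_r}|\nabla\omega|_g^2-a^2\int_{S_r}|\omega|_g^2+\int_{S_r}\Ric(\omega^\sharp,\omega^\sharp)$, the $a^2\rho\int_{S_r}|\omega|_g^2$ terms cancel and one is left with
\[
\rho\int_{S_r}P\,d\sigma_g=\frac2\rho\int_{S_r}|\beta|_g^2\,d\sigma_g-\int_{\Omega_{r,\infty}}|\nabla\omega|_g^2\,dv_g+3a^2\int_{\Omega_{r,\infty}}|\omega|_g^2\,dv_g+\mathcal E,
\]
with $|\mathcal E|\le C\rho^{-\delta}\big(\int_{\Omega_{r,\infty}}|\nabla\omega|_g^2+\int_{\Omega_{r,\infty}}|\omega|_g^2\big)\,dv_g$; here the one surviving boundary curvature integral $\rho\int_{S_r}\Ric(\omega^\sharp,\omega^\sharp)=O(\rho^{-1-\delta})\int_{S_r}|\omega|_g^2$ is absorbed into $\mathcal E$ after a trace/interior estimate bounding $\int_{S_r}|\omega|_g^2$ by $\|\omega\|_{L^2}^2$ over a shell about $S_r$.

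Inserting this identity together with $\int_{\Omega_{r,\infty}}P=\int_{\Omega_{r,\infty}}|\nabla\omega|_g^2-a^2\int_{\Omega_{r,\infty}}|\omega|_g^2+\int_{\Omega_{r,\infty}}\Ric(\omega^\sharp,\omega^\sharp)$ into the equivalent form of \eqref{Eq:09IX17-A2}, the $\rho^{-1}\int_{S_r}|\beta|_g^2$ terms match, the coefficient of $\int_{\Omega_{r,\infty}}|\nabla\omega|_g^2$ becomes $k\delta\rho^{-\delta}$ and that of $a^2\int_{\Omega_{r,\infty}}|\omega|_g^2$ becomes $1-k\delta\rho^{-\delta}$, and the rest is an error of size $C\rho^{-\delta}\big(\int_{\Omega_{r,\infty}}|\nabla\omega|_g^2+\int_{\Omega_{r,\infty}}|\omega|_g^2\big)$. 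Choosing $k>k_0:=C/\delta$ lets $k\delta\rho^{-\delta}\int_{\Omega_{r,\infty}}|\nabla\omega|_g^2$ dominate the gradient part of the error; then, for $r$ beyond a suitable $r_1=r_1(k)$ one has $k\delta\rho^{-\delta}\le\tfrac12$ and $C\rho^{-\delta}\le\tfrac14 a^2$, so $\tfrac12 a^2\int_{\Omega_{r,\infty}}|\omega|_g^2$ dominates the remainder, giving \eqref{Eq:09IX17-A2}. Finally, \eqref{Eq:09IX17-A2} shows $r\mapsto\rho\,e^{k/\rho^\delta}E(r)$ is non-increasing on $(r_1,\infty)$; since $P\in L^1(M)$ (by $\omega,\nabla\omega\in L^2$ and $|\Ric|_g=O(\rho^{-2-\delta})$), one has $\rho\,e^{k/\rho^\delta}E(r)=\rho^{-1}e^{k/\rho^\delta}\int_{\Omega_{r,\infty}}P\,dv_g\to0$ as $r\to\infty$, and a non-increasing function tending to $0$ at $+\infty$ is everywhere $\ge0$; hence $E(r)\ge0$ for $r>r_1$.

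The step I expect to be the main obstacle is the Pohozaev--Rellich identity: keeping the signs straight given that $-\nabla r$ (not $\nabla r$) is the outward normal of $\Omega_{r,\infty}$ along $S_r$, correctly bookkeeping the $O(\rho^{-1-\delta})$ geometric errors coming from the non-roundness of the level sets and the curvature commutator, justifying the vanishing of the boundary term at infinity, and — most importantly — verifying that after all the cancellations the quantity the exponential weight renders negative is precisely $\int_{\Omega_{r,\infty}}|\nabla\omega|_g^2$, with no uncontrolled positive leftover, together with the reabsorption of the residual boundary curvature integral over $S_r$.
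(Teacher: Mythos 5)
Your proposal is correct and follows essentially the same path as the paper's proof: compute the logarithmic derivative of the weighted quantity $(r+R_0)e^{k/(r+R_0)^\delta}E(r)$, and close the resulting sphere inequality by pairing \eqref{Eq:08IX17-2} with the radial multiplier $\beta^j$ and integrating by parts (with the $O(\rho^{-\delta})$ error terms controlled by Lemma~\ref{Lem:18IX17-L2} and \eqref{Eq:09IX17-A1}); your final identity $\rho\int_{S_r}P=\frac{2}{\rho}\int_{S_r}|\beta|^2-\int_{\Omega_{r,\infty}}|\nabla\omega|^2+3a^2\int_{\Omega_{r,\infty}}|\omega|^2+\mathcal E$ matches the paper's after rearrangement. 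The only cosmetic differences are that the paper uses a cut-off $\zeta_m$ rather than a bounded annulus with $R'\to\infty$, and it converts the boundary curvature integral into a bulk integral directly by the divergence theorem rather than via a trace estimate (also note the $S_r$ boundary terms from the second integration by parts actually carry coefficient $\tfrac{\rho}{2}$, not $\rho$, in your intermediate narration, though your stated final identity has the correct coefficients).
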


\begin{proof} For $m \gg R_0$, let $\zeta_m$ be a cut-off function such that $0 \leq \zeta_m \leq 1$ in $M$, $\zeta_m \equiv 1$ in $\{r < m\}$, $\zeta_m \equiv 0$ in $\{r > 2m\}$, and $|\nabla \zeta_m|_g \leq \frac{C}{m}$ in $M$, where here and below $C$ denotes some constant which is independent of $m$.

Keeping in mind the asymptotic flatness and the Hessian estimate Lemma \ref{Lem:18IX17-L2}, we compute for $0 < s \ll m$,
\begin{align*}
 \int_{\Omega_{s,\infty}} \nabla^i \nabla_i \omega_j\,\beta^j\,\zeta_m\,dv_g
	&= \int_{\Omega_{s,\infty}} (r + R_0)\, \nabla^i \nabla_i \omega_j\, \nabla^k r\,\nabla_k \omega^j\,\zeta_m\,dv_g\\
	&\leq -\int_{\Omega_{s,\infty}}  \nabla_i \omega_j\,\Big[\nabla^i ((r + R_0)\,\nabla^k r)\,\nabla_k \omega^j + (r + R_0) \nabla^k r\,\nabla^i \nabla_k \omega^j\Big]\,\zeta_m\,dv_g\\
		&\qquad - \frac{1}{s + R_0} \int_{S_s} |\beta|_g^2\,d\sigma_g
			+ C \int_{\Omega_{m,\infty}} |\nabla \omega|_g^2 \,dv_g\\
	&\leq -\int_{\Omega_{s,\infty}} \Big[ 1 - \frac{C}{(r + R_0)^{{\red{\delta}} }} \Big]|\nabla \omega|_g^2\,\zeta_m dv_g\\
		&\qquad -\int_{\Omega_{s,\infty}}  (r + R_0)\Big[\frac{1}{2}\nabla^k r\, \nabla_k |\nabla \omega|_g^2   - \nabla^k r\,\nabla^i \omega^j\,R_{kijl}\omega^l\Big]\,\zeta_m\,dv_g\\
		&\qquad - \frac{1}{s + R_0} \int_{S_s} |\beta|_g^2\,d\sigma_g
					+ C \int_{\Omega_{m,\infty}} |\nabla \omega|_g^2 \,dv_g\\
	&\leq \int_{\Omega_{s,\infty}} \Big[\frac{1}{2} |\nabla \omega|_g^2 + \frac{C}{(r + R_0)^{{\red{\delta}} }}(|\nabla \omega|_g +  | \omega|_g) |\nabla \omega|_g \Big]\,\zeta_m dv_g\\
		&\qquad - \frac{1}{s + R_0} \int_{S_s} |\beta|_g^2\,d\sigma_g
			+ \frac{1}{2} (s + R_0)  \int_{S_s} |\nabla\omega|_g^2\,d\sigma_g
						+ C \int_{\Omega_{m,\infty}} |\nabla \omega|_g^2 \,dv_g
	\;.
\end{align*}

On the other hand, by the PDE \eqref{Eq:08IX17-2} and the Hessian estimate Lemma \ref{Lem:18IX17-L2},
\begin{align*}
 \int_{\Omega_{s,\infty}} \nabla^i \nabla_i \omega_j\,\beta^j\,\zeta_m\,dv_g
	&= \int_{\Omega_{s,\infty}} \Big[- \frac{1}{2} a^2 (r + R_0) \nabla^k r \nabla_k |\omega|_g^2 + (r + R_0) R_{ij} \omega^i \, \nabla^k r\,\nabla_k \omega_j \Big]\,\zeta_m dv_g\\
	&\geq \int_{\Omega_{s,\infty}} \Big[ \frac{3}{2} a^2 |\omega|_g^2 - \frac{C}{(r + R_0)^{{\red{\delta}} }} |\omega|_g( |\nabla \omega|_g + |\omega|_g )\Big]   \,dv_g\\
		&\qquad + \frac{1}{2} (s + R_0)  \int_{S_s} a^2  |\omega|_g^2 \,d\sigma_g
					- C\int_{\Omega_{m,\infty}} |\omega|_g^2\,dv_g
			\;.
\end{align*}

In addition, we have
\begin{align*}
 \int_{S_s} \Ric(\omega^\sharp, \omega^\sharp)\,d\sigma_g
 	&\geq \int_{S_s} \frac{C}{(r + R_0)^{2 + {\red{\delta}} }} |\omega|_g^2\,d\sigma_g\\
	&= \int_{\Omega_{s,\infty}} \nabla^i \Big(\frac{C}{(r + R_0)^{2 + {\red{\delta}} }} |\omega|_g^2\,\zeta_m\,\nabla_i r\Big)\,dv_g\\
	&\geq  -\int_{\Omega_{s,\infty}} \frac{C}{(r + R_0)^{\red{\delta}} } |\omega|_g( |\nabla \omega|_g + |\omega|_g )\,dv_g
 \,.
\end{align*}

Combining the last three estimates and sending $m \rightarrow \infty$ (using \eqref{Eq:11IX17-OFO} and \eqref{Eq:11IX17-OFO-D}), we can find some $C_1 > 0$ depending only on $R_0$ and the constant in \eqref{Eq:09IX17-A1} such that
\begin{align*}
& \int_{S_s} \Big[|\nabla\omega|_g^2\, -    a^2  |\omega|_g^2 + \Ric(\omega^\sharp, \omega^\sharp)\Big]d\sigma_g  \\
	&\qquad \geq \frac{2}{(s + R_0)^2} \int_{S_s} |\beta|_g^2\,d\sigma_g
			- (s + R_0) E(s) \\
		&\qquad\qquad + \frac{1}{s + R_0}\int_{\Omega_{s,\infty}} \Big[2a^2  |\omega|_g^2 - \frac{C_1}{(r + R_0)^{\red{\delta}} } ( |\nabla \omega|_g^2  + |\omega|_g^2)\Big]\,dv_g
			\;.
\end{align*}
It follows that, for all sufficiently large $s$,
\begin{align*}
&(r + R_0) \exp\Big(-\frac{k}{(r + R_0)^{{\red{\delta}} }}\Big)\frac{d}{dr}  \Big[(r + R_0)\,\exp\Big(\frac{k}{(r + R_0)^{\red{\delta}} }\Big)\,E(r)\Big]\Big|_{r = s} \\
	&\qquad = - (s + R_0)\Big(1 + \frac{k{\red{\delta}} }{(s + R_0)^{{\red{\delta}} }}\Big) E(s)
		- \int_{S_s}  \Big[|\nabla\omega|_g^2\, -    a^2  |\omega|_g^2 + \Ric(\omega^\sharp, \omega^\sharp)\Big]d\sigma_g  \\
	&\qquad \leq -\frac{2}{(s + R_0)^2} \int_{S_s} |\beta|_g^2\,d\sigma_g
		\\
		&\qquad\qquad  -\frac{1}{s + R_0}\int_{\Omega_{s,\infty}}  \Big[\frac{k{\red{\delta}} }{(s + R_0)^{\red{\delta}} }|\nabla \omega|_g^2 + \frac{2(s + R_0)^{\red{\delta}}  - k{\red{\delta}} }{(s + R_0)^{\red{\delta}} } a^2   |\omega|_g^2\Big]\,dv_g\\
		&\qquad\qquad + \frac{1}{s+ R_0}\int_{\Omega_{s,\infty}}  \frac{1}{(r + R_0)^{\red{\delta}} } \Big[C_1 |\nabla \omega|_g^2  +  (C + C_1) |\omega|_g^2\Big]\,dv_g
		\;.
\end{align*}
The conclusion is readily seen.
\qed
\end{proof}

\bigskip
Let
\begin{equation}
F(r) = \frac{(r + R_0)\exp\big(\frac{2k}{(r + R_0)^\delta}\big)E(r)}{X(r)},
	\label{Eq:FreFcDef}
\end{equation}
wherever $X(r) > 0$. In view of \eqref{Eq:09IX17-A3}, $F$ is readily seen as a perturbation of $\frac{-r X'}{2X}$, and so bears some resemblance to the Almgren frequency function \cite{Almgren-1979} for harmonic functions. The following lemma establishes the monotonicity of $F$.

\begin{lemma}\label{Lem:09IX17-L2}
Assume that the asymptotic flatness condition \eqref{Eq:09IX17-A1} and the $L^2$ condition \eqref{Eq:11IX17-OFO} hold. There exist $k > 0$ and $r_1 > 0$ such that
\begin{equation}
\frac{d}{dr} F(r)
\leq - \frac{a^2\exp\Big(\frac{2k}{(r + R_0)^{\red{\delta}} }\Big)}{(r + R_0)^2 X(r)}\int_{\Omega_{r,\infty}} |\omega|_g^2\,dv_g
	\label{Eq:14IX17-E1}
\end{equation}
for all $r > r_1$ satisfying $X(r) > 0$. In particular, we have the dichotomy
\begin{itemize}
\item either $\omega \equiv 0$ in $M$,
\item or $X(r) > 0$ for all $r > r_1$ and the Almgren-type frequency function $F(r)$ is non-increasing for $r \geq r_1$.
\end{itemize}
\end{lemma}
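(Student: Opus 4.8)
The plan is to carry out the standard Almgren frequency-function computation, treating $F$ as a perturbation of $-rX'/2X$. Write $b(r):=r+R_0$ and $N(r):=b(r)\exp\!\big(\tfrac{2k}{b(r)^\delta}\big)E(r)$, so that $F=N/X$ and, wherever $X(r)>0$,
\[
 F'(r)=\frac{N'(r)}{X(r)}-F(r)\,\frac{X'(r)}{X(r)}\,;
\]
the task is then to estimate the two terms on the right. The ingredients are all available: the identity $E(r)=-b(r)^{-3}\int_{S_r}g(\beta,\omega)\,d\sigma_g$ with $\beta_j=b(r)\,\nabla^i r\,\nabla_i\omega_j$; the first-order relation \eqref{Eq:09IX17-A3} tying $X'$ to $E$; and the differential inequality of Lemma~\ref{Lem:09IX17-L1}, together with its byproduct $E\ge0$ on $(r_1,\infty)$, which makes $N\ge0$ and hence $F\ge0$. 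First I would peel off the extra weight by writing $N=\exp(\tfrac{k}{b^\delta})M$, where $M(r)=b(r)\exp(\tfrac{k}{b(r)^\delta})E(r)$ is precisely the quantity controlled in Lemma~\ref{Lem:09IX17-L1}; differentiating the weight produces the manifestly nonpositive term $-k\delta\,b^{-1-\delta}N$ (this is where $E\ge0$ enters), so that Lemma~\ref{Lem:09IX17-L1} gives
\[
 \frac{N'}{X}\le-\frac{k\delta\,F}{b^{1+\delta}}-\frac{2\exp(\tfrac{2k}{b^\delta})}{b^{3}X}\int_{S_r}|\beta|_g^2\,d\sigma_g-\frac{a^2\exp(\tfrac{2k}{b^\delta})}{b^{2}X}\int_{\Omega_{r,\infty}}|\omega|_g^2\,dv_g\,.
\]

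The core of the argument is the Cauchy--Schwarz (Rellich-type) inequality $E(r)^2\le b(r)^{-4}X(r)\int_{S_r}|\beta|_g^2\,d\sigma_g$, which follows at once from $E(r)=-b(r)^{-3}\int_{S_r}g(\beta,\omega)\,d\sigma_g$; it shows that the $\beta$-integral in the display above is at most $-2b\,E^2\exp(\tfrac{2k}{b^\delta})/X^2$. On the other hand \eqref{Eq:09IX17-A3} gives $-X'\le 2E+C_2\,b^{-1-\delta}X$, hence, using $F\ge0$,
\[
 -F\,\frac{X'}{X}\le\frac{2EF}{X}+\frac{C_2F}{b^{1+\delta}}=\frac{2b\,E^2\exp(\tfrac{2k}{b^\delta})}{X^2}+\frac{C_2F}{b^{1+\delta}}\,.
\]
Adding the last two displays, the two $E^2/X^2$ contributions cancel exactly --- this is why the coefficient $2$ of $\int_{S_r}|\beta|^2$ in Lemma~\ref{Lem:09IX17-L1} has to match the factor $2$ in the bound for $-X'$ --- leaving
\[
 F'(r)\le\frac{(C_2-k\delta)\,F(r)}{b(r)^{1+\delta}}-\frac{a^2\exp(\tfrac{2k}{b(r)^\delta})}{b(r)^{2}\,X(r)}\int_{\Omega_{r,\infty}}|\omega|_g^2\,dv_g\,.
\]
Enlarging $k$ so that $k\delta\ge C_2$ (and so that Lemma~\ref{Lem:09IX17-L1} applies) makes the first term on the right nonpositive, which is exactly \eqref{Eq:14IX17-E1}.

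It remains to establish the dichotomy. If $X(r_\ast)=0$ for some $r_\ast>r_1$ then $\omega$ vanishes on the sphere $S_{r_\ast}$, whence $E(r_\ast)=0$; since $M$ is nonnegative on $(r_1,\infty)$ and, by Lemma~\ref{Lem:09IX17-L1}, nonincreasing there, it must vanish identically on $[r_\ast,\infty)$, and feeding $M'\equiv0$ back into the inequality of Lemma~\ref{Lem:09IX17-L1} forces $\int_{\Omega_{r,\infty}}|\omega|_g^2\,dv_g=0$ for every $r\ge r_\ast$; thus $\omega$ vanishes on an open neighbourhood of infinity, and unique continuation for the elliptic equation \eqref{Eq:08IX17-2} (Aronszajn) then gives $\omega\equiv0$ on the connected manifold $M$. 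Otherwise $X>0$ throughout $(r_1,\infty)$, in which case $F$ is a well-defined $C^1$ function there and, the right-hand side of \eqref{Eq:14IX17-E1} being $\le0$, $F$ is nonincreasing. I expect the main obstacle to be not conceptual but the bookkeeping in the two central displays: one must check that every error term generated by the asymptotic-flatness perturbations is either of the form $(\textrm{const})\,b^{-1-\delta}F$ --- absorbable by the weight-derivative term $-k\delta\,b^{-1-\delta}F$ once $k$ is large, which hinges on $E\ge0$ giving that term the right sign --- or is already dominated by the good negative terms provided by Lemma~\ref{Lem:09IX17-L1}.
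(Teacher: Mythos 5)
Your derivation of \eqref{Eq:14IX17-E1} is, up to a cosmetic rearrangement of the algebra, the same as the paper's: both split $F'$ by the quotient rule, feed Lemma~\ref{Lem:09IX17-L1} into the $E$-derivative and \eqref{Eq:09IX17-A3} into the $X$-derivative, then cancel via Cauchy--Schwarz on $S_r$ (you phrase it as $E(r)^2\le(r+R_0)^{-4}X(r)\int_{S_r}|\beta|_g^2\,d\sigma_g$, the paper as $\int_{S_r}|\beta|^2\int_{S_r}|\omega|^2-(\int_{S_r}g(\beta,\omega))^2\ge 0$, which is the same inequality after unwinding the definitions of $E$ and $X$), and finally discard the term carrying $k\delta-C_2$ using $F\ge 0$. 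Incidentally, your threshold $k\delta\ge C_2$ is the correct one; the paper writes $k>\max(k_0,C_2)$, a slip when $\delta<1$.

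Your dichotomy argument, however, is genuinely different from the paper's and notably more economical. The paper argues by contradiction on a connected component $(r_-,r_+)$ of $\{X>0\}$: it introduces $F_+=\lim_{r\to r_+}F(r)$, uses the just-proved monotonicity of $F$ to get an ODE comparison for $X$, and contradicts $X(r_+)=0$. You instead bypass the monotonicity of $F$ entirely: from $X(r_*)=0$ you read off $\omega|_{S_{r_*}}=0$, hence $E(r_*)=0$, hence $M(r):=(r+R_0)\exp\bigl(k(r+R_0)^{-\delta}\bigr)E(r)$ vanishes at $r_*$; Lemma~\ref{Lem:09IX17-L1} makes $M$ nonnegative \emph{and} nonincreasing on $(r_1,\infty)$, so $M\equiv 0$ on $[r_*,\infty)$, and feeding $M'\equiv 0$ back into the same lemma kills $\int_{\Omega_{r,\infty}}|\omega|^2_g\,dv_g$; Aronszajn-type unique continuation for \eqref{Eq:08IX17-2} finishes. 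What this buys is a shorter proof resting only on the properties of $M$ already established in Lemma~\ref{Lem:09IX17-L1} --- no analysis of the structure of the zero set of $X$, no limit $F_+$, no ODE comparison --- while the paper's route foregrounds the frequency function itself. Both arguments are valid; yours is the cleaner one.
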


\begin{proof} Let $k_0$ be as in Lemma \ref{Lem:09IX17-L1}. Recall the estimate \eqref{Eq:09IX17-A3}:
\[
\Big|\frac{d}{dr} X(r) + 2E(r)\Big|
	\leq \frac{  C_2 X(r)}{(r + R_0)^{1 + {\red{\delta}} }}
		\;.
\]
%
Fix some $k > \max(k_0, C_2)$. By \eqref{Eq:09IX17-A2} and \eqref{Eq:09IX17-A3}, we have for large $r$ that
\begin{align*}
&\frac{X^2(r)\exp\Big(-\frac{2k}{(r + R_0)^{\red{\delta}} }\Big)}{r + R_0} \frac{d}{dr} F(r)
\\
	&\qquad= \frac{X^2(r)\exp\Big(-\frac{2k}{(r + R_0)^{\red{\delta}} }\Big)}{r + R_0} \frac{d}{dr} \frac{(r + R_0) \exp\Big(\frac{k}{(r + R_0)^{\red{\delta}} }\Big) E(r)}{\exp\Big(-\frac{k}{(r + R_0)^{\red{\delta}} }\Big)X(r)} \\
	&\qquad\leq -\frac{1}{(r + R_0)^{6}}  \Big(2\int_{S_r} |\beta|_g^2\,d\sigma_g + a^2 (r + R_0)\int_{\Omega_{r,\infty}} |\omega|_g^2\,dv_g\Big)\, \int_{S_r} |\omega|_g^2\,d\sigma_g\\
		&\qquad\qquad - E(r)\,\Big(\frac{d}{dr} X(r) + \frac{k{\red{\delta}} }{(r + R_0)^{1 + {\red{\delta}} }} X\Big)\\
	&\qquad\leq -\frac{1}{(r + R_0)^{6}}  \Big(2\int_{S_r} |\beta|_g^2\,d\sigma_g + a^2 (r + R_0)\int_{\Omega_{r,\infty}} |\omega|_g^2\,dv_g\Big)\, \int_{S_r} |\omega|_g^2\,d\sigma_g\\
		&\qquad\qquad - E(r)\,\Big(-2E(r) + \frac{ k{\red{\delta}}  - C_2}{(r + R_0)^{1+{\red{\delta}} }} X(r)\Big)\\
	&\qquad= -\frac{2}{(r + R_0)^{6}} \Big\{ \int_{S_r} |\beta|_g^2\,d\sigma_g\, \int_{S_r} |\omega|_g^2\,d\sigma_g - \Big(\int_{S_r} g(\beta,\omega)\,dv_g\Big)^2\Big\}\\
		&\qquad\qquad - \frac{a^2 X(r)}{(r + R_0)^3} \int_{\Omega_{r,\infty}} |\omega|_g^2\,dv_g
			 - \frac{k{\red{\delta}}  - C_2}{(r + R_0)^{1+{\red{\delta}} }} E(r)\,X(r)
		\;.
\end{align*}
As $k > C_2$ and $E(r) \geq 0$ for large $r$ (thanks to Lemma \ref{Lem:09IX17-L1}), estimate \eqref{Eq:14IX17-E1} follows from Cauchy-Schwarz' inequality.

We turn to the proof of the stated dichotomy. Assume by contradiction that $\omega \not\equiv 0$ in $M$ but $X(r_2) = 0$ for some $r_2 > r_1$. By unique continuation, $X$ cannot be identically zero in a non-empty open subinterval of $(r_1,r_2)$. Thus, the set $\{r \in (r_1,r_2): X(r) \neq 0\}$ is a union of pairwise disjoint open subintervals of $(r_1, r_2)$. Let $(r_-, r_+)$ be a connected component of this set, so that $X(r_+) = 0$. ($X(r_-)$ is also zero unless $r_- = r_1$, but we will not need this fact.) In this interval, the function $F$ is well-defined and is non-increasing thanks to \eqref{Eq:14IX17-E1}.

Define
\[
F_+ = \lim_{r \rightarrow r_+} F(r) \geq 0
	\;.
\]
Then, for $r \in (r_-,r_+)$, we have $F(r) \geq F_+$ and so
\[
E(r) \geq \frac{F_+}{r + R_0}\,\exp\Big(-\frac{2k}{(r + R_0)^{\red{\delta}} }\Big)\,X(r) \geq  \frac{F_+}{r + R_0}\,\Big(1 - \frac{2k}{(r + R_0)^{\red{\delta}} }\Big)\,X(r)
	\;.
\]
Recalling \eqref{Eq:09IX17-A3}, we have
\[
\frac{d}{dr} X(r) \leq - 2E(r) + \frac{C_2 X(r)}{(r + R_0)^{1 + {\red{\delta}} }} \leq \Big[-\frac{2F_+}{r + R_0} + \frac{C_2 + 4kF_+}{(r + R_0)^{1 + {\red{\delta}} }}\Big]\,X(r)
	\;,
\]
which is equivalent to
\[
\frac{d}{dr} \Big[(r + R_0)^{2F_+} \exp \Big(\frac{C_2 + 4kF_+}{{\red{\delta}} (r + R_0)^{{\red{\delta}} }}\Big)X(r)\Big] \leq 0
	\text{ for }r \in (r_-,r_+)
		\;,
\]
But this is impossible as $X(r_+) = 0$ and $X(r) > 0$ for $r \in (r_-,r_+)$. This contradiction proves the desired dichotomy.
\qed
\end{proof}

Assume that $\omega \not\equiv 0$. Then $X(r) > 0$ for large $r$. Let $F$ be the Almgren frequency function defined by \eqref{Eq:FreFcDef}
 and set
\begin{equation}
F(\infty) := \lim_{r \rightarrow \infty} F(r)
	\;.\label{Eq:11IX17-R2}
\end{equation}

The next two statements show that, roughly speaking, $X$ decays like $r^{-2F(\infty)}$.

\begin{corollary} \label{Cor:11IX17-C2}
Assume that the asymptotic flatness condition \eqref{Eq:09IX17-A1} and the $L^2$ condition \eqref{Eq:11IX17-OFO} hold and that $\omega \not\equiv 0$ in $M$. Then, there exists some $C > 0$ such that
\[
X(r) \leq C (r + R_0)^{-2F(\infty)} \text{ for } r > 0
	\;.
\]
%
%
\end{corollary}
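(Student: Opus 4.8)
The plan is to convert the monotonicity of the Almgren-type frequency function $F$ furnished by Lemma~\ref{Lem:09IX17-L2} into a closed linear differential inequality for $X$ alone, using the first-order identity \eqref{Eq:09IX17-A3}. Since $\omega \not\equiv 0$, the dichotomy in Lemma~\ref{Lem:09IX17-L2} tells us that $X(r) > 0$ for all $r > r_1$ and that $F$ is non-increasing on $[r_1,\infty)$; moreover $F(r) \ge 0$ there, because $E(r) \ge 0$ for large $r$ by Lemma~\ref{Lem:09IX17-L1} while $X(r) > 0$. Hence the limit $F(\infty)$ in \eqref{Eq:11IX17-R2} exists, is finite, is non-negative, and $F(r) \ge F(\infty)$ for every $r > r_1$.

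Unwinding the definition \eqref{Eq:FreFcDef} of $F$ gives, for $r > r_1$,
\[
E(r) = \frac{F(r)\,\exp\big(-\tfrac{2k}{(r + R_0)^\delta}\big)}{r + R_0}\,X(r) \ \ge\ \frac{F(\infty)}{r + R_0}\Big(1 - \frac{2k}{(r + R_0)^\delta}\Big) X(r)\,,
\]
since $F(\infty) \ge 0$. Substituting this lower bound for $E$ into \eqref{Eq:09IX17-A3}, exactly as in the last display of the proof of Lemma~\ref{Lem:09IX17-L2} but with $F_+$ replaced by $F(\infty)$, yields
\[
\frac{d}{dr} X(r) \le -2E(r) + \frac{C_2\,X(r)}{(r + R_0)^{1+\delta}} \le \Big[-\frac{2F(\infty)}{r + R_0} + \frac{C_2 + 4kF(\infty)}{(r + R_0)^{1+\delta}}\Big]X(r)
\]
for $r$ larger than some $r_1$. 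Multiplying by the integrating factor $\mu(r) := (r + R_0)^{2F(\infty)}\exp\big(\tfrac{C_2 + 4kF(\infty)}{\delta(r + R_0)^\delta}\big)$ shows that $\mu(r) X(r)$ is non-increasing on $[r_1,\infty)$, hence bounded above by $\mu(r_1) X(r_1)$; since the exponential factor in $\mu$ is $\ge 1$ there (the constant in the exponent is positive), we conclude $X(r) \le \mu(r_1) X(r_1)\,(r + R_0)^{-2F(\infty)}$ for $r \ge r_1$.

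To reach the range $r > 0$ of the statement, note that on the compact collar $\{0 \le r \le r_1\}$ the function $X$ is bounded — for instance by the interior elliptic estimate \eqref{Eq:11IX17-OFO-DX} together with the bound $\sigma_g(S_r) \le C(r + R_0)^2$, or simply by continuity and finiteness of the defining integral — while $(r + R_0)^{-2F(\infty)} \ge (r_1 + R_0)^{-2F(\infty)} > 0$ there because $F(\infty) \ge 0$. Enlarging the constant $C$ then gives $X(r) \le C(r + R_0)^{-2F(\infty)}$ for all $r > 0$. The only points requiring care are bookkeeping ones: that $F(\infty)$ is finite and non-negative (both follow from $E \ge 0$ and the already-established monotonicity), and that the passage from the frequency bound to the ODE for $X$ reuses verbatim the computation inside Lemma~\ref{Lem:09IX17-L2}, so no genuinely new estimate is needed; in that sense this corollary is a direct consequence of the monotonicity of $F$.
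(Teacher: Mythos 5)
Your proof is correct and follows essentially the same route as the paper's: restate the differential inequality from the second half of the proof of Lemma~\ref{Lem:09IX17-L2} with $F_+$ replaced by $F(\infty)$, integrate the resulting linear ODE inequality, and absorb constants. The paper's version is terser (it simply refers back to Lemma~\ref{Lem:09IX17-L2}), while you spell out the auxiliary facts $F(\infty)\ge 0$ and the harmless extension from $r\ge r_1$ to $r>0$, but no new idea is introduced on either side.
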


\begin{proof} By Lemma \ref{Lem:09IX17-L2}, there exists some $r_1 > 0$ such that $F(r) \geq F(\infty)$ for all $r > r_1$. We can then follow the argument in the second half of the proof of Lemma \ref{Lem:09IX17-L2} (with $F_+$ replaced by $F(\infty)$) to show that
%
\[
\frac{d}{dr} \Big[(r + R_0)^{2F(\infty)} \exp \Big(\frac{C_2 + 4kF(\infty)}{{\red{\delta}} (r + R_0)^{\red{\delta}} }\Big)X(r)\Big] \leq 0
	\text{ for } r > r_1
		\;.
\]
The conclusion follows.
\qed
\end{proof}

\begin{corollary}
  \label{Cor:09IX17-CX}
Assume that the asymptotic flatness condition \eqref{Eq:09IX17-A1} and the $L^2$ condition \eqref{Eq:11IX17-OFO} hold and that $\omega \not\equiv 0$ in $M$. Then, for any $\gamma > 2F(\infty)$,
\[
\liminf_{r \rightarrow \infty} r^\gamma\,X(r) = \infty
	\;.
\]
\end{corollary}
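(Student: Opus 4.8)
The plan is to run the Almgren/Gr\"onwall argument of Lemma~\ref{Lem:09IX17-L2} in reverse: there one used a \emph{lower} bound on $F$ to force fast decay of $X$ and obtain a contradiction; here one uses an \emph{upper} bound on $F$, supplied for free by the monotonicity and convergence of $F$, to force a matching \emph{lower} bound on $X$, and then reads off the claim.

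First I would record what $\omega\not\equiv 0$ gives. By the dichotomy in Lemma~\ref{Lem:09IX17-L2}, $X(r)>0$ for all $r>r_1$ and $F$ is non-increasing on $[r_1,\infty)$, hence $F(r)\downarrow F(\infty)\ge 0$ (nonnegativity of $F$ near infinity follows from $E(r)\ge 0$, $X(r)>0$ and positivity of the remaining factors in \eqref{Eq:FreFcDef}). Given $\gamma>2F(\infty)$, set $\varepsilon:=\tfrac{1}{4}(\gamma-2F(\infty))>0$ and fix some large $r_2\ge r_1$ such that, for all $r\ge r_2$, one has $E(r)\ge 0$ (possible by Lemma~\ref{Lem:09IX17-L1}) and $F(r)\le F(\infty)+\varepsilon$ (possible since $F(r)\to F(\infty)$).

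Next I would convert this into a differential inequality for $X$. From the definition \eqref{Eq:FreFcDef} of $F$, using $\exp\!\big(2k/(r+R_0)^\delta\big)\ge 1$ together with $E(r)\ge 0$ and $F(r)\le F(\infty)+\varepsilon$, we get $0\le E(r)\le (F(\infty)+\varepsilon)(r+R_0)^{-1}X(r)$ for $r\ge r_2$. Inserting this into the lower half of the perturbed identity \eqref{Eq:09IX17-A3} yields
\[
\frac{d}{dr}X(r)\ \ge\ -2E(r)-\frac{C_2\,X(r)}{(r+R_0)^{1+\delta}}\ \ge\ -\Big[\frac{2(F(\infty)+\varepsilon)}{r+R_0}+\frac{C_2}{(r+R_0)^{1+\delta}}\Big]X(r),
\]
which, since $X>0$ on $[r_2,\infty)$, is equivalent to the statement that $(r+R_0)^{2(F(\infty)+\varepsilon)}\exp\!\big(-\tfrac{C_2}{\delta(r+R_0)^\delta}\big)\,X(r)$ is non-decreasing on $[r_2,\infty)$. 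Evaluating this monotone quantity at $r_2$, where it is strictly positive, gives $X(r)\ge c_0\,(r+R_0)^{-2(F(\infty)+\varepsilon)}$ for all $r\ge r_2$ and some $c_0>0$.

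Finally I would conclude: since $\gamma-2(F(\infty)+\varepsilon)=\tfrac{1}{2}(\gamma-2F(\infty))>0$, we obtain $r^\gamma X(r)\ge c_0\,r^\gamma(r+R_0)^{-2(F(\infty)+\varepsilon)}\to\infty$ as $r\to\infty$; thus in fact $\lim_{r\to\infty}r^\gamma X(r)=\infty$, which is stronger than the asserted $\liminf=\infty$. I do not expect a genuine obstacle here: every ingredient is already available (monotonicity and limit of $F$ from Lemma~\ref{Lem:09IX17-L2}, $E\ge 0$ near infinity from Lemma~\ref{Lem:09IX17-L1}, the estimate \eqref{Eq:09IX17-A3}, and the upper bound $X\le C(r+R_0)^{-2F(\infty)}$ from Corollary~\ref{Cor:11IX17-C2} as the complementary bound). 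The only point that needs care is orienting all the inequalities consistently so that the auxiliary weighted quantity comes out \emph{non-decreasing} rather than non-increasing, and choosing $\varepsilon$ strictly smaller than $\tfrac12(\gamma-2F(\infty))$ so that the final power of $r$ is positive.
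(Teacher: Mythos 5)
Your proof is correct and rests on the same key monotonicity that the paper uses: from \eqref{Eq:09IX17-A3} and an upper bound $F(r)\le\mu$ (with $2F(\infty)\le 2\mu<\gamma$) valid for large $r$, the quantity $X(r)\,(r+R_0)^{2\mu}\exp\bigl(-\tfrac{C_2}{\delta(r+R_0)^\delta}\bigr)$ is non-decreasing. The only difference is presentational: the paper runs the argument by contradiction (reducing to $\liminf r^\gamma X(r)=0$, concluding the monotone weighted quantity tends to zero along a sequence, hence vanishes, hence $X\equiv0$ on a tail), whereas you read off the polynomial lower bound $X(r)\ge c_0(r+R_0)^{-2\mu}$ directly from the monotone quantity evaluated at a base point and conclude $\lim_{r\to\infty} r^\gamma X(r)=\infty$, which is equivalent to the asserted $\liminf=\infty$; this is a mild streamlining, not a different route.
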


\begin{proof} Assume by contradiction that $\liminf_{r \rightarrow \infty} r^\gamma\,X(r) < \infty$. Replacing $\gamma$ by a smaller number, which is still larger than $2F(\infty)$,
if necessary, we may assume without loss of generality that $\liminf_{r \rightarrow \infty} r^\gamma\,X(r) = 0$.

Let $k$ and $r_1$ be as in Lemma \ref{Lem:09IX17-L2} and fix some $s > r_1$ such that $2F(s) < \gamma$. Then $F(r) \leq F(s) < \frac{1}{2}\gamma$ for all $r \geq s$. In view of  \eqref{Eq:09IX17-A3}, we have for $r \geq s$ that
\[
\frac{X'(r)}{X(r)} \geq -\frac{2\exp\Big(-\frac{2k}{(r + R_0)^{\red{\delta}} }\Big)}{r + R_0} F(r) - \frac{C_2}{(r + R_0)^{1 + {\red{\delta}} }} \geq - \frac{2}{r + R_0} F(s) -  \frac{C_2}{(r + R_0)^{1 + {\red{\delta}} }}
	\;,
\]
and so
\[
\frac{d}{dt} \Big[ X(r)\,(r + R_0)^{2F(s)} \exp\Big(- \frac{C_2}{{\red{\delta}} (r + R_0)^{\red{\delta}} }\Big)\Big] \geq 0
	\;.
\]
As $2F(s) < \gamma$, we have by assumption that the function in the square bracket tends to zero along a sequence $r_i \rightarrow \infty$. It follows that $X(r) = 0$ for all $r > s$, which contradicts the fact that $X(r) > 0$ for large $r$. The proof is complete.
\qed
\end{proof}

We next show that, under the $L^2$ assumption on $\omega$, $X$ must decay faster than $r^{-3}$.

\begin{corollary}\label{Cor:14IX17-C1}
Assume that the asymptotic flatness condition \eqref{Eq:09IX17-A1} and the $L^2$ condition \eqref{Eq:11IX17-OFO} hold and that $\omega \not\equiv 0$ in $M$. Then $F(\infty) \geq \frac{3}{2}$.
\end{corollary}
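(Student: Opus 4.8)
The plan is to argue by contradiction: assuming $F(\infty) < \frac{3}{2}$, I would extract from Corollary~\ref{Cor:09IX17-CX} a genuine pointwise lower bound for $X$ near infinity and show that it is incompatible with the $L^2$ hypothesis on $\omega$, once the latter is rewritten as an integrability condition on $X$.

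First I would record the radial form of the $L^2$ bound. Since $r = d_{R_0}$ is a distance function, Lemma~\ref{Lem:18IX17-L1} shows it is smooth on $\Omega_{R_0,\infty}$ with $|\nabla r|_g \equiv 1$, so the coarea formula gives, for every $\rho > R_0$,
\[
\|\omega\|_{L^2(M)}^2 \geq \int_{\Omega_{\rho,\infty}} |\omega|_g^2\,dv_g = \int_\rho^\infty \Big(\int_{S_t} |\omega|_g^2\,d\sigma_g\Big)\,dt = \int_\rho^\infty (t+R_0)^2\,X(t)\,dt ,
\]
which is finite by \eqref{Eq:11IX17-OFO}; in particular $\int_\rho^\infty (t+R_0)^2 X(t)\,dt < \infty$. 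Now suppose for contradiction that $F(\infty) < \frac{3}{2}$. Then $3 > 2F(\infty)$, so Corollary~\ref{Cor:09IX17-CX} applies with $\gamma = 3$ and yields $\liminf_{r\to\infty} r^3 X(r) = \infty$; since a liminf equal to $+\infty$ forces the full limit to be $+\infty$, there is $r_2 > 0$ such that $X(r) \geq r^{-3}$ for all $r \geq r_2$. But $(t+R_0)^2 \geq t^2$, so $(t+R_0)^2 X(t) \geq t^{-1}$ for $t \geq r_2$, whence $\int_{r_2}^\infty (t+R_0)^2 X(t)\,dt \geq \int_{r_2}^\infty \frac{dt}{t} = \infty$, contradicting the integrability established above. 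Hence $F(\infty) \geq \frac{3}{2}$.

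The argument is essentially bookkeeping once Corollary~\ref{Cor:09IX17-CX} is in hand, so I do not expect a real obstacle here; the substantive work — the monotonicity and the refined differential inequality for the Almgren-type frequency $F$ — has already been done in Lemmas~\ref{Lem:09IX17-L1}--\ref{Lem:09IX17-L2}. Two points merit being said explicitly: $F(\infty)$ is well defined because $F$ is non-increasing (Lemma~\ref{Lem:09IX17-L2}) and $X(r) > 0$ for large $r$ under the standing hypothesis $\omega \not\equiv 0$; and the coarea identity above is legitimate precisely because $r$ is smooth with unit gradient on $\Omega_{R_0,\infty}$.
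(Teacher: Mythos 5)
Your proof is correct and follows essentially the same route as the paper's: both use Corollary~\ref{Cor:09IX17-CX} to obtain a polynomial lower bound on $X$ and then test this against the integrability of $(r+R_0)^2 X(r)$ coming from the $L^2$ hypothesis via the coarea formula. The only cosmetic difference is that you argue by contradiction using the single exponent $\gamma = 3$, whereas the paper derives $2F(\infty) + \varepsilon > 3$ directly for every $\varepsilon > 0$ and sends $\varepsilon \to 0$; the logical content is identical.
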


\begin{proof}
By Corollary \ref{Cor:09IX17-CX}, for any $\varepsilon > 0$, there exists $r_\varepsilon$ such that
\[
X(r) \geq (r + R_0)^{-2F(\infty) - \varepsilon} \text{ for } r > r_\varepsilon
	\;.
\]
Thus, by \eqref{Eq:11IX17-OFO}, we have
\[
\infty > \int_{r_\varepsilon}^\infty (r + R_0)^2\,X(r)\,dr \geq \int_{r_\varepsilon}^\infty (r + R_0)^{2 - 2F(\infty) - \varepsilon}\,dr
	\;.
\]
This implies that $2F(\infty) + \varepsilon > 3$. Sending $\varepsilon \rightarrow 0$ we obtain the conclusion.
\qed
\end{proof}

We now wrap up the argument, by showing, using the estimate for $F'$ of Lemma \ref{Lem:09IX17-L2}, that $X$ cannot decay faster than $r^{-3}$, unless $\omega \equiv 0$.

\begin{proposition}\label{Prop:19I18-P1}
Let $(M^3,g)$ be an
asymptotically flat three dimensional Riemannian manifold with or without boundary satisfying the asymptotic flatness condition \eqref{Eq:09IX17-A1}. If a one-form $\omega \in L^2(\Mext)$ satisfies \eqref{Eq:08IX17-2}
on $M$ for some non-zero $a \in \RR \setminus\{0\}$,
then $\omega \equiv 0$.
\end{proposition}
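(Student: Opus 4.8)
The plan is to derive a contradiction from $\omega \not\equiv 0$ by combining the three corollaries preceding the statement with the quantitative frequency estimate \eqref{Eq:14IX17-E1}. First I would reduce to the case $\partial M = S_0$, a large coordinate sphere of radius $R_0$, by restricting to the asymptotically flat end: since \eqref{Eq:08IX17-2} is a second-order linear system with diagonal (hence scalar-type) principal part it has the unique continuation property, so $\omega \equiv 0$ on $\Mext$ forces $\omega \equiv 0$ on all of $M$. Assuming then $\omega \not\equiv 0$, the dichotomy in Lemma~\ref{Lem:09IX17-L2} gives $X(r) > 0$ for all $r > r_1$ and $F$ non-increasing on $[r_1,\infty)$; in particular $F(\infty) = \lim_{r\to\infty} F(r)$ exists, $F(r) \ge F(\infty)$, and $F(\infty) \ge 0$ (using $E \ge 0$ from Lemma~\ref{Lem:09IX17-L1}). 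Corollary~\ref{Cor:14IX17-C1} then turns the hypothesis $\omega \in L^2(\Mext)$ into the lower bound $F(\infty) \ge \frac 32$.

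Next I would play the two-sided polynomial control on $X$ against the estimate \eqref{Eq:14IX17-E1} for $F'$. Fix $\varepsilon \in (0,1)$. Corollary~\ref{Cor:11IX17-C2} gives $X(r) \le C\,(r+R_0)^{-2F(\infty)}$, while Corollary~\ref{Cor:09IX17-CX} with $\gamma = 2F(\infty)+\varepsilon$ gives $X(r) \ge (r+R_0)^{-2F(\infty)-\varepsilon}$ for all large $r$. Integrating the lower bound over $\Omega_{r,\infty}$ via the coarea formula,
\[
 \int_{\Omega_{r,\infty}} |\omega|_g^2\,dv_g = \int_r^\infty (s+R_0)^2\,X(s)\,ds \ge \frac{(r+R_0)^{3-2F(\infty)-\varepsilon}}{2F(\infty)+\varepsilon-3} \qquad (r\ \text{large}),
\]
the integral converging precisely because $2F(\infty) \ge 3$. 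Inserting this, together with the upper bound on $X$ and $\exp(\,\cdot\,) \ge 1$, into \eqref{Eq:14IX17-E1} yields
\[
 \frac{d}{dr} F(r) \le - c\,(r+R_0)^{1-\varepsilon} \qquad (r\ \text{large})
\]
for some $c > 0$; since $1-\varepsilon > -1$ the right-hand side is not integrable at infinity, so $F(\infty) = -\infty$, contradicting $F(\infty) \ge 0$. Hence $\omega \equiv 0$.

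The hard part is not here but upstream — the monotonicity of $F$ and, above all, the refined estimate \eqref{Eq:14IX17-E1}, together with the matching upper and lower polynomial decay rates for $X$. Within the present argument the one point to watch is the borderline value $F(\infty) = \frac 32$: one might worry that the $L^2$ bound $F(\infty) \ge \frac 32$ and an estimate of the same order coming from \eqref{Eq:14IX17-E1} merely pin $F(\infty)$ down instead of contradicting anything. They do not, because Corollary~\ref{Cor:09IX17-CX} supplies a lower decay bound with an arbitrarily small loss $\varepsilon$, and it is exactly this $\varepsilon$ that keeps the exponent $1-\varepsilon$ in the last display strictly above $-1$, so the contradiction survives the critical case.
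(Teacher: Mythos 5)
Your argument is correct and reproduces the paper's proof essentially line by line: assume $\omega\not\equiv 0$, invoke the dichotomy in Lemma~\ref{Lem:09IX17-L2} to get $X>0$ and $F$ monotone, use Corollary~\ref{Cor:14IX17-C1} for $F(\infty)\ge\tfrac32$, combine the two-sided bounds on $X$ from Corollaries~\ref{Cor:11IX17-C2} and~\ref{Cor:09IX17-CX} with the coarea identity, and feed the resulting lower bound on $X(r)^{-1}\int_{\Omega_{r,\infty}}|\omega|_g^2\,dv_g$ into \eqref{Eq:14IX17-E1} to force $F'(r)\lesssim -(r+R_0)^{1-\varepsilon}$, contradicting the boundedness of $F$. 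Your remark on why the borderline value $F(\infty)=\tfrac32$ causes no trouble is a nice clarification, but it does not change the route.
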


\begin{proof} Assume by contradiction that $\omega \not \equiv 0$. By Lemma \ref{Lem:09IX17-L2}, there exists some $r_1$ such that $X(r) > 0$ for all $r > r_1$.

Let $F(\infty)$ be given by \eqref{Eq:11IX17-R2}. By Corollary \ref{Cor:14IX17-C1}, $F(\infty) \geq \frac{3}{2}$.

Fix some $\varepsilon > 0$. By Corollaries \ref{Cor:11IX17-C2} and \ref{Cor:09IX17-CX}, there exists some $r_\varepsilon > 0$ such that
\[
(r + R_0)^{-2F(\infty) -\varepsilon} \leq X(r) \leq C(r + R_0)^{-2F(\infty)} \text{ for } r > r_\varepsilon
	\;,
\]
which implies that
\[
\frac{1}{X(r)} \int_{\Omega_{r,\infty}} |\omega|_g^2\,dv_g
	= \frac{1}{X(r)} \int_{r}^{\infty} (r + R_0)^2\,X(r)\,dr \geq \frac{1}{C} (r + R_0)^{3 - \varepsilon}
	\text{ for } r > r_\varepsilon
		\;.
\]
Returning to the estimate \eqref{Eq:14IX17-E1} in Lemma \ref{Lem:09IX17-L2}, we obtain
\[
\frac{d}{dr} F(r) \leq - \frac{a^2}{C} (r + R_0)^{1 - \varepsilon} \text{ for } r > \max(r_1, r_\varepsilon)
	\;.
\]
As $F$ is bounded, this is impossible for small $\varepsilon$. This contradiction finishes the proof.
\qed
\end{proof}

\section{Stationary case}
 \label{s19IX17.4}

In this section we will prove non-existence of stationary non-inheriting solutions. This will follow from a general result concerning $L^2$-solutions of PDEs arising from
elliptic second order Laplacian-type operators, Corollary~\ref{C21XII17.1} below. As already mentioned, the results here also cover the static case of Section~\ref{s19IX17.2} where, however, the result was obtained by a more elementary argument. We believe that both proofs have interest in their own.

Similarly to \eqref{Eq:09IX17-A1}, we will assume that there exists a constant $\delta>0$ such that
%
\begin{equation}\label{21XII17.31}
 g_{ij}-\delta_{ij} =
  O_k(r^{-\delta})
  \,,
  \quad
  V-1 =
  O_k(r^{-\delta})
  \,, \quad
  \theta_i =
  O_k(r^{-\delta})
  \,,
\end{equation}
for some $k$ large enough, which we leave unspecified. Here we write $f=
O_k(r^{-\delta})$ if for all multi-indices $\alpha$ with $0\le |\alpha|\le k$ we have
$$
  r^{|\alpha|} \partial^\alpha f = O(r^{-\delta})
$$
for large $r$.

Assuming \eqref{21XII17.31} and $V>0$
one readily checks that the stationary inheriting equations \eqref{26IX17.13}
satisfy the hypotheses of Corollary~\ref{C21XII17.1} below, leading to

\begin{thm}
 \label{T21XII17.1}
There are no asymptotically flat solutions of the Maxwell-Einstein equations with a stationary metric and non-inheriting Maxwell fields.
\end{thm}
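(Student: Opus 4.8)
The plan is to obtain Theorem~\ref{T21XII17.1} as an immediate consequence of the general unique continuation statement Corollary~\ref{C21XII17.1}, once the stationary non-inheriting equations have been cast in the form required there. First, as explained in the discussion following Theorem~\ref{T25IX17.1}, if the duality parameter $a$ is not constant then $F$ is null and the spacetime belongs to the Robinson--Trautman, Kundt or pp-wave families; the pp-wave case is not asymptotically flat by~\cite{ChBeig1}, and the remaining cases are set aside, so it suffices to treat the case in which $a$ is a nonzero real constant. Writing the stationary metric as $\fourgh = -V^2(dt+\theta_i dx^i)^2 + g_{ij}dx^idx^j$ with $K=\partial_t$ and $V>0$ on the region under consideration, the derivation recalled in Section~\ref{s19IX17.3} (following~\cite{t2}) shows that the non-inheriting conditions \eqref{inh1} are equivalent to the existence of a complex one-form $\zeta$ with $\mcL_K\zeta=0$ and $E_i + \mathrm{i}B_i = V^{-1}\zeta_i e^{\mathrm{i}at}$ satisfying the twisted first-order system \eqref{21XII17.11}--\eqref{26IX17.11}, whence the second-order equation \eqref{26IX17.13}.

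Next I would use \eqref{21XII17.11} to replace every second derivative of $\zeta$ occurring on the right-hand side of \eqref{26IX17.13} by first- and zeroth-order terms, so that $\zeta$ solves a homogeneous linear second-order elliptic system whose principal part is diagonal and equal to the scalar Laplacian of $g$ acting componentwise. The $V^{-1}$ factor on the right of \eqref{21XII17.11} is harmless: either it is absorbed into a conformal rescaling $\hat{g}=V^{-2}g$ of the spatial metric, which is still asymptotically flat because $V-1=O_k(r^{-\delta})$, or it is kept and the discrepancy $V^{-1}-1$ is treated as a decaying perturbation of the lower-order coefficients. By \eqref{21XII17.31} all these coefficients, together with sufficiently many of their derivatives, decay at the rates demanded by Corollary~\ref{C21XII17.1}. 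Finally, asymptotic flatness together with finiteness of the total energy gives $W\in L^2(\Mext)$ as in \eqref{19OX17.1}, hence $\zeta\in L^2(\Mext)$.

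Corollary~\ref{C21XII17.1} then forces $\zeta\equiv 0$ on the connected component of $\{V>0\}$ that contains the asymptotically flat end; propagating this zero by the unique continuation already contained in that corollary (or, off the horizon, by the analyticity results of~\cite{t2}) one gets $\zeta\equiv0$ wherever the equations are valid. Thus $E_i=B_i=0$ in the duality-rotating part of the field, i.e.\ $\mcL_KF=0$, so the Maxwell field inherits the stationarity; in particular no strictly stationary ``soliton'' and no asymptotically flat stationary black hole can be non-inheriting, the zeros of $V$ posing no obstruction since $F$ extends regularly across a bifurcate horizon in any case. The main obstacle I anticipate is the bookkeeping in the second paragraph: one must check that, after the substitution eliminating the second derivatives, the reduced version of \eqref{26IX17.13} has \emph{exactly} the structure (diagonal principal part, coefficient decay and regularity controlled by $\delta$ and the number $k$ in \eqref{21XII17.31}, no inhomogeneous term) required in order to apply Corollary~\ref{C21XII17.1} to a complex vector-valued unknown, and that the conformal rescaling and the elimination step do not spoil any of these properties.
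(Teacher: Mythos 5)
Your argument is essentially the paper's own: reduce to constant $a\neq0$ via Theorem~\ref{T25IX17.1} and the discussion of the null case, pass to the complex one-form $\zeta$ satisfying the twisted system \eqref{21XII17.11}--\eqref{26IX17.13}, eliminate the second derivatives on the right of \eqref{26IX17.13} using \eqref{21XII17.11} to get a second-order equation with scalar diagonal principal part, and then invoke Corollary~\ref{C21XII17.1} (with $\lambda=a^2>0$) to conclude $\zeta\equiv 0$. The ``bookkeeping'' you flag as the main obstacle is precisely what the paper itself dispatches with the phrase that ``one readily checks'' that the resulting equation satisfies the hypotheses of Corollary~\ref{C21XII17.1}; that step genuinely does require verifying the decay orders in \eqref{21XII17.31} are carried through the substitution and the (optional) conformal rescaling, that the perturbation lands in $\Diffscc^{1,\delta}$, and that the corollary can be applied to the complex vector-valued unknown $\zeta$ (the presence of bundles makes no difference, as noted in the paper), so your concern is well placed but does not indicate a divergence from the paper's route.
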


\begin{remark}
{\rm
In fact, our proof 
applies to the more general class of metrics \eqref{23XII17.1}-\eqref{9XII17.1} below.
}
\end{remark}

We emphasise that no global hypotheses on the solution are imposed other than the existence of an asymptotically flat end.

The operators covered by our analysis below include those of the form $\Delta_g-\lambda$,
$\lambda>0$, where
\begin{equation}\label{23II18.1}
     \Delta_g= - g^{ij} \nabla _i \nabla_j
\end{equation}
is the Laplacian of an asymptotically Euclidean metric, and
their modifications by   decaying   lower order terms. This
includes operators acting on sections of vector bundles which behave
like the Laplacian both in terms of the highest derivatives and in
terms of the asymptotic behavior of the coefficients at the boundary,
i.e.\ at metric infinity. We show that such solutions in fact
vanish identically.

The methods are Carleman-type estimates, phrased in a way that was
used in \cite{Vasy:Exponential} to analyze geometric $N$-body problems,
showing unique continuation at infinity for all second order
perturbations of the Laplacian.  These in
turn were motivated by
the closely related works of Froese and Herbst \cite{FroExp}
and the unique continuation theorems at infinity discussed in
\cite{Hormander:Uniqueness} and \cite[Theorem~17.2.8]{Hor}. The
key estimates arise from a positive commutator estimate for the exponentially
conjugated Hamiltonian, which is closely related to H\"ormander's
solvability condition for PDE's \cite{Hormander:Differential,
Hormander:Solutions, Duistermaat-Sjostrand:Global}; see
\cite{Zworski:Numerical} for a
discussion, including the relationship to numerical computation

The operators $H$ that we consider below will be
acting on sections of a vector bundle $E$ which is equipped with a
Hermitian metric.
We assume that $H=\Delta\otimes\Id_E+V$ where
$\Delta \equiv \Delta_g$
is
as in \eqref{23II18.1}, and recall that $g$ satisfies $g-g_0\in
S^{-\delta}(\Xb;\Tsc^*\Xb^{\otimes^2_s})$ for some $\delta>0$,
and $V\in \Diffscc^{1,\delta}(\Xb;E)=S^{-\delta}\Diffsc^1(\Xb;E)$ is formally self-adjoint; see Section~\ref{s25X17.1} for notation.

We prove the following results:
The first theorem states that positive energy (eigenvalue) eigenfunctions
of $H$ decay superexponentially. There is a simple modification of
the proof to show exponential decay for negative energy eigenfunctions, at any
rate $\alpha$ where $\alpha^2<-\lambda$, $\lambda<0$ the
eigenvalue; this corresponds to the $N$-body result in which case the
decay rate is given by the square root of the distance to the next
threshold above $\lambda$, which is $0$ if $\lambda<0$, and
non-existent (can be considered as $+\infty$) if $\lambda>0$. This fact also explains why it is natural to consider
the unique continuation theorem separately, namely why
super-exponential decay assumptions are natural there; unique
continuation, given superexponential decay, holds for {\em arbitrary}
eigenvalues.

In fact, we show a version of the aforementioned results by only assuming that the equation holds on a collar neighborhood of the boundary at infinity. So let   $\Xbext$ be a collar neighborhood $\{0\le x \le x_1\}$, for some $x_1>0$, of the boundary at infinity $\partial \Xb = \{x=0\}$. In the case of asymptotically flat metrics we have $x=1/r$, and the interior of $\Xbext$ coincides with the asymptotically flat region $\{r> R_1:= 1/x_1\}$.
We have (compare \cite[Theorem~3.1]{Vasy:Exponential}, \cite[Proposition~B.2]{Vasy:Propagation-2} and
\cite[Theorem 2.1]{FroExp}):

\begin{thm}
Let $\lambda>0$, and suppose that
$\psi\in L^2_{\scl}(\Xbext)$ satisfies $H\psi=\lambda\psi$. Then $e^{\alpha/x}\psi
\in L^2_{\scl}(\Xbext)$ for all $\alpha\in\Real$.
\end{thm}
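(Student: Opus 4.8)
The plan is a positive-commutator (Carleman) argument carried out in the scattering pseudodifferential calculus, adapting \cite[Theorem~2.1]{FroExp} and \cite[Theorem~3.1]{Vasy:Exponential}, \cite[Proposition~B.2]{Vasy:Propagation-2}. Since $e^{\alpha/x}\le 1$ on $\Xbext$ for $\alpha\le 0$, only $\alpha>0$ requires proof; moreover, as $\psi$ is given only on the collar $\{0\le x\le x_1\}$ while $e^{\alpha/x}$ blows up only at $x=0$, it suffices to bound $e^{\alpha/x}\psi$ in $L^2_\scl(\{x\le x_1/2\})$. To keep every pairing finite from the outset I would first replace $\alpha/x$ by the bounded regularised weights $F_\epsilon:=\alpha/(x+\epsilon)$, $\epsilon>0$: these are smooth, increase to $\alpha/x$ as $\epsilon\downarrow 0$, satisfy $e^{F_\epsilon}\in S^0(\Xb)$ for each $\epsilon$, and have $|dF_\epsilon|_g^2$ bounded uniformly in $\epsilon$ with limit $\alpha^2$ as $\epsilon\downarrow0$ (in the scattering metric $|dx|_g\sim x^2$, so $dF_\epsilon$ is a uniformly bounded scattering one-form, the exact analogue of $d(\alpha r)$ in the Euclidean setting). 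Fix $\chi=\chi(x)\in\Cinf$ with $\chi\equiv1$ on $\{x\le x_1/2\}$ and $\supp\chi\subset\{x<x_1\}$, and set $\psi_\epsilon:=\chi\,e^{F_\epsilon}\psi$; by elliptic regularity for the elliptic operator $H-\lambda$ on the collar, $\psi\in H^\infty_\scl$ there, so $\psi_\epsilon\in H^\infty_\scl$ and all quantities below are finite for each $\epsilon>0$. Then $(H-\lambda)\psi_\epsilon=e^{F_\epsilon}[H,\chi]\psi=:r_\epsilon$, where $r_\epsilon$ is supported in the fixed compact set $\{x_1/2\le x\le x_1\}$ on which $e^{F_\epsilon}$ is bounded uniformly in $\epsilon$; hence $\|r_\epsilon\|_{L^2_\scl}\le C$ with $C$ depending only on $\|\psi\|_{L^2_\scl(\Xbext)}$, on $\lambda$, and on the coefficients of $H$. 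This is how the ``collar only'' hypothesis is absorbed.

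Next I would conjugate and split. A direct computation gives
\[
e^{F_\epsilon}He^{-F_\epsilon}-\lambda=\underbrace{\big(\Delta_g\otimes\Id_E-|dF_\epsilon|_g^2+V-\lambda\big)}_{=:\,P_\epsilon}+\underbrace{\big(2g^{ij}(\nabla_iF_\epsilon)\nabla_j+\Delta_gF_\epsilon\big)}_{=:\,Q_\epsilon}\,,
\]
where $P_\epsilon$ is symmetric and $Q_\epsilon$ skew-symmetric, up to a remainder in $\Diffscc^{1,\delta}$ (the skew part of $V$, which is $O(x^\delta)$). From $(P_\epsilon+Q_\epsilon)\psi_\epsilon=r_\epsilon$, $P_\epsilon^*=P_\epsilon$ and $Q_\epsilon^*=-Q_\epsilon$ one gets
\[
\|r_\epsilon\|^2=\|P_\epsilon\psi_\epsilon\|^2+\|Q_\epsilon\psi_\epsilon\|^2+\langle\psi_\epsilon,[P_\epsilon,Q_\epsilon]\psi_\epsilon\rangle\,,
\]
with $[P_\epsilon,Q_\epsilon]$ self-adjoint. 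The heart of the matter is a positive-commutator estimate: uniformly in $\epsilon$ (and, crucially, uniformly in $\alpha$), for $x$ small,
\[
\langle\psi_\epsilon,[P_\epsilon,Q_\epsilon]\psi_\epsilon\rangle\ \ge\ c\,\|\psi_\epsilon\|_W^2-\frac12\big(\|P_\epsilon\psi_\epsilon\|^2+\|Q_\epsilon\psi_\epsilon\|^2\big)-C\,\|\psi\|_{L^2_\scl(\{x\ge x_1/2\})}^2\,,
\]
where $\|\cdot\|_W$ is an $L^2_\scl$ norm weighted by a small power of $x$ and $c>0$. Granting this, absorbing the $\|P_\epsilon\psi_\epsilon\|^2+\|Q_\epsilon\psi_\epsilon\|^2$ term, one obtains $c\,\|\psi_\epsilon\|_W^2\le\|r_\epsilon\|^2+C\,\|\psi\|_{L^2_\scl(\{x\ge x_1/2\})}^2\le C'$, uniformly in $\epsilon$. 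Letting $\epsilon\downarrow0$ and using $\psi_\epsilon=\chi e^{F_\epsilon}\psi$ with $F_\epsilon\uparrow\alpha/x$, Fatou's lemma gives $x^{1/2}e^{\alpha/x}\psi\in L^2_\scl(\{x\le x_1/2\})$; since $x^{1/2}e^{\alpha/x}\ge e^{(\alpha-1)/x}$ for small $x$ and $\alpha>0$ was arbitrary, the power loss is immaterial, and we conclude $e^{\alpha/x}\psi\in L^2_\scl(\Xbext)$ for all $\alpha\in\Real$.

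The step I expect to be the main obstacle is the positive-commutator estimate, where two features must be secured simultaneously and uniformly in $\epsilon$ and $\alpha$. First, the choice of commutant: it must be elliptic at fiber infinity of the scattering cotangent bundle, so that $\|P_\epsilon\psi_\epsilon\|^2+\|Q_\epsilon\psi_\epsilon\|^2$ controls two derivatives of $\psi_\epsilon$ and the only regularity input on $\psi$ is its a priori membership in $L^2$, while also having the correct sign along the $dF_\epsilon$-direction over the boundary face $\{x=0\}$. On the characteristic set of $P_\epsilon$ one has $|\xi|_g^2=|dF_\epsilon|_g^2+\lambda+O(x^\delta)\ge\lambda>0$, so the bicharacteristics stay away from the zero section and escape toward $\{x=0\}$; this is exactly where $\lambda>0$ enters, playing the role of the distance to the next threshold above the energy --- which for positive energy is $+\infty$ --- and this is why the gain $c$ does not degenerate as $\alpha\to\infty$, i.e.\ why the decay is superexponential rather than capped at a finite rate. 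Second, the perturbations: since $V\in\Diffscc^{1,\delta}$ and $g-g_0\in S^{-\delta}(\Xb;\Tsc^*\Xb^{\otimes^2_s})$, their contributions to $[P_\epsilon,Q_\epsilon]$ are $O(x^\delta)$ relative to the main term and can be absorbed for $x$ small --- this is precisely the role of the decay exponent $\delta>0$. Carrying out these two points, together with the bookkeeping of the boundary and lower-order error terms, is the technical core, supplied in the requisite generality by \cite{Vasy:Exponential,FroExp,Vasy:Propagation-2}.
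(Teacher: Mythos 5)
Your outline has the right architecture (conjugation, split into symmetric/skew parts, positive commutator, regularize-then-pass-to-the-limit), but it diverges from the paper's proof in two structural respects and has one genuine gap.

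The paper does \emph{not} prove this in one shot. It first establishes superpolynomial decay (Proposition~\ref{prop:poly-decay}), then sets $\alpha_1=\sup\{\alpha\ge 0:\ e^{\alpha/x}\psi\in L^2_\scl\}$ and runs a bootstrap: for $\alpha<\alpha_1$ it conjugates by
$F_\beta=\alpha/x+\beta\log(1+\gamma/(\beta x))$ and lets $\beta\to\infty$ to gain a fixed $\gamma>0$, contradicting finiteness of $\alpha_1$. The two parameters $(\beta,\gamma)$ and the bootstrap are essential, and the cases $\alpha_1>0$ and $\alpha_1=0$ are treated by \emph{different} commutants ($x^2D_x$ versus a rescaling $\tilde A\sim xD_x$), because the commutator degenerates when the leading coefficient of $\im P$ vanishes.

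The gap in your proposal lies precisely where you flag it, the positive-commutator estimate, and it is produced by your choice of regularizer. For the paper's $F_\beta$, the commutant coefficient $-x^2\partial_xF_\beta=\alpha+\gamma(1+\gamma/(\beta x))^{-1}$ equals the constant $\alpha$ up to an error that is $O(\gamma)$ in every symbol seminorm; this is exactly what puts the error in $\sqi[\re P,\im P]$ into the form $2\alpha\,\sqi[x^2D_x,H]+\gamma xR+x^{1+\delta}R'$ (equations \eqref{15XII17.1}--\eqref{eq:comm-32}), so that the loss $\gamma b_8\|x^{1/2}\psi_F\|^2$ sits under the gain $2\alpha c_0\|x^{1/2}\psi_F\|^2$ once $\gamma$ is chosen small. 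Your $F_\epsilon=\alpha/(x+\epsilon)$ gives instead $-x^2\partial_x F_\epsilon=\alpha\,(x/(x+\epsilon))^2$, which vanishes at $x=0$ for every $\epsilon>0$ and deviates from the constant $\alpha$ by a quantity that is $O(\alpha)$, not $O(\gamma)$. Consequently: (i) $\im P_\epsilon$ degenerates at $x=0$ for each $\epsilon>0$, so a straightforward Froese--Herbst-type commutator argument does not apply and one is thrown, for all $\alpha$, into the analogue of the paper's delicate $\alpha_1=0$ branch; (ii) the terms produced by $x^2\partial_x(-x^2\partial_xF_\epsilon)$ in $\sqi[\re P_\epsilon,\im P_\epsilon]$ carry no small parameter to make them dominated by the Mourre gain, and since the Mourre estimate (Proposition~\ref{prop:Mourre-est}) is stated at a single fixed energy, one would also have to cope with the fact that the ``local energy'' $\lambda+|dF_\epsilon|^2_g$ is not approximately constant (it ranges over $[\lambda,\lambda+\alpha^2]$ as $x$ varies at fixed $\epsilon$). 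None of this appears in your sketch, which only discusses the $O(x^\delta)$ errors coming from $g-g_0$ and $V$. Finally, the parenthetical insistence that the commutator estimate be \emph{uniform in $\alpha$} is a red herring here: for this theorem a fixed $\alpha$ is treated at a time, and it is only in the unique continuation theorem (Theorem~\ref{thm:unique}, where $\alpha=h^{-1}\to\infty$) that such uniformity, in a genuinely semiclassical form, becomes the crux.

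If you wish to salvage the one-shot idea, the cleanest repair is to adopt the paper's regularizer $F_\beta$ directly (so that $-x^2\partial_xF-\alpha$ and all its $x\partial_x$-derivatives are $O(\gamma)$), and then accept that you still need the $\alpha_1$-supremum bootstrap to iterate the $\gamma$-gain, together with the separate $\alpha_1=0$ commutant. With the ingredients as you have written them, the crucial estimate is not established.
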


The unique continuation theorem at infinity is the following; note
that $\lambda$ is now arbitrary (see \cite[Theorem~4.1]{Vasy:Exponential}, \cite[Proposition~B.3]{Vasy:Propagation-2} and
\cite[Theorem 3.1]{FroExp} for related results):

\begin{thm}
Let $\lambda\in\Real$.
If
$H\psi
=\lambda\psi$, $\exp(\alpha/x)\psi\in L^2_{\scl}(\Xbext)$ for all $\alpha$,
then $\psi=0$.
\end{thm}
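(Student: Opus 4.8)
The plan is to reduce the theorem to a Carleman estimate near the boundary at infinity $\partial\Xb=\{x=0\}$ and then to conclude by letting the weight parameter go to infinity. The estimate to be proved is: there exist $C>0$ and $\alpha_0>0$ such that, for all $\alpha\ge\alpha_0$ and all $v\in\dCI$ supported in the interior of $\Xbext$,
\[
\alpha\,\|v\|_{L^2_{\scl}}^2\;\le\;C\,\|H_\alpha v\|_{L^2_{\scl}}^2,\qquad H_\alpha:=e^{\alpha/x}(H-\lambda)e^{-\alpha/x},
\]
with $C$ \emph{independent of $\alpha$} (the precise form of the weight on the left is inessential, and a gain of one scattering derivative is also available). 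Granting this, fix a cutoff $\chi=\chi(x)$ with $\chi\equiv1$ on $\{x\le x_1/2\}$ and $\supp\chi\subset\{x<x_1\}$, and apply the estimate to $v=e^{\alpha/x}\chi\psi$; this is legitimate after a standard density/truncation argument using that $e^{\alpha'/x}\psi\in L^2_{\scl}(\Xbext)$ for \emph{all} $\alpha'$ and that $\psi\in H^2_{\loc}$ by elliptic regularity. Since $H_\alpha v=e^{\alpha/x}[H,\chi]\psi$ is supported in $\{x_1/2\le x\le x_1\}$, where $e^{\alpha/x}\le e^{2\alpha/x_1}$, while the left-hand side is bounded below by $\alpha\,e^{8\alpha/x_1}\,\|\psi\|_{L^2(\{x\le x_1/4\})}^2$ (recall $L^2_{\scl}$ is $L^2(dv_g)$ near infinity), we obtain
\[
\alpha\,e^{4\alpha/x_1}\,\|\psi\|_{L^2(\{x\le x_1/4\})}^2\;\le\;C\,\|[H,\chi]\psi\|_{L^2}^2\;<\;\infty,
\]
so letting $\alpha\to\infty$ forces $\psi\equiv0$ on $\{x\le x_1/4\}$. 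Weak unique continuation for the second order elliptic equation $(H-\lambda)\psi=0$ (applicable given the regularity of the coefficients) then propagates the vanishing throughout the connected collar, so $\psi\equiv0$ on $\Xbext$ — and on all of $M$ if the equation holds globally. For $\lambda>0$ the standing hypothesis on $\psi$ is supplied automatically by the super-exponential decay theorem above, which is how one feeds into Corollary~\ref{C21XII17.1}; for general $\lambda$ it must be assumed.

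The Carleman estimate itself would be proved by a positive commutator estimate for $H_\alpha$ in the scattering pseudodifferential calculus of Section~\ref{s25X17.1}, adapting \cite[Proposition~B.3]{Vasy:Propagation-2} and \cite[Theorem~4.1]{Vasy:Exponential} (which in turn go back to \cite[Theorem~3.1]{FroExp}), and reusing the machinery behind the super-exponential decay theorem above. Conjugation by $e^{\alpha/x}$ acts transparently in scattering coordinates $(x,y,\nu,\eta)$ near the front face, with $\nu$ dual to $x$: it performs the shift $\nu\mapsto\nu-i\alpha$, so the leading symbol of $H_\alpha$ is $(\nu-i\alpha)^2+|\eta|_h^2-\lambda$, up to terms of strictly lower decay order coming from $g-g_0\in S^{-\delta}$ and $V\in\Diffscc^{1,\delta}(\Xb;E)$. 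Decomposing $H_\alpha=H_{\alpha,\mathrm{re}}+iH_{\alpha,\mathrm{im}}$ into self-adjoint and anti-self-adjoint parts, $H_{\alpha,\mathrm{re}}$ has principal symbol $p_{\mathrm{re}}=\nu^2+|\eta|_h^2-\alpha^2-\lambda$ and $H_{\alpha,\mathrm{im}}$ has symbol $p_{\mathrm{im}}=-2\alpha\nu$, and for $v\in\dCI$ one has the algebraic identity
\[
\|H_\alpha v\|^2=\|H_{\alpha,\mathrm{re}}v\|^2+\|H_{\alpha,\mathrm{im}}v\|^2+\big\langle i[H_{\alpha,\mathrm{re}},H_{\alpha,\mathrm{im}}]\,v,v\big\rangle.
\]
The estimate then amounts to showing that, pairing the equation in addition against a suitable self-adjoint commutant $A$ (built from $x$-weights together with a component adapted from the generator of dilations, i.e.\ a scattering vector field transverse to the boundary), the combination of $\|H_{\alpha,\mathrm{re}}v\|^2$, the commutator $i[H_{\alpha,\mathrm{re}},A]$, and the anticommutator of $A$ with $H_{\alpha,\mathrm{im}}$ is bounded below by $c\,\alpha\,\|v\|^2$ modulo terms that are either absorbable into $\|H_{\alpha,\mathrm{re}}v\|^2$ or genuinely of lower order. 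This is the scattering-calculus version of H\"ormander's solvability/unique-continuation sign condition on the interplay of $p_{\mathrm{re}}$ and $p_{\mathrm{im}}$ on $\{p_{\mathrm{re}}=0\}$ (compare \cite[Theorem~17.2.8]{Hor}, \cite{Hormander:Uniqueness}): on the characteristic set $\{\nu^2+|\eta|_h^2=\alpha^2+\lambda\}$ the positivity ultimately reflects that the ``energy'' $\alpha^2+\lambda$ is positive and, for $\alpha$ large, dominates the competing imaginary-part contribution $p_{\mathrm{im}}$.

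The main obstacle is obtaining this commutator positivity \emph{uniformly in $\alpha$ and up to the front face $x=0$}. The difficulty is structural: for the exact weight $1/x$ the leading symbol of $i[H_{\alpha,\mathrm{re}},H_{\alpha,\mathrm{im}}]$ — indeed the Poisson bracket $\{p_{\mathrm{re}},p_{\mathrm{im}}\}$ — vanishes at the front face, and it degenerates further along the radial covectors $\eta=0$; so the required positive term must be extracted at the subprincipal level and from the $x$-expansion, which is precisely where the decay rates $S^{-\delta}$, $\Diffscc^{1,\delta}$ of the perturbations are used to ensure all remainders are strictly lower order and absorbable. Moreover $e^{\alpha/x}$ is not itself an element of the scattering calculus, so one works with truncated weights carrying an auxiliary parameter — or with symbolic approximants — and controls the discrepancy; the bookkeeping of these remainders with constants that do not deteriorate as $\alpha\to\infty$, together with a careful treatment of the radial-frequency region near the characteristic set (a threshold/radial-point type analysis, exactly as in \cite{FroExp,Vasy:Exponential}), is the technical heart of the argument. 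Once the Carleman estimate is in hand, the deduction of $\psi\equiv0$ is the short cutoff-and-limit argument of the first paragraph.
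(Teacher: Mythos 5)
Your overall strategy is the same as the paper's: conjugate by the exponential weight, use the algebraic identity
$\|P v\|^2 = \|\re P\,v\|^2 + \|\im P\,v\|^2 + \langle \sqi[\re P,\im P]\,v,v\rangle$,
establish a positive lower bound for the commutator near $x=0$ uniformly as $\alpha\to\infty$, and deduce a contradiction by pushing $\alpha\to\infty$; the paper just rescales semiclassically ($h=1/\alpha$, $P_h=h^2 P$) rather than stating an $\alpha$-independent Carleman inequality, and its final cut-off contradiction (\eqref{eq:h-comm-64}--\eqref{23XII17.2}) is morally your deduction in the first paragraph. That part is fine. But your picture of what the ``technical heart'' of the argument is, and where the obstacles lie, is wrong in three respects; if you actually tried to fill in the details as you describe you would be working against difficulties that do not arise here.

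First, no auxiliary self-adjoint commutant $A$ is needed in this theorem, and introducing one only muddles the computation. The conjugating weight already puts $\im P_h=-2h\,x^2D_x+\ldots$ into the picture, which to leading order is precisely (a rescaling of) the dilation-type commutant; the $P^*P$ identity applied to $\psi_F$ is therefore already the positive-commutator pairing. The operators $A$, $B$ of \eqref{4XII17.1}--\eqref{4XII17.2} and the Mourre-type estimate of Proposition~\ref{prop:Mourre-est} enter the proofs of polynomial and superexponential \emph{decay}, not the unique continuation theorem. Second, the ``degeneration of $\{p_{\mathrm{re}},p_{\mathrm{im}}\}$ along $\eta=0$'' is a non-issue and no radial-point or threshold analysis is involved: on the joint characteristic set of $\re P_h$ and $\im P_h$ one has $\nu=0$ and $|\eta|_h^2=1+O(h^2)$, so $\eta=0$ is simply an elliptic point for $\im P_h$ on the characteristic set of $\re P_h$. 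The paper encodes all of this at once by writing the key commutator as $xh\bigl(4 + 4\re P_h - \im P_h^2 + R\bigr)$ and absorbing the $\re P_h$, $\im P_h^2$ corrections into the two squares; the uniform positive constant $4$ is right there, and the text even remarks that in this theorem the commutator positivity ``is easy to see.'' Third, no regularization in $\alpha$ (auxiliary parameter, symbolic approximant) is required. The weight $F=\phi(x)\,\alpha/x$, with a fixed cut-off $\phi$, is already a genuine order-$1$ symbol, and because superexponential decay of $\psi$ is the hypothesis, $e^F\psi\in\dCI(\Xb)$ so that \eqref{eq:h-comm-8} can be applied directly. The regularization you describe (weights of the form $\alpha/x+\beta\log(1+\gamma/\beta x)$, with $\beta\to\infty$ at the end) is needed in the proof of Theorem~\ref{thm:exp-decay}, where one does not yet know decay at the target rate; the point of the present theorem is precisely that this step is not needed. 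Were you to insist on it here you would only accumulate bookkeeping with no gain.

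With those corrections, the rest of your reduction (cutoff $\chi$, letting $\alpha\to\infty$, then interior weak unique continuation to propagate $\psi\equiv 0$ from $\{x\leq \deltat/4\}$ to all of $\Xbext$, resp.\ $\Xb$) agrees with the paper, including the extra $[P_h,\chi]$ error when the equation only holds on a collar $\Xbext$.
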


As an immediate corollary we deduce the absence of positive eigenvalues
for first order perturbations
of $\Delta_{g}$:

\begin{cor}
 \label{C21XII17.1}
Let $\lambda>0$. Suppose that $H\psi
=\lambda\psi$, $\psi\in L^2_{\scl}(\Xbext)$.
Then $\psi\equiv 0$.
\end{cor}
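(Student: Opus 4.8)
The plan is to deduce the corollary by simply chaining the two theorems proved just above, so the proof itself is essentially formal; the real content sits inside those theorems. First I would invoke the superexponential decay theorem: since $\lambda>0$ and $\psi\in L^2_{\scl}(\Xbext)$ satisfies $H\psi=\lambda\psi$, that theorem yields $e^{\alpha/x}\psi\in L^2_{\scl}(\Xbext)$ for every $\alpha\in\Real$. At this point the only thing to check is that the standing hypotheses are in force, namely that $H=\Delta\otimes\Id+V$ with $g-g_0\in S^{-\delta}$ and $V\in\Diffscc^{1,\delta}(\Xb;E)$ formally self-adjoint — which is exactly the setup under which the theorem was stated.

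Next I would feed the conclusion of the first step into the unique continuation theorem at infinity. That theorem is stated for \emph{arbitrary} $\lambda\in\Real$ — in particular for our $\lambda>0$ — and asserts that $H\psi=\lambda\psi$ together with $\exp(\alpha/x)\psi\in L^2_{\scl}(\Xbext)$ for all $\alpha$ forces $\psi=0$. Combining the two steps gives $\psi\equiv 0$, which is the assertion. Note that both theorems, and hence the corollary, are phrased on the collar $\Xbext=\{0\le x\le x_1\}$ rather than on a complete manifold, so no global hypothesis is used.

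The main difficulty is therefore not in the corollary but in the two theorems feeding it, and I would expect it to be exactly the positive-commutator/Carleman estimate for the exponentially conjugated operator $e^{\alpha/x}He^{-\alpha/x}$, in the spirit of Froese--Herbst and of Vasy's treatment of geometric $N$-body problems: for $\lambda>0$ the conjugation produces a favorable sign in the commutator that is uniform in $\alpha$, which is what allows decay at \emph{every} rate; and once such decay is available, the same commutator structure (now with $\lambda$ arbitrary) upgrades it to identical vanishing. Granting these, the application to the problem at hand is routine: as indicated at the beginning of this section, one checks under \eqref{21XII17.31} that the reduced system \eqref{26IX17.13}, after using \eqref{21XII17.11} to replace the second derivatives of $\zeta$ on its right-hand side by lower-order terms, takes the form $H\psi=\lambda\psi$ with $\psi=\zeta$, $\lambda={\blue{a}}^2>0$, and $H$ a first-order perturbation of $\Delta_g$; Corollary~\ref{C21XII17.1} then gives $\zeta\equiv 0$, hence inheritance.
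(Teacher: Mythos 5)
Your proof is exactly the paper's intended one: the corollary is stated as "an immediate corollary" of the two preceding theorems, and your chaining of the superexponential decay theorem (Theorem~\ref{thm:exp-decay}) with the unique continuation theorem (Theorem~\ref{thm:unique}) is precisely that deduction. Your closing remarks on how Corollary~\ref{C21XII17.1} is then applied to \eqref{26IX17.13}, after eliminating second derivatives of $\zeta$ via \eqref{21XII17.11}, likewise match the paper's route to Theorem~\ref{T21XII17.1}.
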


\subsection{Definitions}
 \label{s25X17.1}

Recall from \cite{RBMSpec} the notion of a (short-range)  {\em scattering metric}.
Namely, with $\Xb$ is a manifold with boundary and $x$ is
a boundary defining function on $\Xb$, a (short-range) scattering metric $g_0$
is a Riemannian metric on $\red{M}:=\Xb^\circ$ which is of the form
\bel{23XII17.1}
 g_0=x^{-4}\,dx^2+x^{-2}h
\ee
near $\pa \Xb$, where $h$ is a symmetric
two-covariant tensor that restricts to a metric on $\pa \Xb$.
In this work we allow manifolds $(\red{M},g)$ which
might have several boundary components and asymptotic ends, with unspecified behaviour there except for one end where the metric is asymptotically flat, cf.\ \eqref{23XII17.1}. The discussion that follows applies only to such asymptotic regions.

As such, the metric $g_0$ as in \eqref{23XII17.1}
is a
smooth (on $\Xb$)
section of the second symmetric power of the scattering cotangent
bundle $\Tsc^*\Xb$, with some additional product structure. Indeed, near $\pa\Xb$, a general
smooth section of $\Tsc^*\Xb$ is a linear combination, with
$\CI(\Xb)$-coefficients, of $\frac{dx}{x^2},\frac{dy_j}{x}$, where
$y_j$ are local coordinates on $\pa \red{M}$, thus a general smooth section of
this bundle if a linear combination of
$\frac{dx^2}{x^4},\frac{dx\,dy_j}{x^3},\frac{dy_idy_j}{x^2}$; these
short range metrics thus have $1+O(x^2)$ for the coefficient of
$\frac{dx^2}{x^4}$, and $O(x)$ for the coefficient of $\frac{dx\,dy_j}{x^3}$.

Let $\Delta_{g_0}$
be the Laplacian of this metric. This is a typical element of
$\Diffsc(\Xb)$, the algebra of scattering differential operators. Here ``sc'' stands for ``scattering''.
The latter is generated, over $\Cinf(\Xb)$, by the vector fields
$\Vsc(\Xb)=x\Vb(\Xb)$; $\Vb(\Xb)$ being the Lie algebra of $\Cinf$ vector
fields on $\Xb$ that are tangent to $\pa \Xb$. Thus, in local
coordinates, and over $\CI(\Xb)$ (locally), $\Vsc(\Xb)$ is spanned by
$x^2\pa_x$ and $x\pa_{y_j}$, while $\Vb(\Xb)$ is spanned by $x\pa_x$
and $\pa_{y_j}$. We work with what might
be called a (very) long-range scattering metric, namely we assume that
$g$ is a conormal to $\pa \red{M}$, or in other words symbolic,
real
section of $\Tsc^*\Xb^{\otimes^2_s}$, of order $0$ with the extra
property that
\begin{equation}\label{9XII17.1}
 g-g_0\in S^{-\delta}(\Xb;\Tsc^*\Xb^{\otimes^2_s})
  \,,
  \ \text{for
some $ {0<\delta\le 1}$.}
\end{equation}
 Recall that, in this compactified notation, a symbol,
$a\in S^\alpha$,
of ``symbolic order'' $\alpha$, i.e. growth rate, is one satisfying that for all $P\in\Diffb(\Xb)$
(equivalently, for all finite, possibly empty, products $P$ of elements of $\Vb(\red{M})$),
$x^{\alpha}Pa\in L^\infty(\red{M})$;
 thus our very long range metrics in
particular allow $O(x^\delta)$ coefficients for
$\frac{dx\,dy_j}{x^3}$, and $1+O(x^\delta)$ for the coefficients for
$\frac{dx^2}{x^4}$. (The `long-range' metrics would have smooth
$1+O(x)$ $\frac{dx^2}{x^4}$ terms, i.e.\ would only have a product
structure {\em exactly at} $\pa \red{M}$, and would include asymptotically
Schwarzschild metrics; \red{our} `very long-range scattering
  metrics' satisfy even weaker conditions.)  We write
$$
 S^m\Diffsc^k(\Xb)=\Diffscc^{k,-m}(\Xb)=S^m\otimes\Diffsc^k(\Xb)
$$
for
scattering differential operators with such conormal/symbolic coefficients.

Classical symbols in $S^{\alpha}$ are symbols that have a one-step polyhomogeneous expansion, i.e.,
$\sum_{j\in \NN} x^{j-\alpha} f_j(y)
$, considered as an asymptotic summation. See \cite{RBMSpec} for more details; we follow closely the notations there.
In particular we write $L^2_{\scl}(\Xb)$ for $L^2(\red{M})$ with the measure induced from the asymptotically flat metric.

The operator space $S^m\Diffsc^k(\Xb)$ form a filtered algebra,
so
$$
A\in S^m\Diffsc^k(\Xb),\ B\in S^{m'}\Diffsc^{k'}(\Xb)
\
 \Longrightarrow
 \
 AB \in S^{m+m'}\Diffsc^{k+k'}(\Xb),
$$
which is in fact commutative to leading order both in terms of the
differentiability and the growth orders, so
\begin{equation}\label{13XII17.11}
[A,B]\in S^{m+m'-1}\Diffsc^{k+k'-1}(\Xb).
\end{equation}

It may help the reader if we explain why the Euclidean setting is a particular
example of this setup.
There $\red{M}$ is a vector space with a metric $g_0$,
which can hence by identified with $\Rn$. Moreover, $\Xb$ is the radial
(or geodesic) compactification of $\Rn$ to a ball. Explicitly, this
arises by considering `inverse' polar coordinates, and writing $w\in \red{M}$ as
$w=r\omega=x^{-1}\omega$, $\omega\in\Sn$, so $x=|w|^{-1}$,
e.g.\ in $|w|\geq 1$, and attaching $x=0$, i.e.\ $\{0\}_x\times\Sn$ to
$(0,1]_x\times S^n$ by simply extending the range of $x$. In particular,
$\pa \Xb$ is given by $x=0$, i.e.\ it is just $\Sn$. The metric $g_0$
then has the form $dr^2+r^{-2}h_0=x^{-4}dx^2+x^2h_0$, where $h_0$ is the
standard metric on $\Sn$, so $(\Xb,g_0)$
fits exactly into this framework. Then
$\Tsc \Xb,\Tsc^*\Xb$ are trivial vector bundles over
$\Xb$; namely $\Tsc^*\Xb=\Xb\times \red{M}^*$, $\red{M}^*$ being
the dual vector space of $\red{M}$. Thus, in terms of the coordinates $w_j$,
$\pa_{w_j}$ span $\Vsc(\Xb)$ over $\CI(\Xb)$, i.e.\ with classical
symbolic coefficients of order $0$ on $\red{M}$. This in particular shows
that the scattering differential operator algebra is the geometric
generalization of the algebras considered by Parenti
\cite{Parenti:Operatori} and
Shubin~\cite{Shubin:Pseudodifferential}. On the other hand, in the
complement of the origin, say in the region where $w_n>\ep|w_j|$,
$j\neq n$, $\ep>0$, $w_n\pa_{w_i}$, $i=1,\ldots,n$, span $\Vb(\Xb)$ in
the similar sense, which shows why the above description of symbolic
regularity is {\em exactly} the standard one on the vector space $\red{M}$.

\subsection{Sketch of proofs}

It might be helpful to the reader to provide an extended outline of proofs; the details will follow.
The rough idea
is to conjugate by exponential weights
$e^F$, where $F$ is a symbol of growth order $1$, for example $F=\alpha/x$ for small $x$.
If $\psi$ is an eigenfunction of $H$ of eigenvalue $\lambda$, then
$\psi_F=e^F\psi$ solves
\begin{equation*}
P\psi_F=0\Mwhere P=H(F)-\lambda=e^FHe^{-F}-\lambda.
\end{equation*}
Now let $\re P:=(P+P^*)/2$, where $*$ denotes formal adjoint relative to $L^2$
 is given by $H-\alpha^2-\lambda$, while $\im P:=(P-\re P)/\sqi $ is given by
$-2\alpha(x^2 D_x)$, modulo $x\Diffsc(\Xb)$; the notation is justified
by the principal symbol of $\re P$ being the real part of the
principal symbol of $P$, etc. Here, and throughout this section,
$$
 D_k = \frac 1{\sqi } \partial_k
 \,,
$$
with $\sqi =\sqrt{-1}$.
By elliptic regularity,
using $P\psi_F=0$, $\|\psi_F\|_{x^p\Hsc^k(\Xb)}$ is bounded by
$C_{k,p}\|\psi_F\|_{x^pL^2_{\scl}(\Xb)}$, so the order of various
differential operators  is irrelevant for the purpose of norm estimates, while the weight is important.
Since
\begin{equation*}
P^*P=(\re P)^2+(\im P)^2+i(\re P\im P-\im P\re P),
\end{equation*}
so
\begin{equation}\label{eq:comm-8-0}
0=(\psi_F,P^*P\psi_F)=\|\re P\psi_F\|^2+\|\im P\psi_F\|^2+
(\psi_F,\sqi [\re P,\im P]\psi_F).
\end{equation}
Now, being a commutator,
 $[\re P,\im P]\in x\Diffsc(\Xb)$, i.e.\ has an extra
order of vanishing, which shows that
\begin{equation*}
\|\re P\psi_F\|\leq C_1\|x^{1/2}\psi_F\|,
\ \|\im P\psi_F\|\leq C_1\|x^{1/2}\psi_F\|.
\end{equation*}
Here, and elsewhere, $\|\cdot \|$ denotes the $L^2$-norm with respect to the standard measure associated with the metric $g$, and $(\cdot,\cdot)$ the associated scalar product.
Due to the extra factor of $x^{1/2}$, this can be interpreted roughly
as saying that
$\psi_F$ is, in an asymptotic (decay) sense, `almost' in the nullspace of both $\re P$ and of
$\im P$, hence both of $H-\lambda-\alpha^2$ and $x^2D_x$.

If, moreover, $(\psi_F,\sqi [\re P,\im P]\psi_F)$ is positive, modulo
terms involving $\re P$ and $\im P$ (which can be absorbed in the
squares in \eqref{eq:comm-8-0}), and terms of the form $(\psi_F,R\psi_F)$,
$R\in x^{1+\delta}\Diffscc^{*,0}(\Xb)$ (with $*$ showing that the
differential order is irrelevant due to elliptic regularity), which are thus bounded by $C_2\|x^{(1+\delta)/2}\psi_F\|^2$,
then the factor $x^{(1+\delta)/2}$ (which has an extra $x^{\delta/2}$ compared to
$\|x^{1/2}\psi_F\|$) yields easily a bound for $\|x^{1/2}\psi_F\|$ in
terms of $\|\psi\|$. This gives estimates for the
norm $\|x^{1/2}\psi_F\|$, uniform both in $F$ and in $\psi$. A regularization
argument in $F$ then gives the exponential decay of $\psi$.

The positivity of $(\psi_F,\sqi [\re P,\im P]\psi_F)$, in the sense described
above, is easy to see if we replace $\sqi [\re P,\im P]$ by
$\sqi [H-\lambda-\alpha^2,-2\alpha x^2D_x]$: this commutator is a standard one
considered in $N$-body scattering, although the even more usual one
would be $\sqi [H-\lambda-\alpha^2,-2xD_x]$, whose local positivity
in the spectrum of $H$ is the Mourre estimate
\cite{Mourre:Operateurs, Perry-Sigal-Simon:Spectral,
FroMourre}. Indeed, the latter
commutator is the one considered by Froese and Herbst in Euclidean
$N$-body potential scattering, and we could adapt their argument
(though we would need to deal with numerous error terms) to our setting.
However, the argument presented here is more robust, especially in the
high energy sense discussed below, in which their approach would not
work in the generality considered here. There is one exception: for
$\alpha=0$, $\im P$ degenerates, and in this case we need to `rescale'
the commutator argument, and consider $\sqi [H-\lambda-\alpha^2,-2xD_x]$
directly.

We next want to let
$\alpha\to\infty$. Since most of the related literature considers
``semiclassical problems'', we let $h=\alpha^{-1}$, and replace $P$ above
by $P_h=h^2P$, which is a semiclassical differential operator, $P_h
\in\Diff_{\sccl,h}^2(\Xb)$. Here
$\Diff_{\scl,h}(\Xb)$ is the algebra of semiclassical scattering
differential operators discussed, for example, in
\cite{Vasy-Zworski:Semiclassical} in this setting (see
\cite{Zworski:Semiclassical} for a general introduction to
semiclassical analysis), and $\Diff_{\sccl,h}(\Xb)$ is its
conormal/symbolic coefficient version. The space $\Diff_{\scl,h}(\Xb)$ is generated by $h\Vsc(\Xb)$ over
$\Cinf(\Xb\times[0,1)_h)$.
In this semiclassical sense, the first and
zeroth order terms in $H$ do not play a role in $P_h$: their
contribution is in $h\Diff_{\scl,h}^1(\Xb)$, hence their contribution
to the commutator $\sqi [\re P_h,\im P_h]$ is in $xh^2\Diff_{\scl,h}(\Xb)$.
Moreover, at infinity $\sqi [\re P_h,\im P_h]$ is close
to the corresponding commutator with $P_h$ replaced by $h^2(e^F \Delta_{g_0}
e^{-F}-\lambda)$. Since in the latter case the commutator is
positive, modulo terms than can be absorbed in the
two squares in \eqref{eq:comm-8-0}, $\sqi [\re P_h,\im P_h]$ is also positive
for $g$ near $g_0$, which automatically holds near infinity (where
this is relevant). This gives an estimate as above, from which
the vanishing of $\psi$ near $x=0$ follows easily.

We remark that the estimates we use are related
to the usual proof of unique continuation at infinity on $\Rn$
(i.e.\ not
in the $N$-body setting), see \cite[Theorem~17.2.8]{Hor}, and to
H\"ormander's solvability condition for PDE's in terms of the
real and imaginary parts of the principal symbol.
Indeed, although
in \cite[Theorem~17.2.8]{Hor} various changes of coordinates are used first,
which change the nature of the PDE at infinity,
ultimately the necessary estimates also arise from a commutator of the
kind $\sqi [\re P,\im P]$. However, even in that setting,
the proof we present appears more natural
from the point of view of scattering than the one presented there,
which is motivated by unique continuation at points in $\Real^n$.
We remark
that related estimates, obtained by different techniques, form the
backbone of the
unique continuation results of Jerison and Kenig
\cite{Jerison-Kenig:Unique, Jerison:Carleman}.

The true flavor of our arguments is most
clear in the proof of the unique continuation theorem,
Theorem~\ref{thm:unique}. The reason is that on the one hand there is
no need for regularization of $F$, since we are assuming super-exponential
decay, on the other hand the positivity of $\sqi [\re P_h,\im P_h]$ is
easy to see.

The structure of this part of our work is the following. In Section~\ref{sec:prelim} we discuss
various preliminaries, including the structure of the conjugated
Hamiltonian and a Mourre-type global positive commutator estimate.
In Section~\ref{sec:poly} we prove polynomial, and then in
Section~\ref{sec:exp} the exponential decay of eigenfunctions with $\lambda>0$.
In Section~\ref{sec:unique}, we prove the unique continuation theorem at
infinity.
We emphasize that the presence of bundles such as $E$ makes
no difference in the discussion, hence they are ignored in order to
keep the notation manageable.

\subsection{Preliminaries}\label{sec:prelim}

We first remark that, for the metrics $g$ under consideration,
the Riemannian measure density takes the form
\begin{equation}\label{eq:dg-form}
 dg=\sqrt{\det (g_{ij})}\,dx\,dy=\tilde g\,\frac{dx\,dy}{\red{x^{n+1}}}\,,\   n=\dim \red{M},
 \ \tilde g\in\Cinf(\Xb)+S^{-\delta}(\Xb)
  \,.
\end{equation}
By our conditions \eqref{9XII17.1} on the form of $g$, the Laplacian
takes the following form
\begin{equation}
 \label{P9XII17.2}
\Delta_g=(x^2 D_x)^2
  +\sum_j b_j x^2 P_j+x^{\delta}R
\end{equation}
with $
b_j\in\Cinf(\Xb)$,
$P_j\in\Diff^2(\bXb)$, $R\in S^{0}\Diffsc^2(\Xb)$, and with all sums over finite sets of indices.  (Recall that $D_x = \frac 1{\sqi } \partial_x$.)
Hence, $H=\Delta_g+V$ takes the form
\begin{equation*}
H=(x^2 D_x)^2+\sum_j b'_j x^2 P'_j+x^{\delta}R',
\end{equation*}
with $b'_j\in\Cinf(\Xb)$,
$P'_j\in\Diff^2(\bXb)$, $R'\in \Diffsc^2(\Xb)$.
%

Below we consider the conjugated Hamiltonian $H(F)=e^{F}He^{-F}$, where
$F$ is a symbol of growth  order $1$. The exponential weights will facilitate
exponential decay estimates, and eventually the proof of unique
continuation at infinity.
Let $x_0=\sup_{\Xb} x$. By altering $x$ in a compact subset of $\red{M}$, we
may assume that $x_0<1/2$; we do this for the convenience of notation below.
We let
$S^m([0,1)_x)$ be the space of all symbols $F$ of order (growth rate) $m$ on $[0,1)$,
which satisfy $F\in\Cinf((0,1))$, vanish on $(1/2,1)$, and for which
$\sup_{x\in(0,1)}|x^{m+k}\partial_x^k F|<\infty$ for all $k$. The topology of $S^m
([0,1))$ is given by the seminorms $\sup|x^{m+k}\partial_x^k
F|$. Also, as already mentioned,
the spaces $S^m(\Xb)$ of symbols is defined
similarly, i.e.\ it is
given by seminorms $\sup_{\Xb}|x^m P F|$, $P\in\Diffb^k(\Xb)$.

We have:

\begin{lemma}\label{lemma:FH-1}
    Suppose $\lambda\in\Real$,
$H\psi=\lambda\psi$, $\psi\in L^2_{\scl}(\Xb)$.
Then with $F \in S^1([0,1))$,
$F\leq\alpha/x+\beta|\log x|$
for some $\beta$, $\supp F\subset [0,1/2)$,
$\psi_F=e^F\psi$,
\begin{equation*}
 P\equiv P(F):=e^F(H-\lambda)e^{-F}=H(F)-\lambda,\ H(F)=H+e^F[H,e^{-F}],
\end{equation*}
we have
%
\begin{equation}\label{eq:FH-1}
P(F)\psi_F=0,
\end{equation}
\begin{equation}
 P(F)   =
   H-2(x^2D_x F)(x^2D_x)+(x^2D_x F)^2%
   -\lambda+x^\delta R_1
 \,,\quad R_1\in
\Diffscc^2(\Xb),
\end{equation}
with
\begin{equation}
 \label{eq:FH-1b}
  \re P(F)=H+(x^2D_x F)^2-\lambda+x^\delta R_2,\ \im P(F)=2(x^2\pa_x
 F)(x^2D_x)+x^\delta R_3
 \,,
\end{equation}
$R_2,R_3\in\Diffscc^2(\Xb)$, $R_j$ bounded as long as
$x^2\pa_x F$ is
bounded in $S^0([0,1))$, hence as long as
$F$ is bounded in $S^1([0,1))$.
The coefficients of the terms $x^\delta R_2$, $x^\delta R_3$ are in fact polynomials with vanishing
constant term, in $(x^2\pa_x)^{m+1} F$, $0\le m\leq 1$.

Furthermore,
\begin{equation}
  \label{eq:FH-3}
 \sqi [\re P(F),\im P(F)]=
  \sqi [H+(x^2D_x F)^2,2(x^2\pa_x F)(x^2D_x)]
 + x^{1+ \rdelta}R_4,
\end{equation}
where $R_4\in \Diffscc^2(\Xb)$ is bounded as long as $x^2\pa_x F$
is bounded
in $S^0([0,1))$.%
\end{lemma}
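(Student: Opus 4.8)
The identities are algebraic consequences of the structure \eqref{P9XII17.2} of $H$ together with the fact that $F$ depends on $x$ only; I would therefore proceed by direct computation, the real work being to keep track of weights and differential orders. First, \eqref{eq:FH-1} is immediate: since $H\psi=\lambda\psi$ and $e^{\pm F}$ are real multiplication operators, $P(F)\psi_F=e^F(H-\lambda)e^{-F}e^F\psi=e^F(H-\lambda)\psi=0$. The remaining assertions are operator identities, independent of $\psi$; the hypotheses $\psi\in L^2_{\scl}(\Xb)$ and $F\le\alpha/x+\beta|\log x|$ only guarantee that $\psi_F$ lies in a space on which $P(F)$ acts, so that \eqref{eq:FH-1} is meaningful.

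To get the formula for $P(F)$, write $H=(x^2D_x)^2+\sum_j b_j'x^2P_j'+x^\delta R'$ as in \eqref{P9XII17.2}, with $P_j'\in\Diff^2(\bXb)$ and $R'\in\Diffsc^2(\Xb)$. Since $F=F(x)$, it commutes with every $P_j'$, so those terms are untouched by conjugation. For the scattering vector field one has the elementary identity $e^F(x^2D_x)e^{-F}=x^2D_x-(x^2D_xF)$, whence
\[
 e^F(x^2D_x)^2e^{-F}=\big(x^2D_x-(x^2D_xF)\big)^2=(x^2D_x)^2-2(x^2D_xF)(x^2D_x)+(x^2D_xF)^2-\big(x^2D_x(x^2D_xF)\big).
\]
Conjugating the block $x^\delta R'$ produces corrections of differential order $\le1$ whose coefficients are polynomials, with vanishing constant term, in $(x^2\pa_x)F$; all of these stay in $x^\delta\Diffscc^2(\Xb)$. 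Because $F\in S^1([0,1))$, the functions $(x^2\pa_x)^mF$ are bounded in $S^0$ for $m=1,2$, and the scalar term $x^2D_x(x^2D_xF)$ lies in $S^{-1}\subset x^\delta S^0$ since $\delta\le1$. Collecting terms gives $P(F)=H(F)-\lambda$ in the stated form with $R_1\in\Diffscc^2(\Xb)$.

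For \eqref{eq:FH-1b} I would use that $H=H^*$ and $e^{\pm F}$ are self-adjoint, so $P(F)^*=e^{-F}(H-\lambda)e^F=H(-F)-\lambda$; thus $P(F)^*$ is obtained from $P(F)$ by $F\mapsto-F$, under which $(x^2D_xF)\mapsto-(x^2D_xF)$. Consequently $\re P(F)=\tfrac12\big(P(F)+P(F)^*\big)$ keeps exactly the terms of $H(F)$ that are even in $(x^2D_xF)$, namely $H$, the real bounded multiplication operator $(x^2D_xF)^2$ (which equals $-\alpha^2$ when $F=\alpha/x$), and the even part of the $x^\delta R'$-corrections, giving $\re P(F)=H+(x^2D_xF)^2-\lambda+x^\delta R_2$; and $\im P(F)=\tfrac1{2\sqi}\big(P(F)-P(F)^*\big)$ keeps the odd terms, each divided by $\sqi$. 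Since $\tfrac1\sqi\big(-2(x^2D_xF)\big)=2(x^2\pa_xF)$, the leading odd term yields $2(x^2\pa_xF)(x^2D_x)$, while the remaining odd contributions --- the scalar $O(x)$ term coming from $x^2D_x(x^2D_xF)$ and the odd $x^\delta R'$-corrections --- are absorbed into $x^\delta R_3$. In both cases the coefficients of $R_2,R_3$ are polynomials, vanishing at $F\equiv0$, in $(x^2\pa_x)^{m+1}F$ for $0\le m\le1$, hence bounded as long as $F$ is bounded in $S^1([0,1))$.

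Finally, for \eqref{eq:FH-3} expand $\sqi[\re P(F),\im P(F)]$ bilinearly using $\re P(F)=(H+(x^2D_xF)^2-\lambda)+x^\delta R_2$ and $\im P(F)=2(x^2\pa_xF)(x^2D_x)+x^\delta R_3$; the constant $\lambda$ drops out of every commutator, leaving the principal term $\sqi[H+(x^2D_xF)^2,\,2(x^2\pa_xF)(x^2D_x)]$ together with three error commutators, each containing a factor $x^\delta R_j\in S^{-\delta}\Diffsc^{\le2}$. By the filtered-algebra commutator estimate \eqref{13XII17.11}, and because $[H,x^\delta]$ gains a full power of $x$ (each scattering vector field $x^2\pa_x$ sends $x^\delta$ to $x^{\delta+1}$), each of these three terms lies in $S^{-1-\delta}\Diffsc^{\le2}=x^{1+\delta}\Diffscc^2(\Xb)$, with constants controlled by $\|x^2\pa_xF\|_{S^0([0,1))}$; this is \eqref{eq:FH-3}. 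The one genuinely delicate point throughout is this bookkeeping --- verifying that the remainders carry the asserted weights, in particular the \emph{two} gained powers $x^{1+\delta}$ in the commutator (one from the commutator structure, one from the $O(x^\delta)$ size of the coefficient errors), and that every constant is uniform in $F$ while $F$ ranges over bounded sets of $S^1([0,1))$, since it is precisely this uniformity that the later regularization argument in $F$ will exploit.
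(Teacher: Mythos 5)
Your proof is correct and amounts to the same computation as the paper's, the only genuine difference being how you identify $\re P(F)$ and $\im P(F)$. The paper works with $\frac12(P+P^*)$ and $\frac1{2\sqi}(P-P^*)$ directly, tracking the intermediate operators $B_1, B_2$ in \eqref{17XII17.5}, their near-self-adjointness \eqref{17XII17.6}, and $\tilde B=\tfrac12(B_1+B_2^*)$ with $\tilde B-x^2D_x\in x^\delta\Diffscc^1(\Xb)$, in order to arrive at the formula \eqref{eq:imP-formula} for $\im P$. You instead observe the operator identity $P(F)^*=e^{-F}(H-\lambda)e^F=P(-F)$ and split the explicit expansion of $P(F)$ by parity under $F\mapsto -F$; this is slick and avoids the adjoint bookkeeping (e.g.\ the $x\Diffscc^0$ discrepancy between $(x^2D_x)^*$ and $x^2D_x$ relative to the metric measure $dg$), since the identity $P(F)^*=P(-F)$ holds exactly. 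One small bookkeeping point worth making explicit: to conclude $R_4\in\Diffscc^2(\Xb)$ in \eqref{eq:FH-3} from \eqref{13XII17.11} you need $\im P(F)$ to be a \emph{first}-order operator (otherwise the mixed commutator $[Q_1,Q']$ would a priori only land in $\Diffscc^3$); you do have this, since the leading term $2(x^2\pa_xF)(x^2D_x)$ and the error in $x^\delta R_3$ (scalar terms from $x^2D_x(x^2D_xF)$ plus first-order conjugation corrections of $x^\delta R'$) are all of order $\le 1$, but the stated membership $R_3\in\Diffscc^2(\Xb)$ does not by itself capture this, so it is worth flagging. (The paper's \eqref{eq:imP-formula} makes this first-order structure explicit, which is essentially what the $B_j,\tilde B$ machinery is for.)
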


\begin{rem}\label{rem:FH}
{\rm
The presence of bundles $E$
leaves \eqref{eq:FH-1b} unaffected, hence
\eqref{eq:FH-3} holds as well.
}
\end{rem}

\begin{proof}
First note that
\begin{equation}\label{eq:FH-p1}
[x^2D_x, e^{-F}]=-(x^2 D_x F)e^{-F}
 \,,\qquad x^2 D_x F\in S^0([0,1))
 \,,
\end{equation}
so
$e^F[H,e^{-F}]\in\Diffscc^1(\Xb)$ and
indeed expanding $H$ in terms of $(x^2D_x)^2$, $(x^2D_x)(xD_{y_j})$,
etc., we have
\begin{equation}\label{17XII17.5}
 e^F[H,e^{-F}]=-(x^2D_x F) B_1-B_2(x^2 D_x F)
 \,,
\end{equation}
with
$$
B_j-(x^2D_x)\in x^{\rdelta}\Diffscc^1(\Xb).
$$
The dependence of the terms of
$P(F)$ on $F$ thus comes from $x^2 D_x F$, and its commutators via
commuting it through other vector fields (as in rewriting
$(x^2D_x)(x^2 D_x F)$ as $(x^2 D_x F)(x^2D_x)$ plus a commutator term), hence
through $(x^2 D_x)^{m+1} F$, $0\le m\le 1$.
Notice that  writing $H$ as
$$
(x^2 D_x)
a_{00}(x^2D_x)+\sum_j\Big((x^2D_x)a_{0j}
(xD_{y_j})+(xD_{y_j})a_{0j}(x^2D_x)\Big)
$$
modulo terms without factors of $x^2D_x$
and modulo
$x\Diffscc^1(\Xb)$,
where all commutator terms end up after the rearrangements, we find
$$
B_1=a_{00}(x^2D_x)+a_{0j}(xD_{y_j})
 +xR'_1
 \,,
$$
with $R'_1$ bounded in $\Diffscc^1(\Xb)$, and thus with
$a_{00},a_{0j}$ being real, with
\begin{equation}\label{17XII17.6}
B_1-B_1^*\in x \Diffscc^1(\Xb)
\,,
\end{equation}
and similarly for $B_2$.

We use
%
\begin{equation}\begin{aligned}\label{eq:imP-formula}
\im
P(F)=&\frac{1}{2i}(P(F)-P(F)^*)=\frac{1}{2i}(e^F[H,e^{-F}]+[H,e^{-F}]e^F)\\
&=2\Big((x^2
\pa_x F)\tilde B+\tilde B^*(x^2\pa_x F)\Big),\\
&\tilde B-x^2D_x\in x^{\rdelta}\Diffscc^1(\Xb),\ \tilde B-\tilde
B^*\in x \Diffscc^1(\Xb),
\end{aligned}\end{equation}
with $\tilde B=\half(B_1+B_2^*)$,
and
\begin{equation*}\begin{split}
\re P(F)=\half(P(F)+P(F)^*)&=H-\lambda+\half (e^F[H,e^{-F}]-[H,e^{-F}]e^F)\\
&=H-\lambda+\half[e^F,[H,e^{-F}]]
\end{split}\end{equation*}
to prove \eqref{eq:FH-1b} (note that only the $(x^2D_x)^2$ terms in $H$ gives
a non-vanishing contribution to the double commutator).

To prove \eqref{eq:FH-3}, set
\begin{eqnarray*}
    Q & = & \re P(F)-\underbrace{(H+(x^2D_x F)^2-\lambda)}_{=:Q_1}\in
        x^\rdelta\Diffsc^2(\Xb)
         \,,
\\
          Q'
           & = & \im P(F)-\underbrace{2(x^2\pa_x F)(x^2D_x)}_{ =: Q_1'} \in
        x^\rdelta\Diffsc^2(\Xb)
           \,.
\end{eqnarray*}
We can write
\begin{eqnarray*}
  [\re P(F),\im P(F)]  &\equiv&  [Q_1+Q,Q_1'+Q'] \\
    &=& [Q_1,Q_1'] +[Q_1 ,Q']
  + [Q,Q_1' + Q']
   \,.
\end{eqnarray*}
Equation~\eqref{13XII17.11} shows  that both last terms can be put in $R_4$.
\qed\medskip\end{proof}
%
%

In light of \eqref{eq:FH-3}, we need a positivity result for $\sqi [x^2D_x,H]$.
Such a result follows directly from a Poisson bracket computation.
Let $\chi\in\Cinf_c([0,1))$ be supported near $0$, identically $1$ on
a smaller neighborhood of $0$, and let
\begin{equation}
 \label{4XII17.1}
B=\half(\chi(x) x^2D_x+(\chi(x) x^2D_x)^*)
\end{equation}
be the symmetrization of the radial vector field. Here the formal adjoint is taken with respect to the metric measure on $\red{M}$, which deserves some comments:
Indeed, $x^2D_x$ is formally self-adjoint with respect to the measure
$\frac{dx\,dy}{x^2}$,
and if $C$ is formally
self-adjoint with respect to a density $dg'$ then its adjoint
with respect to $\alpha\,dg'$, $\alpha$ smooth real-valued,
is $\alpha^{-1}C\alpha=C+\alpha^{-1}[C,\alpha]$.
In the notation of \eqref{eq:dg-form}, using
$xD_x (x^{-n+1}\tilde g)\in x^{-n+1}(\CI(\Xb)+S^{-\delta}(\Xb))$,
we find
\begin{equation*}
ib:=x^{-1}(B-\chi(x) x^2D_x)\in \Cinf(\Xb) +S^{-\delta}(\Xb)
\end{equation*}
and $b$ is real-valued. It is easy to check that
$b|_{\pa\red{M}}=\frac{n-1}{2}$,
%
%
where $n=\dim \red{M}$.

For the next theorem we also introduce
\begin{equation}\label{4XII17.2}
A=\half(\chi(x) xD_x+(\chi(x) xD_x)^*)
 \,.
\end{equation}

Recall from \cite[Equation~(4.2)]{RBMSpec} that the space $\Psiscc^{s,r}(\Xb)$ of scattering pseudo-differential operators
is locally defined using quantisations of product-type symbols satisfying estimates
\begin{equation}\label{3XII17.1}
 | \partial^\alpha_z \partial^\beta_\zeta a(z,\zeta) |\le
  C(\alpha,\beta) (1+|z|)^{r-|\alpha|} (1+|\zeta|)^{s-|\beta|}
  \,,
\end{equation}
where $z$ is thought as a local coordinate on the manifold and $\zeta$
the momentum variable. {\em Notice that we have the negative of the
  convention of \cite{RBMSpec} for the growth order $r$, so for us
  $r>0$ means growing coefficients.} (Parenti
\cite{Parenti:Operatori} and
Shubin~\cite{Shubin:Pseudodifferential} introduced this class earlier
on $\RR^n$.)  Here `locally' means a coordinate
identification of an open set on the manifold with boundary with an
open set on the radial compactification $\overline{\RR^n}$ of $\RR^n$
discussed at
the end of Section~\ref{s25X17.1}.
Near $\pa\overline{\RR^n}$, since in that region $\overline{\RR^n}$ is identified with
$[0,\ep)_x\times\sphere^{n-1}$, the coordinate identification is thus with the
closure of an asymptotically conic
subset of $\RR^n$.

On the other hand, $\Psisc^{s,r}(\Xb)$ is the subspace of
$\Psiscc^{s,r}(\Xb)$ consisting of classical
pseudodifferential operators, see
\cite[Equation~(4.7)]{RBMSpec}. Here classical means, in terms of the
local description above, that the symbol
$a$ has a one-step (i.e.,  with powers in the expansions stepping by one)
 polyhomogeneous expansion both in terms of the
defining function of spatial infinity, $|z|^{-1}$, and the defining
function of momentum infinity, $|\zeta|^{-1}$, cf.\ the discussion at
the end of Section~\ref{s25X17.1}. This joint behavior can again be
encoded via a compactification. We compactify the momentum
variable $\zeta$ similarly to the position variable $z$, so the amplitude $a$ is considered as a function
on the interior $\RR^n_z\times\RR^n_\zeta$ of $\overline{\RR^n}\times
\overline{\RR^n}$. For $s=r=0$, classicality means that $a$ extends
(necessarily uniquely) as a smooth function to $\overline{\RR^n}\times
\overline{\RR^n}$; for general $s$, $r$, the statement is the
analogous extendability for
$$
(1+|z|^2)^{-r/2}(1+|\zeta|^2)^{-s/2} a;
$$
the expression $(1+|\cdot|^2)^{1/2}$ is used in place of $1+|\cdot|$ to ensure smoothness at
the origin.

Taking into account that differential operators have amplitudes that
are polynomial, thus classical, in the momentum variable, these
definitions are consistent with those of
$\Diffsc^{s,-r}(\Xb)\subset\Psisc^{s,r}(\Xb)$ (notice the change in
sign!) and
$S^r\Diffsc^s(\Xb)=\Diffscc^{s,-r}(\Xb)\subset\Psiscc^{s,r}(\Xb)$, for $s$ a non-negative integer.

\begin{prop}\label{prop:Mourre-est}
Let $A$ and $B$ be given by \eqref{4XII17.1}-\eqref{4XII17.2}.
There exist $R \in x \Psisc^{0,0}(\Xb)$ and $ K\in x^{1+\rdelta}\Psiscc^{2,0}(\Xb)$
such that
\begin{equation}\label{eq:Mourre-ch}
\sqi [B,H]= 2\lambda x-
 2
 BxB+(H-\lambda)R+R^*(H-\lambda)+ K.
\end{equation}
In addition, there exist
$\tilde R\in \Psisc^{0,0}(\Xb)$,
$\tilde K  \in x ^\rdelta \Psiscc^{2,0}(\Xb)$, such that
%
\begin{equation}\label{eq:Mourre-0}
\sqi [A,H] = 2\lambda+(H-\lambda)\tilde R+\tilde R^*(H-\lambda)+ \tilde K.
\end{equation}
\end{prop}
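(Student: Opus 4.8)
The plan is to prove both identities by a direct commutator computation built on the normal form \eqref{P9XII17.2}, term by term, while tracking the order of vanishing at $x=0$ so that all genuinely lower-order contributions are thrown into the stated error terms; this makes precise the Poisson-bracket heuristic $\sigma(\sqi[B,H])=\{\sigma(B),\sigma(H)\}$ (and similarly for $A$). By \eqref{P9XII17.2}, $H=(x^2D_x)^2+\mathcal A+x^\delta R'$ with $\mathcal A=\sum_j b'_j x^2 P'_j$ ($b'_j\in\Cinf(\Xb)$, $P'_j\in\Diff^2(\bXb)$) and $R'\in\Diffsc^2(\Xb)$, the remainder $x^\delta R'$ also absorbing $V\in\Diffscc^{1,\delta}$ and any first-order $x(x^2D_x)$-type terms. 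Recall $B\in\Diffscc^{1,0}$ and $A\in\Diffscc^{1,-1}$, so that by the composition rule \eqref{13XII17.11} the commutator $\sqi[B,\cdot]$ gains one more power of $x$ than $\sqi[A,\cdot]$ — this is exactly why $K$ is required only at order $x^{1+\delta}$ whereas $\tilde K$ is allowed at order $x^\delta$.

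Next I compute each commutator. Using $[x^2D_x,(x^2D_x)^2]=0$, $[x^2D_x,x^2]=-2\sqi x^3$, $[xD_x,x^2D_x]=-\sqi x^2D_x$, $[xD_x,x^2]=-2\sqi x^2$, the vanishing of $[\chi,\mathcal A]$ (since $\chi=\chi(x)$ commutes with the $y$-operators and functions making up $\mathcal A$), and the fact that commutators with $\chi$ are supported away from $x=0$, one finds: $\sqi[B,(x^2D_x)^2]$ and $\sqi[B,x^\delta R']$ lie in $K$, while $\sqi[B,\mathcal A]\equiv 2x\chi\,\mathcal A$ modulo $K$; and $\sqi[A,x^\delta R']$ lies in $\tilde K$, while $\sqi[A,(x^2D_x)^2]\equiv 2\chi(x^2D_x)^2$ and $\sqi[A,\mathcal A]\equiv 2\chi\,\mathcal A$ modulo $\tilde K$ (the zeroth-order correction terms in $B$ and $A$ contribute only to the errors, by the same order count). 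Since $\mathcal A=H-(x^2D_x)^2-x^\delta R'$, summing gives, modulo $K$ resp.\ $\tilde K$,
\[
\sqi[B,H]\equiv 2x\chi\,H-2x\chi(x^2D_x)^2 ,\qquad \sqi[A,H]\equiv 2\chi\,H .
\]

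It remains to rewrite the right-hand sides. Write $2x\chi\,H=2x\chi(H-\lambda)+2\lambda x\chi$; the symmetrization $2x\chi(H-\lambda)=(H-\lambda)(x\chi)+(x\chi)(H-\lambda)-[H,x\chi]$ costs $[H,x\chi]\in x^2\Diffsc^1\subset K$ (here $0<\delta\le1$ enters), and $2\lambda x\chi\equiv 2\lambda x$ since $\chi\equiv1$ near $\partial\Xb$; moreover $BxB\equiv x(x^2D_x)^2$ modulo $K$ (the cross-terms with the $ibx$ part are $O(x^2)$ and $\chi^2\equiv1$ near $\partial\Xb$), so $-2x\chi(x^2D_x)^2\equiv -2BxB$. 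This yields \eqref{eq:Mourre-ch} with $R=x\chi\in x\Psisc^{0,0}$. The same manipulation with $x\chi$ replaced by $\chi$ rewrites $2\chi\,H\equiv 2\lambda+(H-\lambda)\chi+\chi(H-\lambda)$, giving \eqref{eq:Mourre-0} with $\tilde R=\chi\in\Psisc^{0,0}$. There is no ``$-2AxA$'' term in the latter: for $A$ the term $(x^2D_x)^2$ recombines with $\mathcal A$ into $2\chi H$ because $\sqi[A,(x^2D_x)^2]\not\equiv0$, whereas $\sqi[B,(x^2D_x)^2]\equiv0$ forces the extra $-2x\chi(x^2D_x)^2$.

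The one non-routine point is the order bookkeeping of the second step: one must verify that each discarded commutator truly lands in $x^{1+\delta}\Psiscc^{2,0}$ for the $B$-identity and in $x^\delta\Psiscc^{2,0}$ for the $A$-identity. This rests on $\delta\le1$ — so that the unavoidable $O(x^2)$ errors (for instance $[H,x\chi]$ and the $ibx$ cross-terms) are dominated by $x^{1+\delta}$ — and on the fact that the first-order $x(x^2D_x)$ correction inside $H$ has a commutator with $A$ of size only $O(x)$, not $O(x^{1+\delta})$; this is precisely why the error $\tilde K$ in \eqref{eq:Mourre-0} cannot be claimed better than $x^\delta$, which is, however, all that is needed.
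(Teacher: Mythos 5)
Your proof is correct and follows the same route as the paper's, which also proceeds by a principal-symbol/commutator computation and absorbs all $S^{-\delta}$ and $O(x^2)$ contributions into $K$ (resp.\ $\tilde K$). You simply make the paper's ``straightforward principal symbol computation'' fully explicit by splitting $H$ via the normal form \eqref{P9XII17.2} and treating each term, arriving at the same conclusion $\sqi[B,H]\equiv 2\lambda x - 2BxB + (H-\lambda)R + R^*(H-\lambda)$ with $R = x\chi$, and similarly for $A$.
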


\begin{rem}\label{rem:Mourre}
{\rm
Again, the presence of bundles makes no difference in this proof.
}
\end{rem}

\begin{proof}
We show \eqref{eq:Mourre-ch}; the proof of \eqref{eq:Mourre-0} is
entirely analogous.

Adding terms to $H$ which differ from it by an element of
$S^{-\delta}\Diffsc^2(\Xb)=\Diffscc^{2,\delta}(\Xb)$, results in a term in the commutator
in $ x^{1+\delta}\Diffsc^2(\Xb)
 $ that
can be absorbed into $K$, and similarly for adding a term in
$\Diffscc^{1,\delta}(\Xb)$ to $B$. Thus, all of the terms arising from
$S^{-\delta}$ terms in either $g$ or $V$ can be ignored. A
straightforward principal symbol computation (which recall is modulo
one order gain {both} in derivatives and   decay) then gives that (modulo
these $S^{-\delta}$ terms which give $K$-absorbable results) the
principal symbol of the left hand side is the same as that of
$xH+Hx-
2
BxB$,
so that of $2x\lambda-
2
 BxB+x(H-\lambda)+(H-\lambda)x$, proving the
proposition since the agreement of the principal symbols means that
the operators agree modulo $x^2\Diffsc^{2,0}(\Xb)$, which again is
absorbable into $K$.
\qed\medskip\end{proof}

We also need a somewhat more general computation. For this, let $k>0$,
and we note
first that, with $t\in(0,1]$ a parameter, $(1+t/x)^{-k}=(x/(x+t))^k$, as a function of $x$, is a
symbol of order $-k$ for $t>0$, and is uniformly bounded in symbols of
order $0$. Indeed, this follows from
%
%
$$
x\pa_x (1+t/x)^{-k}=k(t/x)(1+t/x)^{-k-1}=k(1+t/x)^{-k}-k (1+t/x)^{-k-1},
$$
so iterative regularity under derivatives is easy to check, and from
$(1+t/x)^{-k}\leq 1$ and for $t>0$, $(1+t/x)^{-k}\leq t^{-k}
x^k$. Thus, $(1+t/x)^{-k}x^2 D_x$ is in $S^{-k}\Diffscc^1(\Xb)$ for
$t>0$, and is uniformly bounded in $S^{0}\Diffscc^1(\Xb)$, converging
to $x^2 D_x$ in $S^{\delta'}\Diffscc^1(\Xb)$ for $\delta'>0$. This
proves the first part of the following proposition.

\begin{prop}\label{prop:poly-weight}
For $t\in[0,1]$, let
$$
B_{s,k,t}=\half((\chi(x) x^{-s}(1+t/x)^{-k}x^2 D_x)+(\chi(x) x^{-s}(1+t/x)^{-k}x^2 D_x)^*).
$$
Then for $t>0$, $B_{s,k,t}\in S^{s-k}\Diffsc^1(\Xb)$, and
$\{B_{s,k,t}:\ t\in[0,1]\}$ is uniformly bounded in
$S^s\Diffsc^1(\Xb)$, with $B_{s,k,t}\to B_{s,k,0}$ in
$S^{s+\delta'}\Psiscc^{1,0}(\Xb)$, $\delta'>0$ arbitrary, as $t\to 0$.

Furthermore, with $\hat R_{s,k,t}$ uniformly bounded in
$S^{s-1}\Diffsc^{0,0}(\Xb)$, $\hat K_{s,k,t}, \hat K'_{s,k,t}$ uniformly bounded in
$
{S^{s-1-\delta}\Diffsc^{2,0}(\Xb)}$,
%
%
\begin{equation}
 \label{10XII17.1}
 \begin{aligned}
    \sqi [B_{s,k,t},H]&=2x^{1-s}(1+t/x)^{-k}\Big(\Big(s-k\frac{t/x}{1+t/x}\Big)(x^2D_x)^2+\Delta_h\Big)+\hat
    K'_{s,k,t}
\\
    &=2x^{1-s}(1+t/x)^{-k}\lambda+2(x^2D_x)^*x^{1-s}(1+t/x)^{-k}
     \Big(s-1-k\frac{t/x}{1+t/x}\Big)
    (x^2D_x)
\\
    &\qquad\qquad+(H-\lambda)\hat R_{s,k,t}+\hat R_{s,k,t}^*(H-\lambda)+\hat K_{s,k,t}
     \,.
 \end{aligned}
\end{equation}
Thus, if $s-k\geq 1$,
\begin{equation*}\begin{aligned}
\sqi [B_{s,k,t},H]&\geq 2x^{1-s}(1+t/x)^{-k}\lambda+(H-\lambda)\hat R_{s,k,t}+\hat R_{s,k,t}^*(H-\lambda)+\hat K_{s,k,t}.
\end{aligned}\end{equation*}
\end{prop}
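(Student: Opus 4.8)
The first assertion of the proposition has already been established; the plan for the remaining (``Furthermore'') part is a weighted version of the principal-symbol/commutator computation used in the proof of Proposition~\ref{prop:Mourre-est}. Throughout I write $Z=x^2D_x$, $w=w_{s,k,t}(x)=\chi(x)\,x^{-s}(1+t/x)^{-k}$ and $w_1=x^{1-s}(1+t/x)^{-k}$, so that $w_1=xw$ near $\partial\Xb$. First I would reduce $H$ to a model operator: by the commutator gain \eqref{13XII17.11} and the uniform bound $B_{s,k,t}\in S^s\Diffsc^1(\Xb)$ from the first part, any $\Diffscc^{2,\delta}(\Xb)=S^{-\delta}\Diffsc^2(\Xb)$ piece of $g-g_0$ or of $V$ contributes to $\sqi[B_{s,k,t},H]$ a term uniformly bounded in $S^{s-1-\delta}\Diffsc^2(\Xb)$, hence absorbable into $\hat K'_{s,k,t}$; and every term produced by differentiating the cut-off $\chi$, or by the modification of $x$ on a compact set, is supported away from $\partial\Xb$, hence lies in $\dCinf(\Xb)\cdot\Diffsc^2(\Xb)$ and is harmless. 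So I may work with $H=Z^2+\Delta_h$, where $\Delta_h:=\sum_j b_j x^2 P_j$ (which we may take formally self-adjoint, the skew part being absorbable), and suppress $\chi$.

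The core is the computation of $\sqi[wZ,Z^2+\Delta_h]$. Since $w$ and the coefficients $b_jx^2$ are commuting multiplication operators we have $[w,\Delta_h]=0$, and a direct commutator calculation gives
\[
 \sqi[wZ,\Delta_h]=2xw\,\Delta_h+w\,x^2\!\sum_j(x^2\partial_xb_j)P_j
 \,,\qquad
 \sqi[wZ,Z^2]=-2x^2w'\,Z^2-\sqi(Z^2w)\,Z
 \,.
\]
Here the error $w\,x^2\sum_j(x^2\partial_xb_j)P_j$ (coming from $\partial_x$ hitting $b_j$ rather than $x^2$) lies in $S^{s-2}\Diffsc^2(\Xb)$, and $Z^2w\in S^{s-2}$, so both lie in $S^{s-1-\delta}\Diffsc^2(\Xb)$ because $\delta\le1$. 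The only calculation that matters is the differentiation of $w=x^{-s}(1+t/x)^{-k}$, which yields $-x^2w'=w_1\bigl(s-k\tfrac{t/x}{1+t/x}\bigr)$; combining this with $xw=w_1$ produces the first line of \eqref{10XII17.1}, with $\hat K'_{s,k,t}$ collecting the errors (uniformly in $t$, by the first part).

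Next I would pass to the second line. Put $a=s-k\tfrac{t/x}{1+t/x}$ and use \eqref{P9XII17.2} and the corresponding form of $H$, i.e.\ $Z^2+\Delta_h=H-x^\delta R'$, to write $aZ^2+\Delta_h=(a-1)Z^2+(H-\lambda)+\lambda-x^\delta R'$. Multiplying by $2w_1$, symmetrising $2w_1(H-\lambda)=w_1(H-\lambda)+(H-\lambda)w_1+[w_1,H]$ (the commutator $[w_1,H]$ is absorbable since $w_1\in S^{s-1}$, $[w_1,\Delta_h]=0$ and $[w_1,Z^2]\in S^{s-2}\Diffsc^1(\Xb)$), and rewriting $2w_1(a-1)Z^2=2Z^*\bigl(w_1(a-1)\bigr)Z$ modulo $2[Z^*,w_1(a-1)]Z\in S^{s-2}\Diffsc^1(\Xb)$ (using that $Z^*-Z$ is, by \eqref{eq:dg-form}, a multiplication operator, hence commutes with $w_1(a-1)$), I obtain exactly the second line of \eqref{10XII17.1}, with $\hat R_{s,k,t}=w_1=x^{1-s}(1+t/x)^{-k}$ uniformly bounded in $S^{s-1}\Diffsc^{0,0}(\Xb)$, the term $-2w_1x^\delta R'\in S^{s-1-\delta}\Diffsc^2(\Xb)$ and all leftover commutators going into $\hat K_{s,k,t}$. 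Finally, when $s-k\ge1$ we have $s-1-k\tfrac{t/x}{1+t/x}\ge s-1-k\ge0$ since $0\le\tfrac{t/x}{1+t/x}<1$; hence $w_1(a-1)\ge0$ and the operator $2(x^2D_x)^*\,x^{1-s}(1+t/x)^{-k}\bigl(s-1-k\tfrac{t/x}{1+t/x}\bigr)(x^2D_x)$ is nonnegative (of the form $Z^*\mu Z$ with $\mu\ge0$ real), so dropping it from \eqref{10XII17.1} yields the stated inequality.

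The step I expect to be the actual work is not any single computation but the uniform-in-$t$ bookkeeping: one must verify that the symmetrisation corrections for $B_{s,k,t}$ and for $Z$, the $S^{-\delta}$ parts of $g$ and $V$, the terms where $\partial_x$ lands on $b_j$, $\tilde g$ or $\chi$, and the commutators $[Z,w_1]$, $[Z^2,w]$, $[w_1,H]$ all lie in $S^{s-1-\delta}\Diffsc^2(\Xb)$ (resp.\ $S^{s-1}\Diffsc^{0,0}(\Xb)$ for $\hat R$) with bounds independent of $t\in[0,1]$. Uniformity is exactly what the first part of the proposition supplies: $(1+t/x)^{-k}$ is uniformly bounded in $S^0([0,1))$ together with all of its $x\partial_x$-derivatives, so every such estimate goes through with $t$-independent constants, and $\delta\le1$ guarantees that the extra $x^{\delta}$-type gains always dominate the losses. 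Once these routine inclusions are recorded, \eqref{10XII17.1} and the concluding inequality follow at once.
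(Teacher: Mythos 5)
Your proof is correct and supplies in full detail the principal-symbol computation that the paper compresses into a single sentence, following the same strategy: work modulo $S^{-\delta}$ and one-order-gain error terms, compute $\sqi[wZ,Z^2+\Delta_h]$ directly, and identify $-x^2 w'$ with $w_1\bigl(s-k\tfrac{t/x}{1+t/x}\bigr)$. A minor bookkeeping remark: when rewriting $w_1(a-1)Z^2$ as $Z^*w_1(a-1)Z$ the discrepancy is $[Z^*,w_1(a-1)]Z+w_1(a-1)(Z^*-Z)Z$ rather than just the commutator term, but the extra piece also lies in $S^{s-2}\Diffsc^1(\Xb)\subset S^{s-1-\delta}\Diffsc^2(\Xb)$ and is implicitly covered by the symmetrisation corrections you list in your final paragraph.
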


\begin{rem}
{\rm
The main point of this computation is that on the one hand
$s-k\frac{t}{x}\big(1+\frac{t}{x}\big)^{-1}\geq s-k$, thus is positive
with a positive lower bound if
$k<s$, on the other hand converges to $s$ in any growing symbol space,
$S^{\delta'}$, $\delta'>0$, as $t\to 0$.
}
\end{rem}

\begin{proof}
It only remains to do the commutator calculation.
Again, this is a principal symbol computation in which all of the terms arising from
$S^{-\delta}$ terms in either $g$ or $V$ can be ignored.
\qed\medskip\end{proof}
%
%

\subsection{Super-polynomial decay}\label{sec:poly}
As a starting point, we show that $L^2$ solutions decay
superpolynomially. From a microlocal analysis perspective, this
follows from propagation results in the framework of scattering
pseudodifferential operators as in \cite{RBMSpec}, cf.\ also the
arguments preceding \cite[Proposition~4.13]{Vasy:Minicourse}, but we give a more
elementary argument (under, however, stronger assumptions than those of
\cite{RBMSpec}).

We recall  the scattering Sobolev spaces
$\Hsc^{s,r}(\Xb)=x^{r}\Hsc^{s,0}(\Xb)$, which are modelled, via local
coordinate identification much as for the scattering
pseudodifferential operators, on the
standard weighted Sobolev spaces
$$
H^{s,r}(\RR^n)=\langle z\rangle^{-r} H^s(\RR^n)=\{u\in\cS'(\RR^n):\
\langle z\rangle^{r} u\in H^s(\RR^n)\}
\,,
$$
where
 $\langle z\rangle = (1+|z|^2)^{1/2}$.
Correspondingly, $\Hsc^{s,r}(\Xb)$ is the space of tempered
distributions $u$ on $\Xb$ (dual of $\dCI(\Xb)$, consisting of $\CI$
functions vanishing with all derivatives at $\pa \red{M}$) such that for all
$A\in\Psiscc^{s,r}(\Xb)$, $Au\in L^2_\scl(\Xb)$. This in turn is
equivalent to requiring $Au\in L^2_\scl(\Xb)$ for a {\em single fully
  elliptic} $A\in\Psiscc^{s,r}(\Xb)$; full ellipticity means that for the local coordinate
amplitude $a$, $|a|$ has a lower bound $c(1+|z|)^r(1+|\zeta|)^s$, $c>0$,
for $|z|+|\zeta|$ sufficiently large. For instance, with $\Delta$ the
positive Laplacian of a scattering metric, $(1+\Delta)^{s/2}$ is
elliptic in $\Psiscc^{s,0}(\Xb)$, and thus $x^{-r}(1+\Delta)^{s/2}$ is
elliptic in $\Psiscc^{s,r}(\Xb)$. Elliptic regularity in the
differential order sense, i.e.\ for an operator $A$ with principal
symbol satisfying $|a(z,\zeta)|\geq c(1+|z|)^r(1+|\zeta|)^s$, $c>0$,
for $|\zeta|$ sufficiently large (but not necessarily if $|z|$ is
large and $\zeta$ is in a bounded region), is the statement that
$Au\in\Hsc^{k-s,p-r}(\Xb)$ and $u\in \Hsc^{k',p}$ for some $k'$
implies $u\in\Hsc^{k,p}(\Xb)$, with a corresponding estimate on
$u$. In particular, there is a constant $C=C_{k,k',p}$ such that if $Au=0$ then $\|u\|_{\Hsc^{k,p}(\Xb)}\leq
C_{k,k',p}\|u\|_{\Hsc^{k',p}(\Xb)}$; note that there is an improvement
in the differentiable, but not in the decay order of the space.

As a first step it is useful to observe the following:

\begin{lemma}\label{lemma:sc-formal-adjoint-works}
Suppose that $Q\in S^{1+2\beta}\Psisc^{1,0}(\Xb)$ and $(H-\lambda)\psi=0$, $x^{-\beta}\psi\in
L^2_\scl(\red{M})$. Then
%
$$
( [Q,H]\psi,\psi)=0.
$$
\end{lemma}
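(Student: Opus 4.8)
The plan is to reduce the statement to the case of an operator with \emph{bounded coefficients}, where the formal‑adjoint identity $([Q_t,H]\psi,\psi)=-((H-\lambda)Q_t\psi,\psi)=-(Q_t\psi,(H-\lambda)\psi)=0$ is an honest integration by parts, and then to remove the regularization. First I would locate the objects. Since $(H-\lambda)\psi=0$ and $H-\lambda$ is elliptic in the differential sense (it is $(x^2D_x)^2$ plus lower‑order scattering terms, cf.\ \eqref{P9XII17.2}), elliptic regularity upgrades the hypothesis $x^{-\beta}\psi\in L^2_\scl$ to $\psi\in\Hsc^{\infty,\beta}(\red M)$, so $\psi$ and all its $g$‑derivatives are $O(x^\beta)$ in $L^2$. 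Because $Q\in S^{1+2\beta}\Psisc^{1,0}(\Xb)$ has growth order $1+2\beta$, one has $Q\psi\in\Hsc^{\infty,-1-\beta}$; but the commutator gains one order of decay, $[Q,H]\in\Psiscc^{2,2\beta}(\Xb)$ by the analogue of \eqref{13XII17.11}, whence $[Q,H]\psi\in\Hsc^{\infty,-\beta}$. Consequently $[Q,H]\psi\cdot\overline\psi\in L^1(\red M)$ (Cauchy–Schwarz, $x^{-\beta}L^2\cdot x^{\beta}L^2\subset L^1$), and $([Q,H]\psi,\psi)$ is a well‑defined number. The exponent $1+2\beta$ is precisely the largest growth order of $Q$ for which this holds, so there is no margin to spare.

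Next I would regularize with the multipliers already appearing in Proposition~\ref{prop:poly-weight}: for $t\in(0,1]$ set $\phi_t=(1+t/x)^{-(1+2\beta)}$, so that $\phi_t$ is uniformly bounded in $S^0$, tends to $1$ pointwise and in $S^{\delta'}$ for every $\delta'>0$, and $x^2\partial_x\phi_t$ is uniformly $O(x)$ and tends to $0$ pointwise; put $Q_t:=\phi_tQ$, which for $t>0$ lies in $\Psisc^{1,0}(\Xb)$ with bounded coefficients. We may assume, as we do in all applications (where the lemma is iterated starting from $\psi\in L^2_\scl$), that $\beta\ge0$; then $Q_t\psi$ and its $g$‑derivatives are $O(x^\beta)$ in $L^2$. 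Since $Q$, hence $Q_t\psi$, is supported near the asymptotically flat end, Green's identity for $H$ over $\{x>\epsilon\}$ produces only boundary terms over $\{x=\epsilon\}$; these have integrand $O(x^{2\beta})$ in $L^2$‑average over $\{x=\epsilon\}$ and therefore vanish along a sequence $\epsilon\to0$ when $\beta\ge0$. Combined with $(H-\lambda)\psi=0$ and the formal self‑adjointness of $H-\lambda$, this gives, for every $t>0$,
$$([Q_t,H]\psi,\psi)=([Q_t,H-\lambda]\psi,\psi)=-((H-\lambda)Q_t\psi,\psi)=-(Q_t\psi,(H-\lambda)\psi)=0.$$

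It then remains to let $t\to0$. Writing $[Q,H]-[Q_t,H]=[(1-\phi_t)Q,H]=(1-\phi_t)[Q,H]-[\phi_t,H]Q$ and using the vanishing just established,
$$([Q,H]\psi,\psi)=\big((1-\phi_t)[Q,H]\psi,\psi\big)-\big([\phi_t,H]Q\psi,\psi\big).$$
The first term tends to $0$ by dominated convergence ($1-\phi_t\to0$ pointwise, $|1-\phi_t|\le1$, and $[Q,H]\psi\cdot\overline\psi\in L^1$). For the second, $[\phi_t,H]Q\psi$ is, schematically, $(\nabla\phi_t)\cdot\nabla(Q\psi)+(\Delta\phi_t)\,Q\psi+[V,\phi_t]Q\psi$; here $|\nabla\phi_t|_g\le Cx$, $|\Delta\phi_t|\le Cx^2$ and $[V,\phi_t]$ is a zeroth‑order operator with coefficient $O(x^{1+\delta})$, all uniformly in $t$ and tending to $0$ pointwise, while $Q\psi,\nabla(Q\psi)\in x^{-1-\beta}L^2$ and $x^{-\beta}\psi\in L^2$; multiplying out one sees that $x^{\beta}[\phi_t,H]Q\psi$ is dominated in $L^2$ by a fixed function and converges to $0$ pointwise, so $([\phi_t,H]Q\psi,\psi)\to0$ by dominated convergence. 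Since the left‑hand side is independent of $t$, $([Q,H]\psi,\psi)=0$.

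The main obstacle is exactly this last limiting step together with the weighted bookkeeping of the first paragraph: the regularization errors naïvely appear to lose the power $\delta'$ coming from $\phi_t\to1$ in $S^{\delta'}$, which would be fatal since the original pairing is borderline; the resolution is that $[\phi_t,H]$ is a \emph{commutator}, so the error terms are built from $\nabla\phi_t$ and $\Delta\phi_t$, which carry genuine extra powers of $x$ that compensate with no loss. A secondary, softer point is the legitimacy of the integration by parts in Step~3 in a setting whose other ends may be incomplete, which is handled by the localization of $Q$ near the asymptotically flat region.
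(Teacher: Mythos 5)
Your proof is correct and follows essentially the same strategy as the paper's: regularize with factors of the form $(1+t/x)^{-k}$, exploit that $H-\lambda$ is formally self-adjoint and annihilates $\psi$, and then remove the regularizer using the key fact that the commutator of the regularizer with $H$ gains one power of $x$. The differences are cosmetic rather than structural. You take $k=1+2\beta$ so that $Q_t$ has symbolic order $0$ and the self-adjointness step can be phrased in unweighted $L^2$ (given your added hypothesis $\beta\ge 0$), whereas the paper uses the minimal $k=1$ and works directly with the duality pairing between $x^{-\beta}L^2_\scl$ and $x^{\beta}L^2_\scl$; you justify moving $(H-\lambda)$ across by an explicit Green's identity on $\{x>\epsilon\}$ with a boundary-term-along-a-sequence argument, whereas the paper side-steps explicit boundary terms via density of $\dCI(\Xb)$; and for the limiting step the paper argues via uniform boundedness and convergence of the operator $[(1+t/x)^{-1},H-\lambda]Q$ in symbol/Sobolev spaces, while you use dominated convergence on the integrand --- same content, different packaging. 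Your blanket restriction $\beta\ge 0$ is not in the lemma as stated, but it is harmless: it is exactly what holds in the sole application (in the proof of Proposition~\ref{prop:poly-decay} one has $\beta=(s-k-1)/2\ge 0$), and indeed the paper's own convergence argument, which treats $\psi$ as an element of $L^2_\scl(\Xb)$, is phrased under the same tacit assumption.
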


\begin{proof}
As $(H-\lambda)\psi=0$, by elliptic regularity
\begin{equation}\label{eq:elliptic-poly}
\|\psi\|_{x^p\Hsc^k(\Xb)}\leq \tilde b_{1,k,p}\|x^p\psi\|,
\end{equation}
Thus, by elliptic regularity, the differential order of operators below
never matters.

Formally the lemma follows from
$$
( [Q,H]\psi,\psi)=( [Q,H-\lambda]\psi,\psi)= (
(H-\lambda)\psi,Q^*\psi)+( Q\psi,(H-\lambda)\psi)=0,
$$
but care needs to be taken as the pairing on left hand side is {\em
  only} defined (as the dual pairing between $x^{-\beta}L^2_\scl(\Xb)$
and $x^\beta L^2_\scl(\Xb)$) due to the fact that
$[Q,H]\in S^{2\beta}\Psisc^{2,0}(\Xb)$, while
$$
 Q(H-\lambda)\,,
  \quad
   (H-\lambda)Q\in
S^{2\beta+1}\Psisc^{3,0}(\Xb)
 \,,
$$
so the pairing $(
(H-\lambda)Q\psi,\psi)$ is not a priori defined, with the lack
of sufficient decay being the issue.

To remedy this, one
simply regularizes; here we use $(1+t/x)^{-1}=\frac{x}{x+t}$, $t>0$, as a
regularizer since it also plays a role below, this is uniformly
bounded as $t\to 0$ (by $1$), and is $O(x)$ for $t>0$, removing the
pairing issue. Namely, we have, with the pairing being the dual pairing,
%
%
\begin{equation}
 \label{11XII17.1}
 \begin{aligned}
( [Q,H]\psi,\psi)&
 =
  \lim_{t\to 0}( (1+t/x)^{-1}
    [Q,H]\psi,\psi)
\\
&=\lim_{t\to 0}( (1+t/x)^{-1}
(Q(H-\lambda)-(H-\lambda)Q)\psi,\psi)\\
&=\lim_{t\to 0}\Big(( (1+t/x)^{-1}
Q(H-\lambda)\psi,\psi)-( (1+t/x)^{-1}
(H-\lambda)Q\psi,\psi)\Big)
\\
 &
  =
    -\lim_{t\to 0}\big( \big([(1+t/x)^{-1},
    H-\lambda]+(H-\lambda) (1+t/x)^{-1}\big)Q\psi,\psi\big)
\\
 &=-\lim_{t\to 0}\Big(( [(1+t/x)^{-1},
H-\lambda]Q\psi,\psi)+( (H-\lambda) (1+t/x)^{-1}Q\psi,\psi)\Big),
 \end{aligned}
\end{equation}
where the penultimate equality used that
$(H-\lambda)\psi=0$.

Now, $[(1+t/x)^{-1},(H-\lambda)]$ is uniformly bounded (as $t\to 0$)
in $S^{-1}\Diffsc^{1,0}(\Xb)$, and indeed converges to $0$ in
$S^{-1+\delta'}\Diffsc^{1,0}(\Xb)$, $\delta'>0$, so
$[(1+t/x)^{-1},(H-\lambda)]Q$ is uniformly bounded in
$S^0\Diffsc^{2,0}(\Xb)$ and converges to $0$ in
$S^{\delta'}\Diffsc^{2,0}(\Xb)$.
This implies that
$[(1+t/x)^{-1},(H-\lambda)]Q\psi$ converges to $0$ in
$\Hsc^{-1,0}(\Xb)$  {as $t\to 0$}, as can be seen that if $\psi_j\to\psi$ in
$L^2_\scl(\Xb)$ with $\psi_j\in\dCI(\Xb)$, then
$[(1+t/x)^{-1},(H-\lambda)]Q\psi$ converges to $0$ as $t\to 0$ in $\dCI(\Xb)$
thus
in $\Hsc^{-1,0}(\Xb)$, while given any $\ep>0$,
$[(1+t/x)^{-1},(H-\lambda)]Q(\psi-\psi_j)$ is $<\ep$ in
$\Hsc^{-1,0}(\Xb)$ for sufficiently large $j$. Thus,
\begin{equation*}\begin{aligned}
( [Q,H]\psi,\psi)&=-\lim_{t\to 0}\Big(( [(1+t/x)^{-1},
H-\lambda]Q\psi,\psi)+( (H-\lambda)
(1+t/x)^{-1}Q\psi,\psi)\Big)\\
&=-\lim_{t\to 0}( (H-\lambda)
(1+t/x)^{-1}Q\psi,\psi)\\
&=-\lim_{t\to 0}(
(1+t/x)^{-1}Q\psi, (H-\lambda)\psi)=0,
\end{aligned}\end{equation*}
proving the lemma.
%
\qed\medskip\end{proof}

\begin{prop}\label{prop:poly-decay}
Let $\lambda>0$, and suppose that
$\psi\in L^2_{\scl}(\Xbext)$ satisfies $H\psi=\lambda\psi$. Then for all
$\beta\in\Real$, $x^{-\beta}\psi
\in L^2_{\scl}(\Xbext)$.
\end{prop}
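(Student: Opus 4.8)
The plan is to prove the statement by a finite bootstrap. Since $x$ is bounded on $\Xbext$, the claim is trivial for $\beta\leq 0$, so it suffices to show: \emph{if $x^{-\beta}\psi\in L^2_{\scl}(\Xbext)$ for some $\beta\geq 0$, then $x^{-\beta-\delta/2}\psi\in L^2_{\scl}(\Xbext)$}. Applying this finitely often, starting from $\psi\in L^2_{\scl}(\Xbext)$, reaches any prescribed weight. Everything is localized near $\pa\Xb$: the operators $B_{s,k,t}$ of Proposition~\ref{prop:poly-weight} are supported in $\supp\chi\subset\{x<x_1\}\subset\Xbext$, so the commutator identities and all pairings below involve only the collar and the hypothesis $H\psi=\lambda\psi$ on $\Xbext$ suffices. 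Likewise, elliptic regularity for $H-\lambda$ (elliptic in the differential-order sense) gives $\|\psi\|_{\Hsc^{k,p}}\leq C\|\psi\|_{\Hsc^{0,p}}$ for every $k$ and every $p\leq\beta$.

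For the bootstrap step, assume $x^{-\beta}\psi\in L^2_{\scl}(\Xbext)$, $\beta\geq 0$, and set $s=2\beta+\delta+1$ and $k=s-1=2\beta+\delta>0$, so that $s-k=1$. For $t\in(0,1]$, Proposition~\ref{prop:poly-weight} (with $s-k\geq 1$, so the quadratic term $2(x^2D_x)^*x^{1-s}(1+t/x)^{-k}(s-1-k\tfrac{t/x}{1+t/x})(x^2D_x)$ is a non-negative quadratic form) gives
\begin{equation*}
 \sqi[B_{s,k,t},H]\geq 2\lambda\,x^{1-s}(1+t/x)^{-k}+(H-\lambda)\hat R_{s,k,t}+\hat R_{s,k,t}^*(H-\lambda)+\hat K_{s,k,t}
 \,,
\end{equation*}
an inequality of quadratic forms on $\psi$, all terms being finite for $t>0$ by the inductive hypothesis and elliptic regularity, so we may pair it with $\psi$. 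Since $B_{s,k,t}\in S^{s-k}\Diffsc^1(\Xb)=S^1\Diffsc^1(\Xb)\subset S^{1+2\beta}\Psisc^{1,0}(\Xb)$ (using $1\leq 1+2\beta$) and $x^{-\beta}\psi\in L^2_{\scl}$, Lemma~\ref{lemma:sc-formal-adjoint-works} yields $(\sqi[B_{s,k,t},H]\psi,\psi)=0$. The two middle terms also pair to zero with $\psi$: $\hat R_{s,k,t}^*(H-\lambda)\psi=0$ since $(H-\lambda)\psi=0$, while $(H-\lambda)\hat R_{s,k,t}\psi=-[\hat R_{s,k,t},H]\psi$ (again by $(H-\lambda)\psi=0$) and $([\hat R_{s,k,t},H]\psi,\psi)=0$ by Lemma~\ref{lemma:sc-formal-adjoint-works} applied to $\hat R_{s,k,t}\in S^{s-1}\Diffsc^{0,0}(\Xb)\subset S^{1+2\beta}\Psisc^{1,0}(\Xb)$ (the inclusion uses $s-1=2\beta+\delta\leq 1+2\beta$, i.e.\ $\delta\leq 1$). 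Hence
\begin{equation*}
 2\lambda\,\bigl\|x^{(1-s)/2}(1+t/x)^{-k/2}\psi\bigr\|^2\leq\bigl|(\hat K_{s,k,t}\psi,\psi)\bigr|
 \,.
\end{equation*}

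Since $\{\hat K_{s,k,t}\}_{t\in(0,1]}$ is uniformly bounded in $S^{s-1-\delta}\Diffsc^{2,0}(\Xb)$, distributing the weight $x^{-(s-1-\delta)}$ symmetrically and using elliptic regularity gives $|(\hat K_{s,k,t}\psi,\psi)|\leq C\|x^{(1+\delta-s)/2}\psi\|^2_{L^2_{\scl}}$ with $C$ independent of $t$; and $(1+\delta-s)/2=-\beta$ by the choice of $s$, so the right-hand side equals $C\|x^{-\beta}\psi\|^2<\infty$. Thus $\|x^{(1-s)/2}(1+t/x)^{-k/2}\psi\|^2$ is bounded uniformly in $t\in(0,1]$, and letting $t\to 0^+$ with monotone convergence ($(1+t/x)^{-k/2}\uparrow 1$) gives $\|x^{-\beta-\delta/2}\psi\|^2=\|x^{(1-s)/2}\psi\|^2<\infty$, proving the bootstrap step. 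Iterating completes the proof; the hypothesis $\lambda>0$ enters precisely in dividing by $2\lambda$, i.e.\ in the favorable sign of the leading commutator term $2\lambda x^{1-s}(1+t/x)^{-k}$.

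I expect the main obstacle to be the careful bookkeeping of the scattering orders ensuring Lemma~\ref{lemma:sc-formal-adjoint-works} legitimately applies to both $B_{s,k,t}$ and $\hat R_{s,k,t}$ and that every pairing on the right-hand side is a priori defined for $t>0$; it is exactly the constraints $s-k\leq 1+2\beta$ and $s-1\leq 1+2\beta$ that pin the gain to $\delta/2$ per step rather than more. Once the duality is under control, the estimate itself is immediate from the positivity of the commutator and the sign of $\lambda$, and the regularizing parameter $t$ together with monotone convergence converts the a priori finite bound into the claimed decay.
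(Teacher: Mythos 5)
Your proof is correct and uses the same core ingredients as the paper's proof: the commutator identity of Proposition~\ref{prop:poly-weight}, the integration-by-parts justification of Lemma~\ref{lemma:sc-formal-adjoint-works}, elliptic regularity to control $\hat K_{s,k,t}$, and monotone convergence as $t\to 0$. The bookkeeping is genuinely different, though: the paper argues by contradiction, setting $\beta_1=\sup\{\beta\geq 0:x^{-\beta}\psi\in L^2_\scl\}$ and taking $s=1+2\beta$ with $\beta\in(\beta_1,\beta_1+\delta/2)$ and $k\leq\delta$ chosen so that $\beta-k/2<\beta_1$, whereas you run a direct $\delta/2$-bootstrap with $s=2\beta+\delta+1$ and $k=s-1$, saturating $s-k=1$ and letting $k$ grow with $\beta$. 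Your choice of $s$ arranges $(s-1-\delta)/2=\beta$, so the $\hat K$-estimate lands directly on the inductive hypothesis $\|x^{-\beta}\psi\|<\infty$, which makes the paper's separate constraint $k\leq\delta$ unnecessary. You are also more explicit than the paper about eliminating the term $\bigl((H-\lambda)\hat R_{s,k,t}\psi,\psi\bigr)$, rewriting it as $-\bigl([\hat R_{s,k,t},H]\psi,\psi\bigr)$ and invoking Lemma~\ref{lemma:sc-formal-adjoint-works} a second time for $\hat R_{s,k,t}\in S^{s-1}\Diffsc^{0,0}(\Xb)$ (this is where $\delta\leq 1$ is used); the paper drops these terms without comment, implicitly relying on the same regularization argument. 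Both parameterizations satisfy the constraints needed for Proposition~\ref{prop:poly-weight} and for the two applications of Lemma~\ref{lemma:sc-formal-adjoint-works}, so your variant is a valid, and in some respects cleaner, version of the paper's argument.
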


\begin{rem}
 \label{R23XII17.1}
{\rm
In the proof we will assume for clarity that $(\red{M},g)$ is complete with  $\psi\in L^2_{\scl}(\Xb)$.
If we merely assume $x^{-\beta}\psi\in
L^2_\scl(\Xext)$, then all integrations by parts, such as e.g.\ those involved  in the proof of Lemma~\ref{lemma:sc-formal-adjoint-works}, should be carried-out on $\Xext$. This will introduce  controlled  boundary terms at the inner boundary $\{x=x_1\}$ which will not affect the argument. Equivalently, in the proof the eigenfunction $\psi$ can be multiplied by a cut-off function which vanishes near $\{x=x_1\}$ and equals one near $\{x=0\}$, leading to error terms in the equations which can be estimated by $C \|\psi\|$, and resulting in the same conclusions.
}
\end{rem}


\begin{proof}
Let
$$
\beta_1=\sup\{\beta\in[0,\infty):\ x^{-\beta} \psi\in L^2_\scl(\Xb)\},
$$
and assume $\beta_1$ is finite. Let
$\beta\in(\beta_1,\beta_1+\delta/2)>0$.
Take $s=1+2\beta$ and $0<k\leq \min(1,\delta) {=\delta}$
 with $\beta-k/2\in[0,\beta_1)$ if
$\beta_1>0$, $\beta-k/2=0$ if $\beta_1=0$. Thus,
$s-k=1+2(\beta-k/2)\geq 1$, and $\frac{s-1-k}{2}=\beta-k/2$ is either
$0$ or $<\beta_1$, so in either case
$$
x^{-(s-k-1)/2}\psi\in L^2_\scl(\red{M}),
$$
and $\frac{s-1-\delta}{2}\leq \frac{s-1-k}{2}$ as well, so
$$
x^{-(s-1-\delta)/2}\psi\in L^2_\scl(\red{M}).
$$

Apply
Proposition~\ref{prop:poly-weight} with this $s,k$, using that $\hat
K_{s,k,t}$ uniformly bounded in $S^{s-1-\delta}\Diffsc^{2,0}(\Xb)$, so
in view of elliptic regularity, \eqref{eq:elliptic-poly},
$$
|(\hat K_{s,k,t}\psi,\psi)|\leq \tilde C_s \|x^{-(s-1-\delta)/2}\psi\|^2,
$$
and
$\hat R_{s,k,t}$ is in
$S^{s-k-1}\Diffsc^{0,0}(\Xb)$ for $t>0$ (not uniformly bounded in $t$ though!),
to conclude, using $(H-\lambda)\psi=0$, that
\begin{equation*}\begin{aligned}
( \sqi [B_{s,k,t},H]\psi,\psi)&\geq 2 \lambda
\|(1+t/x)^{-k/2}x^{-\beta}\psi\|^2+( (H-\lambda)\hat
R_{s,k,t}\psi,\psi)\\
&\qquad\qquad+(\hat
R_{s,k,t}^*(H-\lambda)\psi,\psi)+(\hat K_{s,k,t}\psi,\psi)\\
&\geq 2 \lambda
\|(1+t/x)^{-k/2}x^{-\beta}\psi\|^2-\tilde C_{s}\|x^{-(s-1-\delta)/2}\psi\|^2.
\end{aligned}\end{equation*}
On the other hand, by Lemma~\ref{lemma:sc-formal-adjoint-works},
$$
( \sqi [B_{s,k,t},H]\psi,\psi)=0.
$$
Thus, we conclude that
$$
2 \lambda
\|(1+t/x)^{-k/2}x^{-\beta}\psi\|^2
$$
is uniformly bounded as $t\to 0$, thus $x^{-\beta}\psi\in L^2$,
contradicting the definition of $\beta$.

This shows that $\beta_1$ is not finite, hence proves the proposition.
\qed\medskip\end{proof}

\subsection{Superexponential decay}\label{sec:exp}

Using the global positive commutator estimate,
Proposition~\ref{prop:Mourre-est}, we can now prove a decay faster-than-any-exponential
of non-threshold eigenfunctions. For this part of the paper,
we could adapt the proof of
Froese and Herbst \cite{FroExp} in Euclidean potential scattering,
as was done in \cite{Vasy:Propagation-2} in the geometric potential
scattering setting. However, we focus on the approach
that will play a crucial role in the proof of unique continuation at
infinity. Nonetheless, a modification of the Froese-Herbst commutator
will play a role when $\alpha=0$ (in the notation of Lemma~\ref{lemma:FH-1}),
where
conjugated Hamiltonian is close to being self-adjoint (in fact, it
is, if $F=0$), so we will use a modification of $xD_x$, more precisely
a rescaling of $\im P$, for a commutator estimate
in place of $\im P$.

\begin{thm}\label{thm:exp-decay}
Let $\lambda>0$, and suppose that
$\psi\in L^2_{\scl}(\Xbext)$ satisfies $H\psi=\lambda\psi$. Then for all
$\alpha\in\Real$, $e^{\alpha/x}\psi
\in L^2_{\scl}(\Xbext)$.
\end{thm}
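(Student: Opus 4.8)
The plan is a Froese--Herbst positive-commutator argument applied to the conjugated operator $P(F)=e^{F}(H-\lambda)e^{-F}$, with $F$ an exponential weight of the type allowed by Lemma~\ref{lemma:FH-1}. The previous subsection supplies the needed input: by Proposition~\ref{prop:poly-decay}, $\psi$ decays super-polynomially, i.e.\ $x^{-\beta}\psi\in L^2_\scl(\Xbext)$ for every $\beta$, and this is what legitimises the integrations by parts below once the argument is carried out on a region $\{x>\delta\}$, with the inner boundary terms controlled as in Remark~\ref{R23XII17.1}. Concretely, for each fixed $\alpha>0$ and with $F$ a cut-off version of $\alpha/x+\beta|\log x|$ supported in $\{x<1/2\}$, the goal is an estimate $\|x^{1/2}e^{F}\psi\|\le C_\alpha\|\psi\|$, obtained by first establishing it on $\{x>\delta\}$ --- where $e^F$ is bounded, so everything is finite, and the $\{x=\delta\}$-boundary contributions are either of favourable sign (hence droppable) or bounded by $C\|\psi\|$ --- and then letting $\delta\to0$ by monotone convergence. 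Granting this for all $\alpha>0$, running the estimate with $\alpha+1$ in place of $\alpha$ and using $x^{1/2}e^{1/x}\ge1$ on $\Xbext$ removes the weight $x^{1/2}$ and gives $e^{\alpha/x}\psi\in L^2_\scl(\Xbext)$; as $\alpha>0$ is arbitrary, and the case $\alpha\le0$ is trivial, this is the theorem.

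Fix such an $\alpha$ and write $\psi_F=e^F\psi$, which solves $P(F)\psi_F=0$. By Lemma~\ref{lemma:FH-1}, $\re P(F)=H+(x^2D_xF)^2-\lambda+x^\delta R_2$ and $\im P(F)=2(x^2\pa_xF)(x^2D_x)+x^\delta R_3$, with $[\re P(F),\im P(F)]\in x\Diffscc^2(\Xb)$ by \eqref{eq:FH-3}. The pairing identity \eqref{eq:comm-8-0},
\[
0=\|\re P(F)\psi_F\|^2+\|\im P(F)\psi_F\|^2+\big(\psi_F,\sqi[\re P(F),\im P(F)]\psi_F\big),
\]
together with elliptic regularity \eqref{eq:elliptic-poly} for the elliptic operator $P(F)$ and the extra order of vanishing of the commutator, gives
\[
\|\re P(F)\psi_F\|+\|\im P(F)\psi_F\|\le C\|x^{1/2}\psi_F\|,
\]
so $\psi_F$ is, in a weighted $L^2$ sense, almost annihilated by both $\re P(F)$ and $\im P(F)$. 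It remains to extract a \emph{positive} lower bound for the last term of the pairing identity.

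Since the zeroth-order term $(x^2D_xF)^2$ of $\re P(F)$ commutes through, $\sqi[\re P(F),\im P(F)]$ reduces, modulo $x^{1+\delta}\Diffscc^2(\Xb)$, to a commutator $\sqi[H,\cdot]$ with the symmetrised radial field, to which Proposition~\ref{prop:Mourre-est} applies: where $x^2\pa_xF$ is comparable to a nonzero constant one uses \eqref{eq:Mourre-ch} for $B\sim x^2D_x$, while in the degenerate regime --- where $\alpha$ is small and $F$ is essentially $\beta|\log x|$, so $\im P(F)$ collapses --- one instead works with the rescaled commutator $\sqi[H-\lambda-\alpha^2,-2xD_x]$ and the non-degenerate estimate \eqref{eq:Mourre-0} for $A\sim xD_x$, aided by Proposition~\ref{prop:poly-weight}. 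In either case the leading term is the strictly positive multiple of $\lambda$; the $BxB$-type contribution has the favourable sign after re-expressing $x^2D_x\psi_F$ through $\im P(F)\psi_F$; the terms $(H-\lambda)R+R^*(H-\lambda)$ pair against $\re P(F)\psi_F$ (plus the commuted-through constant) and are absorbed into the two squares of the pairing identity; and the remainder, lying in $x^{1+\delta}\Diffscc^2(\Xb)$, is bounded by $C\|x^{(1+\delta)/2}\psi_F\|^2$. The pairing identity then gives $c\|x^{1/2}\psi_F\|^2\le C\|x^{(1+\delta)/2}\psi_F\|^2+C\|\psi\|^2$ with $c>0$; splitting the first term on the right at a small $x_*$ --- absorbing $\int_{\{x<x_*\}}$ into the left side because of the extra $x^\delta\le x_*^\delta$, and bounding $\int_{\{x\ge x_*\}}$ by $C_\alpha\|\psi\|^2$ since $e^F$ is bounded there --- yields the desired bound $\|x^{1/2}\psi_F\|\le C_\alpha\|\psi\|$.

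I expect the commutator positivity to be the genuine obstacle, on two counts. First, if one wants the estimate uniformly as $\alpha\to\infty$ (rather than for each fixed $\alpha$), the naive use of \eqref{eq:Mourre-ch} produces error terms growing with $\alpha$, because $(H-\lambda)\psi_F$ carries a non-negligible $\alpha^2\psi_F$ piece; this is resolved, following \cite{Vasy:Exponential}, by the semiclassical rescaling $h=\alpha^{-1}$, $P_h=h^2P$, after which $\re P_h=h^2H-1-h^2\lambda+\dots$ and $\im P_h=-2h\,x^2D_x+\dots$ are genuine semiclassical scattering operators whose coefficients do not blow up as $h\to0$, and $\sqi[\re P_h,\im P_h]$ is, near $x=0$, a small perturbation of the commutator obtained by replacing $H$ with $\Delta_{g_0}$, for which positivity is a direct Poisson-bracket computation and therefore persists for $g$ near $g_0$. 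Second, the degenerate regime where the $\alpha/x$-part of the weight is negligible must be handled by the $A=xD_x$ commutator: this works precisely because \eqref{eq:Mourre-0} has the non-degenerate form $\sqi[A,H]=2\lambda+(H-\lambda)\tilde R+\tilde R^*(H-\lambda)+\tilde K$ with $\tilde K\in x^{\delta}\Psiscc^{2,0}(\Xb)$, i.e.\ with no vanishing factor of $x$ in front of the positive constant $2\lambda$. Verifying that the $\{x=\delta\}$-boundary terms from the limiting argument have the right sign is a further point that will require care.
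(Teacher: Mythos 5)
Your positive--commutator machinery is essentially the right one and matches the paper's: the identity $0=\|\re P\psi_F\|^2+\|\im P\psi_F\|^2+(\psi_F,\sqi[\re P,\im P]\psi_F)$ from Lemma~\ref{lemma:FH-1}, the Mourre-type input from Proposition~\ref{prop:Mourre-est} for the commutator (with \eqref{eq:Mourre-ch} in the nondegenerate case and \eqref{eq:Mourre-0} plus the rescaled conjugate $xD_x$ when $\im P$ degenerates), and the final splitting of $\|x^{(1+\delta)/2}\psi_F\|^2$ at a small threshold. However there is a genuine gap in how you propose to make $\psi_F=e^F\psi$ a legitimate object of study, and another in the overall logic.

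First, the regularization. You propose to prove the estimate ``on $\{x>\deltat\}$'' and let $\deltat\to0$, asserting that the $\{x=\deltat\}$-boundary contributions are ``of favourable sign or bounded by $C\|\psi\|$.'' That is not the case: cutting off at $x=\deltat$ produces, via $[P(F),\chi_{\deltat}]\psi_F$, terms supported where $x\approx\deltat$ and weighted by $e^{2F}\approx e^{2\alpha/\deltat}$, which blow up as $\deltat\to 0$ and cannot be bounded by $\|\psi\|^2$. The regularization the paper actually uses is not in $\deltat$ but in a parameter $\beta$: one takes $F_\beta=\alpha/x+\beta\log\!\big(1+\frac{\gamma}{\beta x}\big)$. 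This family is monotone increasing to $(\alpha+\gamma)/x$ as $\beta\to\infty$, is uniformly bounded in $S^1([0,1))$, and satisfies $F_\beta\le\alpha/x+\beta|\log x|$, so that $e^{F_\beta}\psi\in L^2_\scl$ for each fixed $\beta$ once one already knows $e^{\alpha/x}\psi\in L^2_\scl$; the commutator estimate is run uniformly in $\beta$ on the whole exterior, with no spatial cutoff, and monotone convergence in $\beta$ gives $e^{(\alpha+\gamma)/x}\psi\in L^2_\scl$. Your suggested family $\alpha/x+\beta|\log x|$ does not converge to any $(\alpha+\gamma)/x$ as $\beta$ varies, so the ``monotone convergence'' step is not available there.

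Second, the logic. You write ``for each fixed $\alpha>0$ \dots the goal is $\|x^{1/2}e^F\psi\|\le C_\alpha\|\psi\|$'' and then ``granting this for all $\alpha>0$.'' But whether $e^{\alpha/x}\psi\in L^2_\scl$ is precisely what is to be proved, and without that one cannot even make sense of the pairing identity (nor of $\psi_F\in L^2$, nor of the elliptic estimate for $\psi_F$). The paper's proof is a bootstrap by contradiction: set $\alpha_1=\sup\{\alpha\ge 0:\ e^{\alpha/x}\psi\in L^2_\scl\}$, assume $\alpha_1<\infty$, take $\alpha<\alpha_1$ (or $\alpha=0$ if $\alpha_1=0$, which is when the $\im P$-degenerate branch is actually needed), and show that for a fixed small $\gamma_0>0$ depending only on $\alpha_1$, $e^{(\alpha+\gamma)/x}\psi\in L^2_\scl$ for $\gamma<\gamma_0$; choosing $\alpha$ close enough to $\alpha_1$ contradicts the definition of $\alpha_1$. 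Relatedly, the Mourre estimate is invoked at energy $\lambda+\alpha_1^2$, which is where the fixed $\gamma_0$ comes from. Finally, the semiclassical rescaling $h=\alpha^{-1}$, $P_h=h^2P$ that you sketch belongs to the proof of unique continuation (Theorem~\ref{thm:unique}), where one must indeed let $\alpha\to\infty$; it is not used for the present superexponential decay statement, where $\alpha$ is only incremented by a small fixed $\gamma$ at each step.
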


\begin{proof}
We start by pointing-out that Remark~\ref{R23XII17.1} applies again.

The proof is by contradiction.
First note that
$$
 \psi\in\dCinf(\Xb)
$$
by Proposition~\ref{prop:poly-decay} and the usual weighted elliptic estimates.
(Recall that
$\dCinf(\Xb)$ denotes the  space of smooth functions which decay to infinite order
 at the boundary.)

 Let
\begin{equation*}
\alpha_1=\sup\{\alpha\in[0,\infty):\ \exp(\alpha/x)\psi\in L^2_{\scl}(\Xb)\}.
\end{equation*}
If $\alpha_1=0$, then let $\alpha=0$,
otherwise suppose that $\alpha<\alpha_1$,
and in either case
 $\gamma\in(0,1]$ will be a constant satisfying $\alpha+\gamma>\alpha_1$. These two cases will require separate treatment.

The $\alpha_1>0$ case is more representative of the proof of the
unique continuation result, so we will start with that.  We show in
this case that for sufficiently small $\gamma$
(depending only on $\alpha_1$) $\exp((\alpha+\gamma)/x)\psi\in L^2_{\scl}(\Xb)$,
which contradicts our assumption on $\alpha_1$ if $\alpha$ is close enough
to $\alpha_1$.

We start with a general discussion, so we do not yet make assumptions
on $\alpha_1$.

Below we use two positivity estimates, namely \eqref{eq:FH-3} and the
Mourre-type
estimate, Proposition~\ref{prop:Mourre-est},
at energy $\lambda+\alpha_1^2$ (i.e.\ with $\lambda$ replaced by this
in the statement of the proposition), with $B=\chi(x)x^2D_x+(\chi(x)x^2D_x)^*$.
That is, since
$\lambda+\alpha_1^2>0$,
there exists $c_0>0$,
$R\in\Psisc^{0,0}(\Xb)$, $K\in\Psisc^{2,0}(\Xb)$, such that
for $\psit\in L^2_{\scl}(\Xb)$,
\begin{equation}\begin{split}\label{eq:pos-comm-88}
(\psit,&\sqi [B,H]\psit)\\
&\geq c_0\|x^{1/2}\psit\|^2-
 \red{2} \re(\psit,x(x^2D_x)^2\psit)\\
&\qquad+\re((H-\lambda-\alpha_1^2)\psit,xR\psit)
+\re(x^{( {1+}\delta)/2}\psit,Kx^{( {1+}\delta)/2}\psit).
\end{split}\end{equation}
We will apply this  with $\psit=\psi_F$, with $F$ given by \eqref{11XII17.2} below.

We first note that we certainly have for all $\beta\in\Real$,
$\exp(\alpha/x)x^\beta\psi\in L^2_{\scl}(\Xb)$, due to our choice of $\alpha$.
We apply  Lemma~\ref{lemma:FH-1} with $\beta\ge 1$ and with
\begin{equation}
 \label{11XII17.2}
    F\equiv F_\beta:=\frac{\alpha}{x}+\beta\log(1+\frac{\gamma}{\beta x}),
\end{equation}
and let
$$
 \psi_\beta:=\psi_F\equiv e^F\psi
  \,.
$$
The reason for the choice \eqref{11XII17.2} is that
on the one hand
$F(x)\to (\alpha+\gamma)/x$ as $\beta\to\infty$, so in the limit we will
obtain an estimate on $e^{(\alpha+\gamma)/x}\psi$, and on the other hand
$F(x)\leq \frac{\alpha}{x}+\beta|\log x|$,
 so $e^{F_\beta}$ is bounded
by $x^\beta e^{\alpha/x}$, for all values of $\beta$, i.e.\ $e^{F_\beta}$
provides a `regularization' (in terms of growth) of $e^{(\alpha+\gamma)/x}$,
so that Lemma~\ref{lemma:FH-1} can be applied.

Note that $F=F_\beta\in S^1([0,1))$, and
$F_\beta$ is uniformly bounded in $S^1(
[0,1))$
for $\beta\in[1,\infty)$, $\alpha\in[0,\alpha_1)$ (or $\alpha=\alpha_1$
if $\alpha_1=0$), $\gamma\in[0,1]$. Indeed,
\begin{equation*}
0\leq -x^2\pa_x F=\alpha+\gamma(1+\frac{\gamma}{\beta x})^{-1}\leq\alpha+\gamma,
\end{equation*}
and in general $(x\pa_x)^m(1+\frac{\gamma}{\beta x})^{-1}
=(-r\pa_r)^m(1+r)^{-1}$, $r=\frac{\gamma}{\beta x}$, so the uniform boundedness
of $F$ follows from $(1+r)^{-1}$ being a symbol in the usual sense on
$[0,\infty)$. In particular, all symbol norms of $-x^2\pa_x F-\alpha$ are
$O(\gamma)$.

Below, when $\alpha=0$, we will need to consider
$(-x^2\pa_x F)^{-1}(x^2\pa_x)^m (-x^2\pa_x F)$, $m\geq 0$. By Leibniz' rule,
this can be written as
$\sum_{j\leq m}c_j x^{m} (-x^2\pa_x F)^{-1}(x\pa_x)^j (-x^2\pa_x F)$.
In terms of $r$, $(-x^2\pa_x F)^{-1}(x\pa_x)^j (-x^2\pa_x F)$ {with
  $\alpha=0$} takes the form
$$
 (1+r)(-r\pa_r)^j(1+r)^{-1}
 \,,
$$
hence it
is still bounded on $[0,\infty)$, so in fact
\begin{equation}\label{eq:factor-8}
x^{-m}(-x^2\pa_x F)^{-1}(x^2\pa_x)^m (-x^2\pa_x F),\quad m\geq 0,
\end{equation}
is uniformly bounded on $[0,\infty)$. In fact, \eqref{eq:factor-8} is uniformly
bounded in $S^0([0,1))$, since applying $x\pa_x$ to it gives rise
to additional factors such as
\begin{equation*}
(-x^2\pa_x F)^{-k}(x\pa_x)^k(-x^2\pa_x F),
\end{equation*}
which  {we have just seen to be uniformly bounded on $[0,\infty)$}.

We remark first that $P(F)\psi_F=0$, so by elliptic regularity,
\begin{equation*}
\|\psi_F\|_{x^p\Hsc^k(\Xb)}\leq b_{1,k,p}\|x^p\psi_F\|,
\end{equation*}
with $b_{1,k,p}$ independent of $F$ as long as $\alpha$ is bounded.
{This follows from the fact that the estimates on the derivatives of $F$, as needed for controlling $b_{1,k,p}$,
are uniform in  $\alpha\in [0,\alpha_1]$,  $\gamma\in [0,1]$ and $\gamma/\beta\in[0,1]$.}
{\em In general,
below $b_j$ denote positive
constants that are independent of $\alpha,\beta,\gamma$
in these intervals, and $R_j$ denote operators which are uniformly
bounded in $\Diffscc^2(\Xb)$, or on occasion in $\Psiscc^{m,0}
(\Xb)$, for some $m$.}
(Note {again} that, by elliptic regularity, the differential order
never matters.)

{As already pointed out, the} proof is slightly different in the cases $\alpha>0$ and $\alpha=0$
since in the latter case the usually dominating term, $-2\alpha x^2D_x$,
of $\im P$ vanishes.

When $\alpha>0$, the key step in the proof of this theorem arises from considering,
with $P\equiv P_\beta:=H(F)-\lambda$,
\begin{equation*}
P^*P=(\re P)^2+(\im P)^2+i(\re P\im P-\im P\re P),
\end{equation*}
so
\begin{equation}\label{eq:comm-8}
0=(\psi_F,P^*P\psi_F)=\|\re P\psi_F\|^2+\|\im P\psi_F\|^2+
(\psi_F,\sqi [\re P,\im P]\psi_F).
\end{equation}
The first two terms on the right-hand side are non-negative, so the
key issue is the positivity of the commutator. Note that
%
\begin{equation}\begin{split}\label{eq:comm-9}
\re P=H-\alpha^2-\lambda+\gamma R_1+x^{\rdelta}R_2,\\
\im P=-2\alpha x^2D_x+\gamma R_3+x^{\rdelta}R_4.
\end{split}\end{equation}
%
By \eqref{eq:FH-3},
%
\begin{equation}
 \begin{split}
    \sqi [\re P,\im P]=&2\alpha \sqi [x^2D_x,H]+x \gamma R_5
    +x^{1+\rdelta} R_{6}
\,.
\end{split}
 \label{15XII17.1}
\end{equation}
Hence, from \eqref{eq:comm-8} and
\eqref{eq:pos-comm-88}, with the $\re P$ and $\im P$ terms in the
scalar products arising
from \eqref{eq:pos-comm-88},
\begin{equation}\begin{split}\label{eq:comm-24}
0\geq &\|\re P\psi_F\|^2+\|\im P\psi_F\|^2+2\alpha c_0\|x^{1/2}\psi_F\|^2
 + {\gamma}
 (\psi_F,x    R_{11}\psi_F)\\
 &+(\psi_F,x R_{12}\re P\psi_F)+(\psi_F,\re Px R_{13}\psi_F)\\
&
+ (\psi_F,x   R_{14}\im P\psi_F)+(\psi_F,\im Px   R_{15}\psi_F)
  +(\psi_F,x^{1+\rdelta}R_{16}\psi_F)
  \,.
\end{split}\end{equation}
Now,  terms such as
 $|(\psi_F,x^{1+\rdelta}R_{16}\psi_F)|$
 can be estimated
by $b_2\|x^{(1+ \rdelta )/2}\psi_F\|^2$,
while
 $\gamma|(\psi_F,x    R_{11}\psi_F)|$
may be estimated by $\gamma b_3\|x^{1/2}\psi_F\|^2$,
while
\begin{equation*}\begin{split}
&|(\psi_F,x   R_{12}\re P\psi_F)|\leq b_4\|x \psi _F\|\|\re P\psi_F\|
\leq b_4(\ep^{-1}\|x \psi _F\|^2+\ep\|\re\psi_F\|^2)
\,,
\\
&|(\psi_F,x   R_{14}\im P\psi_F)|\leq b_5\|x \psi _F\|\|\im P\psi_F\|
\leq b_5(\ep^{-1}\|x \psi _F\|^2+\ep\|\im\psi_F\|^2)
 \,,
\end{split}\end{equation*}
with similar estimates for the remaining terms.
Putting this together, \eqref{eq:comm-24} yields
\begin{equation}\begin{split}\label{eq:comm-32}
0\geq &
 (1-b_6 \ep)
  \|\re P\psi_F\|^2
   +
    (1-b_7 \ep)
     \|\im P\psi_F\|^2\\
&+(2\alpha c_0
-\gamma b_8)\|x^{1/2}\psi_F\|^2-b_9(\ep)\|x^{(1+\rdelta)/2} \psi _F\|^2.
\end{split}\end{equation}
For $\deltat>0$, in $x\geq\deltat$, $x|\psi_F|=xe^{F}|\psi|\leq
b_{10}(\deltat)|\psi|$, so
\begin{equation*}\begin{split}
    \|x^{(1+\rdelta)/2} \psi _F\|^2&=\|x^{(1+\rdelta)/2} \psi _F\|^2_{x\leq\deltat}+\|x^{(1+\rdelta)/2} \psi _F\|^2_{x\geq\deltat}
\\
    &
    \leq \deltat^{ \rdelta }\|x^{1/2}\psi_F\|^2_{x\leq\deltat}
    +b_{10}(\deltat)\|\psi\|^2_{x\geq\deltat}
\\
 &
    \leq \deltat^{ \rdelta }\|x^{1/2}\psi_F\|^2
+b_{10}(\deltat)\|\psi\|^2.
\end{split}
\end{equation*}
Thus, \eqref{eq:comm-32} yields that
\begin{equation}
 \begin{split}\label{eq:comm-64}
 0\geq &(1-b_6 \ep)\|\re P\psi_F\|^2+(1-b_7 \ep)\|\im P\psi_F\|^2
\\
 &
 +(2\alpha c_0
    -\gamma b_8-b_9(\ep)\deltat^{ \rdelta })\|x^{1/2}\psi_F\|^2-b_{10}(\deltat)\|\psi\|^2.
 \end{split}
\end{equation}
Hence, choosing $\ep>0$ sufficiently small so that $b_6\ep<1$, $b_7\ep<1$,
then choosing $\gamma_0>0$ sufficiently small so that $b_{11}=2\alpha c_0
-\gamma_0 b_8>0$, we deduce that for $\gamma<\gamma_0$,
\begin{equation}\label{eq:comm-88}
b_{10}(\deltat)\|\psi\|^2\geq (b_{11}-b_9\deltat^{ \rdelta })\|x^{1/2}\psi_F\|^2.
\end{equation}
But, for $\deltat^{ \rdelta }\in(0,\frac{b_{11}}{b_9})$,
this shows that $\|x^{1/2}\psi_F\|^2$
is uniformly bounded as $\beta\to\infty$.
Noting that $F$ is an increasing function of $\beta$
and $\psi_F$ converges to $e^{(\alpha+\gamma)/x}\psi$ pointwise,
we deduce from the monotone convergence theorem that
\begin{equation*}
x^{1/2}e^{(\alpha+\gamma)/x}\psi\in L^2_\scl(\Xb),
\end{equation*}
so for
$\gamma'<\gamma$, $e^{(\alpha+\gamma')/x}\psi\in L^2_\scl(\Xb)$.
%

We pass now to the  case $\alpha=0$. Then \eqref{15XII17.1} becomes
\begin{equation}\label{15XII17.2}
 \sqi [\re P,\im P]=\gamma x R_{16}
 + x^{1+\delta} R_{17}
  \,.
\end{equation}
The calculations so far lead instead to
$$\|\re P\psi_F\|\leq b_{12}\|x^{1/2}\psi_F\|
 \,,
 \quad
\|\im P\psi_F\|\leq b_{12}\|x^{1/2}\psi_F\|
 \,.
$$
This  implies that
\begin{equation}\label{eq:deg-re-est}
 \|(H-\lambda)\psi_F\|\leq \gamma b_{13}\|\psi_F\|
  +b_{14}\|x^{\rdelta }\psi_F\|
 \,,
\end{equation}
\begin{equation}\label{eq:deg-im-est}
\|(x^2\pa_x F)x^2 D_x   \psi _F\|\leq b_{15}\|x^{\rdelta }\psi_F\|
 \,.
\end{equation}

To continue,
instead
of the degenerating
 commutator $[\re P,\im P]$ (where the term $x^2 \partial_x F$ loses
 its leading order contribution from $\alpha$), we recall from
 \eqref{eq:imP-formula} that
\begin{equation*}\begin{aligned}
\im P&=x(x^2\pa_x F) \tilde A+\tilde A^* x(x^2\pa_x F),\\
\tilde
A&=xD_x+x^{-1+\rdelta} \tilde R_1=A+x^{-1+\rdelta}\tilde R_2,\ \tilde
A-\tilde A^*\in\Diffscc^1(\Xb)
\end{aligned}\end{equation*}
with $\tilde R_1,\tilde R_2$ uniformly bounded
in $\Diffscc^1(\Xb)$, and consider $P^*\tilde A-\tilde AP$.
Note that
\begin{equation*}\begin{aligned}
\im P=(\im P)^*&=2x(x^2\pa_x F)\tilde A+[\tilde A, x(x^2\pa_x
F)]+(\tilde A^*-\tilde A) x(x^2\pa_x
F)\\
&=2x(x^2\pa_x F)\tilde A^*+x\gamma\tilde R_3=\tilde A^* 2x(x^2\pa_x F)+x\gamma\tilde R_4
\end{aligned}\end{equation*}
with $\tilde R_3,\tilde R_4$ uniformly bounded in $\Diffscc^1(\Xb)$.

Now,
%
\begin{equation}\begin{split}
&i(\tilde AP-P^*\tilde A)=\sqi [\tilde A,\re P]-(\im P \tilde A+\tilde A\im P)\\
&\qquad=\sqi [A,H-\lambda]+ x^{\rdelta}   R_{18}
-\tilde A^* 2x(x^2\pa_x F)\tilde A-\tilde A 2x(x^2\pa_x F)\tilde A^*
+\gamma R_{21}
  \,.
\end{split}
 \label{15XII17.3}
\end{equation}
%
Thus,
\begin{equation*}\begin{split}
0&=(\psi_F,i(\tilde AP-P^*\tilde A)\psi_F)\\
&\geq (\psi_F,\sqi [A,H]\psi_F)
+2\|x^{1/2}(-x^2\pa_x F)^{1/2}\tilde A\psi_F\|^2+2\|x^{1/2}(-x^2\pa_x
F)^{1/2}\tilde A^*\psi_F\|^2\\
&\qquad- b_{16}\|x^{\rdelta/2}\psi_F\|^2
  -b_{18}\gamma\|\psi_F\|^2.
\end{split}\end{equation*}
Using the Mourre estimate
\eqref{eq:Mourre-0}
in Proposition~\ref{prop:Mourre-est},
we deduce that, with $c'_0\equiv 2\lambda >0$,
\begin{equation*}\begin{split}
 0
  \geq
    c'_0\|\psi_F\|^2&-{b_{19}}
    \|(H-\lambda)\psi_F\|\|\psi_F\|
    -{b_{20}}\|x^{\rdelta/2}\psi_F\|^2
\\
 &
  \qquad\qquad
    -b_{18}\gamma\|\psi_F\|^2
    \,.
\end{split}\end{equation*}
Using \eqref{eq:deg-re-est}-\eqref{eq:deg-im-est} we deduce, as above, that
\begin{equation*}
 \begin{split}
 0
  &
   \geq c'_0\|\psi_F\|^2-\gamma b_{21}\|\psi_F\|^2
    -b_{22}\|\psi_F\|\|x^{\rdelta} \psi_F\|-b_{23}\|x^{\rdelta/2}\psi_F\|^2
\\
 &
  \geq (c'_0-\gamma b_{21}-\ep_1 b_{22})\|\psi_F\|^2
   -  b_{22}\ep_1^{-1} \|x^{\rdelta}\psi_F\|^2
    - b_{23}    \|x^{\rdelta/2}\psi_F\|^2
    \\
 &
  \geq(c'_0-\gamma b_{21}-\ep_1 b_{22}
     -(b_{22}\ep_1^{-1} \tdelta^{\delta}
   +b_{23}\tdelta^{\delta/2}))\|\psi_F\|^2
    -b_{24}(\tdelta)\ep_1^{-1} \|\psi\|^2
     \,.
 \end{split}
\end{equation*}
Again, we fix first $\ep_1>0$ so that $c'_0-\ep_1 b_{22}>0$, then $\gamma_0>0$
so that $c'_0-\gamma_0 b_{21}-\ep_1 b_{22}>0$, finally $\deltat>0$
so that $c'_0-\gamma_0 b_{21}-\ep_1 b_{22}
     -(b_{22}\ep_1^{-1} \tdelta^{\delta}
   +b_{23}\tdelta^{\delta/2})>0$.
Now letting $\beta\to \infty$ gives that $e^{\gamma/x}\psi\in L^2_{\scl}(\Xb)$
for $\gamma<\gamma_0$,
as above.
\qed\medskip\end{proof}

\subsection{Absence of positive eigenvalues -- high energy estimates}\label{sec:unique}

We next prove that faster than exponential
decay of an eigenfunction of $H$ implies that it vanishes. This was
also the approach taken by Froese and Herbst. However, we use a different,
more robust, approach to deal with our much larger error terms.
The proof is based on conjugation by $\exp(\alpha/x)$ and letting
$\alpha\to+\infty$. Correspondingly, we require positive commutator
estimates at high energies. In such a setting first order terms are
irrelevant, i.e.\ $V$ does not play a significant role below.
Indeed, we work ``semiclassically'' (writing
$h=\alpha^{-1}$), and the key fact we use is that the commutator
of the real and imaginary parts of the conjugated Hamiltonian has
the correct sign on its characteristic variety.

We start by recalling (see \cite{Zworski:Semiclassical} for a general
introduction to semiclassical analysis,
\cite{Vasy-Zworski:Semiclassical} for a specific discussion in the
scattering setting)  that semiclassical scattering vector
fields
$V\in \Vsch(\Xb)$ are simply $h$-dependent families of vector fields
of the form $hV'$, $V'\in\CI([0,1)_h;\Vsc(\Xb))$, i.e.\ $h$ times
scattering vector fields smoothly depending on $h$. (Note that one may
simply choose to have bounded, not smooth, families of vector fields
$V'$ in $\Vsc(\Xb)$, analogously to how we define pseudodifferential
operators -- since $h$ is a parameter, differentiation in it is not an
issue.) The corresponding
differential operators, $P\in\Diffsch^m(\Xb)$ are finite sums of up to $m$-fold
products of these, with $\CI(\Xb\times[0,1))$ coefficients. Thus, in
local coordinates, such
an operator $P$ is of the form
$$
\sum_{j+|\alpha|\leq m} a_{j,\alpha}(x,y,h) (h x^2 D_x)^j(h xD_y)^\alpha.
$$
Ellipticity of such an operator in the usual, differential, sense is the statement that
$$
\Big|\sum_{j+|\alpha|=m} a_{j,\alpha}(x,y,h) \xi^j\eta^\alpha\Big|\geq
c(|\xi|+|\eta|)^m,\qquad |\xi|+|\eta|>R
$$
for some $c>0$ and $R>0$. Note that the $h$ factors appearing in front
of the derivatives are regarded as parts of the expression, i.e.\ it
is $hx^2D_x$ that is turned into $\xi$, etc. One defines $\Diffscch$
analogously, by allowing symbolic (rather than just smooth)
coefficients, smoothly depending on $h$. Note that if
$A\in\Diffsch^{s,r}(\Xb)$, $B\in\Diffsch^{s',r'}(\Xb)$ then
$$
[A,B]\in h\Diffsch^{s+s'-1,r+r'-1}(\Xb),
$$
i.e.\ in addition to the gain in the two orders, there is also an
extra $h$ gained; there is a similar statement for $\Diffscch$.

If $\Xb$ is the radial
compactification of $\RR^n$, obtained by ``adding a sphere at
  $r=\infty$'' (cf., e.g., \cite{RBMSpec}; not to be confused with a
  one-point compactification familiar to general relativists), as
discussed in Section~\ref{s25X17.1},  this means that $P$ is of the form
$$
\sum_{|\beta|\leq m} b_\beta(z,h) (h D_z)^\beta,
$$
where $b_\beta$ are classical symbols smoothly depending on $h$.

The semiclassical Sobolev norms $\|.\|_{\Hsch^{s,r}(\Xb)}$, for $s\geq 0$ integer,
are, for $h$-dependent families of functions in $\Hsc^{s,r}(\Xb)$,
supported in a coordinate chart,
$$
\|u \|^2_{\Hsch^{s,r}(\Xb)}=\sum_{j+|\alpha|\leq
  s}\|x^{-r}(hx^2D_x)^j(h x D_y)^\alpha u\|^2_{L^2_\scl(\Xb)},
$$
and in general via a partition of unity. In the case of the radial
compactification of $\RR^n$, to which the general case locally
reduces, this is equivalent to
$$
\|u \|^2_{\Hsch^{s,r}(\Xb)}=\sum_{|\beta|\leq s}\|\langle z\rangle^r(hD_z)^\beta u\|^2_{L^2},
$$
i.e.\ they are like standard weighted Sobolev spaces but with an $h$
appearing in front of each derivative.

Similarly, semiclassical scattering pseudodifferential operators $A\in\Psiscch^{s,r}(\Xb)$
reduce to semiclassical pseudodifferential operators on $\RR^n$
resulting from semiclassical quantizations
$$
Au(z)=(2\pi h)^{-n}\int_{\RR^n\times\RR^n} e^{i(z-z')\cdot\zeta/h}a(z,\zeta,h) u(z')\,dz'\,d\zeta
$$
of symbols satisfying estimates of the
kind
$$
|D_z^\alpha D_\zeta^\beta a(z,\zeta,h)|\leq C(\alpha,\beta)(1+|z|)^{r-|\alpha|}(1+|\zeta|)^{s-|\beta|},
$$
i.e.\ uniform (in $h$) families of scattering symbols. Notice the
factor $1/h$ appearing in the exponent; one could change variables to
$\zeta'=\zeta/h$ in the integral, then $a$ would be evaluated at
$(z,h\zeta',h)$, explaining the appearance of $h$ in front of
derivatives in the differential operator discussion above when $a$ is a polynomial in $\zeta$. The standard
results, in particular elliptic estimates, hold, so if $A$ is
elliptic in the scattering sense in this semiclassical context,
meaning that $|a|$ has a comparable positive lower bound for
$|z|+|\zeta|$ large,
then elliptic estimates
$$
\|u\|_{\Hsch^{k,p}(\Xb)}\leq C_{k,k',p.p'}(\|Au\|_{\Hsch^{k-s,p-r}(\Xb)}+\|u\|_{\Hsch^{k',p'}(\Xb)})
$$
hold.
Again, there is a version when ellipticity only holds in the
differential order sense, namely if $|\zeta|$ is large; this assumes that
$u\in \Hsch^{k',p}(\Xb)$ (i.e.\ $p=p'$) and
then is of
the form
$$
\|u\|_{\Hsch^{k,p}(\Xb)}\leq C_{k,k',p}(\|Au\|_{\Hsch^{k-s,p-r}(\Xb)}+\|u\|_{\Hsch^{k',p}(\Xb)});
$$
cf.\ the discussion at the beginning of Section~\ref{sec:poly}.

\begin{thm}\label{thm:unique}
Let $\lambda\in\Real$. If $H\psi
=\lambda\psi$, $\exp(\alpha/x)\psi\in L^2_{\scl}(\Xbext)$ for all $\alpha$,
then $\psi\equiv 0$.
\end{thm}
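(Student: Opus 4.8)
The plan is to run the high‑energy (semiclassical) positive‑commutator, i.e.\ Carleman, argument of Section~\ref{sec:unique}, which is the analogue of \cite[Theorem~4.1]{Vasy:Exponential}, \cite[Proposition~B.3]{Vasy:Propagation-2} and \cite[Theorem~3.1]{FroExp}. Since $\exp(\alpha/x)\psi\in L^2_{\scl}(\Xbext)$ for \emph{every} $\alpha$, and hence $\psi\in\dCinf(\Xbext)$ by weighted elliptic regularity, no regularisation of the weight is needed (this is precisely what makes the unique‑continuation theorem cleaner than Theorem~\ref{thm:exp-decay}): one takes $F=\alpha/x$, cut off so that $\supp F\subset\{x\le 1/2\}$ as in Lemma~\ref{lemma:FH-1}, and sets $\psi_F=e^F\psi$, which lies in $x^pL^2_{\scl}(\Xbext)$ for all $p$. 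Lemma~\ref{lemma:FH-1} then gives $P(F)\psi_F=0$ together with $\re P(F)=H-\alpha^2-\lambda+x^\delta R_2$, $\im P(F)=-2\alpha\,x^2D_x+x^\delta R_3$ and the commutator identity \eqref{eq:FH-3}; from $P(F)\psi_F=0$ one has, as in \eqref{eq:comm-8-0}--\eqref{eq:comm-8},
\[
0=\|\re P(F)\psi_F\|^2+\|\im P(F)\psi_F\|^2+(\psi_F,\sqi [\re P(F),\im P(F)]\psi_F),
\]
so the whole matter reduces to a lower bound for the commutator term modulo the two non‑negative squares.

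First I would rescale semiclassically, putting $h=\alpha^{-1}\in(0,1]$ and $P_h=h^2P(F)\in\Diffscch^{2,0}(\Xbext)$, so that $\re P_h=h^2H-1-h^2\lambda+x^\delta R_2'$ and $\im P_h=-2\,h x^2D_x+x^\delta R_3'$. Under the local identification of a neighbourhood of $\partial\Xb$ with the radial compactification of $\Real^n$, their semiclassical principal symbols are $|\zeta|^2-1$ and $2\,\hat z\cdot\zeta$ ($\hat z=z/|z|$, $\zeta$ the semiclassical momentum), with Poisson bracket
\[
\{|\zeta|^2-1,\ 2\hat z\cdot\zeta\}=4x\big(|\zeta|^2-(\hat z\cdot\zeta)^2\big)\ \ge\ 0,
\]
equal to $4x>0$ on the semiclassical characteristic variety $\{|\zeta|^2=1,\ \hat z\cdot\zeta=0\}$ of $P_h$; this is the high‑energy incarnation of the Mourre estimate of Proposition~\ref{prop:Mourre-est} (used at the shifted energy $\lambda+\alpha^2$ and rescaled by $h^2$). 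Quantising, $\sqi [\re P_h,\im P_h]=h\,\mathrm{Op}_h\!\big(4x(|\zeta|^2-(\hat z\cdot\zeta)^2)\big)+(\text{errors})$, the errors coming in two flavours: those from $g-g_0\in S^{-\delta}$, which carry an extra power $x^{\delta}$, and those from the first‑order part $V$ of $H$ and from semiclassical remainders, which carry an extra power of $h$. Feeding this into the displayed identity with test vector $\psi_F$, using $P_h\psi_F=0$ to absorb all terms involving $\re P_h$ and $\im P_h$, localising near the characteristic variety (off which $\re P_h$ or $\im P_h$ is elliptic, so controls $\psi_F$), and handling cross terms by Cauchy--Schwarz with a small parameter, one arrives at an inequality of the schematic form
\[
\big(c_0h-Ch\deltat^{\delta}-Ch^2\big)\,\|x^{1/2}\psi_F\|^2_{\{x\le\deltat\}}\ \le\ C(\deltat)\,\|\psi\|^2_{L^2_{\scl}(\Xbext)},
\]
valid for small $\deltat$ and then small $h$ (with $\deltat$ chosen \emph{independently of $h$}, since the $x^\delta$ error is $O(\deltat^\delta)$ and the $h$ error is $O(h)$ relative to the main term). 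Fixing such a $\deltat$ and simplifying gives
\[
\|x^{1/2}e^{\alpha/x}\psi\|^2_{\{x\le\deltat\}}\ \le\ C(\deltat)\,\alpha\,\|\psi\|^2_{L^2_{\scl}(\Xbext)}.
\]

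Finally I would let $\alpha\to\infty$. The right‑hand side is bounded by a polynomial in $\alpha$ times the fixed quantity $\|\psi\|^2_{L^2_{\scl}(\Xbext)}$, whereas if $\psi$ did not vanish a.e.\ on $\{x<\deltat'\}$ for some $\deltat'<\deltat$ the left‑hand side would be at least $e^{2\alpha/\deltat'}\int_{\{x<\deltat'\}}x\,|\psi|^2\,dg$, which grows exponentially in $\alpha$ --- a contradiction. Hence $\psi$ vanishes in a neighbourhood of $\partial\Xb$, and standard interior unique continuation for the second‑order elliptic operator $H-\lambda$ (whose coefficients are smooth on $M^\circ$) propagates this across the connected collar $\Xbext$, giving $\psi\equiv0$. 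If the equation is only assumed on the collar, one multiplies $\psi$ by a cut‑off vanishing near $\{x=x_1\}$ as in Remark~\ref{R23XII17.1}; the commutator error terms this introduces are $O(\|\psi\|)$ and do not affect the argument. The main obstacle is the semiclassical positive‑commutator estimate underlying the schematic inequality: one must verify that \emph{every} error term produced by the long‑range metric \eqref{9XII17.1}, by the potential $V$, and by the quantisation either gains a factor $x^{\delta}$ or a factor $h$ relative to the leading term $h\,x$, so that all of them can be absorbed --- this is the technical core that Section~\ref{sec:unique} carries out, adapting \cite[Theorem~4.1]{Vasy:Exponential}.
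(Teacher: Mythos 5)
Your proposal is correct and follows essentially the same route as the paper's proof of Theorem~\ref{thm:unique}: semiclassical conjugation by $e^{\phi(x)\alpha/x}$ with $h=\alpha^{-1}$, the identity $0=\|\re P_h\psi_h\|^2+\|\im P_h\psi_h\|^2+(\psi_h,\sqi[\re P_h,\im P_h]\psi_h)$, positivity of the commutator via the high-energy Mourre computation (your Poisson-bracket calculation $\{|\zeta|^2-1,\,2\hat z\cdot\zeta\}=4x\big(|\zeta|^2-(\hat z\cdot\zeta)^2\big)$ is exactly the principal-symbol content of Proposition~\ref{prop:Mourre-est} as used in \eqref{eq:h-comm-16}), absorption of the $x^{\delta}$- and $h$-gaining error terms, $\alpha\to\infty$, and finally interior unique continuation and the cut-off for $\Xbext$ as in Remark~\ref{R23XII17.1}. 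One small misstatement to flag: after absorbing the error terms, the resulting bound is $\|x^{1/2}\psi_F\|^2\le C(\tilde\delta)\,e^{2\alpha/\tilde\delta}\,\|\psi\|^2$ rather than $C(\tilde\delta)\,\alpha\,\|\psi\|^2$; the exponential factor comes from the region $\{x\ge\tilde\delta\}$, where $|\psi_F|\le e^{\alpha/\tilde\delta}|\psi|$, and cannot be improved to a polynomial in $\alpha$. Fortunately this does not break your concluding step, since you compare against a lower bound $e^{2\alpha/\tilde\delta'}$ with $\tilde\delta'<\tilde\delta$, whose exponent is strictly larger; this is precisely the comparison the paper makes via the monotonicity of $t\mapsto te^{2/t}$, which produces $e^{4/(\tilde\delta h)}$ on the left against $e^{2/(\tilde\delta h)}$ on the right.
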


\begin{proof}
As in the previous proofs, for clarity of the argument we assume first that $(\red{M},g)$ is complete and $\psi\in L^2_{\scl}(\Xb)$. We will present the, essentially notational, changes arising when $\psi\in L^2_{\scl}(\Xbext)$ at the end of the proof.

Let
$$
 F=F_\alpha=\phi(x)\frac{\alpha}{x}
 \,,
$$
where $\phi\in\Cinf_c(\Real)$ is
supported near $0$, identically $1$ in a smaller neighborhood of $0$,
and let $\psi_F:=e^F\psi$. Then with $h=\alpha^{-1}$, $H_h=h^2H(F)$   and
$P_h:=H_h-h^2\lambda$
are elliptic  semiclassical  differential operators, elliptic in the usual
sense of differentiable order (i.e.\ the lower bound for the absolute
value of the principal symbol holds for $|\zeta|$ large, as discussed
above), and
\begin{equation*}
P_h\psi_h=0,\ \psi_h\equiv \psi_F
 \,,
\end{equation*}
so by elliptic regularity,
\begin{equation}\label{eq:h-ell-reg}
\|\psi_h\|_{x^pH^k_{\scl,h}(\Xb)}\leq C_1\|\psi_h\|_{x^pL^2_{\scl}(\Xb)},
\end{equation}
$C_1$ independent of $h\in(0,1]$ (but depends on $k$ and $p$). In general,
below the $C_j$ denote constants independent of $h\in(0,1]$ (and $\delta>0$).

The key step in the proof of this theorem arises from considering
\begin{equation*}
P_h^*P_h=(\re P_h)^2+(\im P_h)^2+i(\re P_h\im P_h-\im P_h\re P_h)
\end{equation*}
so
\begin{equation}\label{eq:h-comm-8}
0=(\psi_h,P_h^*P_h\psi_h)=\|\re P_h\psi_h\|^2+\|\im P_h\psi_h\|^2+
(\psi_h,\sqi [\re P_h,\im P_h]\psi_h).
\end{equation}
The first two terms on the right-hand side are non-negative, so the
key issue is the positivity of the commutator. More precisely, we
need that there exist operators $R_j$ bounded in
$\Diff_{\scl,h}^{2,0}(\Xb)$
such that
\begin{equation}\begin{split}\label{eq:h-comm-16}
(\psi_h&,\sqi [\re P_h,\im P_h]\psi_h)\\
&\geq (\psi_h,
(xh+\re P_h xhR_1+xhR_2\re P_h\\
&\qquad\qquad+\im P_h xhR_3+xhR_4\im P_h+xh^2R_5+x^{1+\delta}hR_6)\psi_h).
\end{split}\end{equation}
The important
point is that when ignoring both $\re P_h$ and $\im P_h$ in the right-hand side, the
commutator is estimated from below by a positive multiple of $xh$,
plus terms $O(xh^2)$ and $O(x^{1+\delta}h)$.

We first prove \eqref{eq:h-comm-16}, and then show how to use it to
prove the theorem. First, modulo terms that will give contributions that
are in the error terms, $\re P_h$ may be replaced by $h^2\Delta_g-1 -
h^2 \lambda$, and indeed $h^2\Delta_g-1$,
while $\im P_h$ may be replaced by $-2h(x^2D_x)$ (compare Lemma~\ref{lemma:FH-1}). Now, by a principal
symbol calculation, see
Proposition~\ref{prop:Mourre-est},
\begin{equation*}\begin{aligned}
 \sqi [h^2\Delta_{g_0} -1,-2h(x^2D_x)]&=\sqi [h^2\Delta_{g_0},-2h(x^2D_x)]\\
&=xh(4h^2\Delta_{g_0}-4h^2(x^2 D_x)^2+R_7),
\ R_7\in x^\rdelta\Diff^2_{\scl,h}(\Xb)\\
&=xh(4+4\re P_h-4\im P_h^2+R_7'), \ R'_7\in x^\rdelta\Diff^2_{\scl,h}(\Xb)\\
\end{aligned}\end{equation*}
which is of the desired form.

We now show how to use \eqref{eq:h-comm-16} to show unique
continuation at infinity. Let $x_0=\sup_{\Xb} x$.
We first remark that
\begin{equation*}\begin{split}
&|(\psi_h, xh R_2\re P_h\psi_h)|\leq C_2 h\|x  \psi _h\|\|\re P_h\psi_h\|
\leq C_2 h\|x  \psi _h\|^2+C_2 h\|\re P_h\psi_h\|^2,\\
&|(\psi_h, xh R_4\im P_h\psi_h)|\leq C_3 h\|x  \psi _h\|\|\im P_h\psi_h\|
\leq C_3 h\|x  \psi _h\|^2+C_3 h\|\im P_h\psi_h\|^2,
\end{split}\end{equation*}
with similar expressions for the $R_1$ and $R_3$ terms in
\eqref{eq:h-comm-16}. Next,
\begin{equation*}\begin{split}
 &|(\psi_h,xh^2R_5\psi_h)|\leq C_4 h^2\|x^{1/2}\psi_h\|^2
 \,,
\\
 &|(\psi_h,x^{1+\delta}h R_6\psi_h)|\leq C_5 h\|x^{(1+\delta)/2}\psi_h\|^2
  \,.
\end{split}\end{equation*}
For $\deltat>0$, in $x\geq\deltat$, $|\psi_h|=e^{1/{xh}}|\psi|\leq
e^{1/(\deltat h)}|\psi|$, so
\begin{equation*}\begin{split}
\|x^{(1+\delta)/2}\psi_h\|^2&=\|x^{(1+\delta)/2}\psi_h\|^2_{x\leq\deltat}+\|x^{(1+\delta)/2}\psi_h\|^2_{x\geq\deltat}\\
&\leq \deltat^\delta\|x^{1/2}\psi_h\|^2_{x\leq\deltat}
+x_0^{1+\delta} e^{2/(\deltat h)}\|\psi\|^2_{x\geq\deltat}\\
&\leq \deltat^\delta\|x^{1/2}\psi_h\|^2
+x_0^{1+\delta} e^{2/(\deltat h)}\|\psi\|^2.
\end{split}\end{equation*}
Thus,
\begin{equation*}
\|(\psi_h,x^{1+\delta}h R_6\psi_h)|\leq C_5 h\deltat^\delta\|x^{1/2}\psi_h\|^2
+C_5 x_0^{1+\delta} he^{2/(\deltat h)}\|\psi\|^2.
\end{equation*}
Hence, we deduce from \eqref{eq:h-comm-8}-\eqref{eq:h-comm-16} that
\begin{equation}
 \begin{split}
  0\geq & (1-C_6 h)\|\re P_h\psi_h\|^2
  +(1-C_7 h)\|\im P_h\psi_h\|^2
\\
&+h(1-C_8 h-C_9\deltat^\delta)\|x^{1/2}\psi_h\|^2
-C_{10}he^{2/(\deltat h)}\|\psi\|^2.
\end{split}
 \label{23XII17.1+}
\end{equation}
Hence, there exists $h_0>0$ such that for $h\in(0,h_0)$,
\begin{equation}\label{eq:h-comm-64}
C_{10} h e^{2/(\deltat h)}\|\psi\|^2
\geq h(\frac{1}{2}-C_9\deltat^\delta)\|x^{1/2}\psi_h\|^2
\end{equation}

Now suppose that $\deltat\in \big(0,\min(\big(\frac{1}{4C_9}\big)^{1/\delta},\frac{1}{h_0})\big)$ and
$\supp\psi\cap\{
x\leq\frac{\deltat}{4}\}$ is non-empty.
Since $xe^{2/xh}=h^{-1}f(xh)$ where $f(t)=te^{2/t}$, and $f$ is decreasing
on $(0,2)$ (its minimum on $(0,\infty)$ is assumed at $2$), we deduce
that for $x\leq\deltat/2$, $x e^{2/xh}\geq \frac{\deltat}{2} e^{4/(\deltat h)}$,
so
\begin{equation*}
\|x^{1/2}\psi_h\|^2\geq C_{11} \deltat e^{4/(\deltat h)},\ C_{11}>0.
\end{equation*}
Thus, we conclude from \eqref{eq:h-comm-64} that
\begin{equation}
 \label{23XII17.2}
C_{10}\|\psi\|^2\geq (\frac{1}{2}-C_9\deltat^\delta)C_{11} \deltat
e^{2/(\deltat h)}.
\end{equation}
But letting $h\to 0$,
the right-hand side goes to $+\infty$, providing a contradiction.

Thus, $\psi$ vanishes for $x\leq \deltat/4$, hence vanishes
identically on $\Xb$ by the usual Carleman-type unique continuation theorem
\cite[Theorem~17.2.1]{Hor}, and when $\psi\in L^2_{\scl}(\Xb)$ we are done.

As already hinted at, the result for $\psi\in L^2_{\scl}(\Xbext)$ follows by change of notation. Namely, let now $\tilde \psi_h$ denote the original eigenfunction, thus we have $P_h\tilde \psi_h=0$. Let $\psi_h=\chi\tilde\psi_h$, where $\chi\equiv 1$  on a neighborhood of $\supp\phi$, i.e.\ $\supp F$, but still supported in a collar neighborhood of the Euclidean end. We have
$$
P_h\psi_h=\chi P_h\tilde\psi_h+[P_h,\chi]\tilde\psi_h=[P_h,\chi]\tilde\psi_h,
$$
and by construction, namely on $\supp d\chi$ where the weight $F$ vanishes, we have
$$
P_h\psi_h=[P_h,\chi]\tilde\psi
 \,.
$$
In particular all semiclassical Sobolev norms of $P_h\psi_h$ are bounded by a constant $C$ (independent of $h\in(0,1]$, of course depending on the norm).

We repeat the argument above. The left hand side of  \eqref{eq:h-comm-8}
is not zero anymore, rather $\|P_h\psi_h\|^2$, which is bounded by $C^2$. The rest of the computation is unchanged until \eqref{23XII17.1+}, where the left-hand side becomes $C^2$ instead of $0$. Hence, \eqref{eq:h-comm-64} also has a $C^2$ added to the left-hand side, and then
\eqref{23XII17.2} becomes
\begin{equation}
 \label{23XII17.3}
C_{10}\|\psi\|^2+C^2 h^{-1}e^{-2\tdelta/h}\geq (\frac{1}{2}-C_9\deltat^\delta)C_{11} \deltat
e^{2/(\deltat h)}.
\end{equation}
Since the new term also goes to $0$ as $h\to 0$, the final step of the argument is unchanged, whence $\psi$ vanishes for small $x$.

We conclude again that our original eigenfunction $\tilde\psi$ vanishes for small $x$, and the usual elliptic unique continuation result finishes the proof.
\qed\medskip\end{proof}

\bigskip

\noindent{\sc Acknowledgements:}   PTC was supported in part by  the Austrian Science Fund (FWF) under project P 23719-N16, and by
Narodowe Centrum Nauki under the grant DEC-2011/03/B/ST1/02625.
AV thanks the NSF for partial support under grant number DMS-1664683, and the Simons Foundation for partial support via a Simons fellowship grant.
  The authors are grateful to the Erwin Schr\"odinger Institute, Vienna, for hospitality and support during part of work on this paper.

\providecommand{\bysame}{\leavevmode\hbox to3em{\hrulefill}\thinspace}
\providecommand{\MR}{\relax\ifhmode\unskip\space\fi MR }
\providecommand{\MRhref}[2]{%
  \href{http://www.ams.org/mathscinet-getitem?mr=#1}{#2}
}
\providecommand{\href}[2]{#2}

%
%

\begin{thebibliography}{10}

\bibitem{Almgren-1979}
F.J. Almgren, Jr., \emph{Dirichlet's problem for multiple valued functions and
  the regularity of mass minimizing integral currents}, Minimal submanifolds
  and geodesics ({P}roc. {J}apan-{U}nited {S}tates {S}em., {T}okyo, 1977),
  North-Holland, Amsterdam-New York, 1979, pp.~1--6. \MR{574247}

\bibitem{ChBeig1}
R.~Beig and P.T. Chru\'{s}ciel, \emph{Killing vectors in asymptotically flat
  space-times: {I. A}symptotically translational {K}illing vectors and the
  rigid positive energy theorem}, Jour.\ Math.\ Phys. \textbf{37} (1996),
  1939--1961, arXiv:gr-qc/9510015.

\bibitem{Bernstein-2017}
J.~Bernstein, \emph{Asymptotic structure of almost eigenfunctions of drift
  {L}aplacians on conical ends},  (2017), arXiv:1708.07085v2 [math.DG].

\bibitem{CTMason}
P.T. Chru\'{s}ciel and P.~Tod, \emph{On {Mason}'s rigidity theorem}, Commun.\
  Math.\ Phys. \textbf{285} (2009), 1--29, arXiv:gr-qc/070xxx. \MR{2453589}

\bibitem{Duistermaat-Sjostrand:Global}
J.~J. Duistermaat and J.~Sj{\"o}strand, \emph{A global construction for
  pseudo-differential operators with non-involutive characteristics}, Invent.
  Math. \textbf{20} (1973), 209--225. \MR{49 \#9681}

\bibitem{FroExp}
R.~G. Froese and I.~Herbst, \emph{Exponential bounds and absence of positive
  eigenvalues of {N}-body {S}chr\"odinger operators}, Commun. Math. Phys.
  \textbf{87} (1982), 429--447.

\bibitem{FroMourre}
\bysame, \emph{A new proof of the {M}ourre estimate}, Duke Math. J. \textbf{49}
  (1982), 1075--1085.

\bibitem{GarofaloLin-1987}
N.~Garofalo and F.-H. Lin, \emph{Unique continuation for elliptic operators: a
  geometric-variational approach}, Commun.\ Pure Appl.\ Math. \textbf{40}
  (1987), no.~3, 347--366. \MR{882069}

\bibitem{Hormander:Solutions}
L.~H{\"o}rmander, \emph{Differential equations without solutions}, Math. Ann.
  \textbf{140} (1960), 169--173. \MR{26 \#5279}

\bibitem{Hormander:Differential}
\bysame, \emph{Differential operators of principal type}, Math. Ann.
  \textbf{140} (1960), 124--146. \MR{24 \#A434}

\bibitem{Hor}
L.~H\"ormander, \emph{The analysis of linear partial differential operators,
  {\rm vol. 1-4}}, Springer-Verlag, 1983.

\bibitem{Hormander:Uniqueness}
L.~H{\"o}rmander, \emph{Uniqueness theorems for second order elliptic
  differential equations}, Commun. Partial Differential Equations \textbf{8}
  (1983), no.~1, 21--64. \MR{85c:35018}

\bibitem{HuangLee}
L.-H. Huang and D.A. Lee, \emph{Rigidity of the spacetime positive mass
  theorem}, arXiv:1706.03732[math.DG].

\bibitem{Jerison:Carleman}
D.~Jerison, \emph{Carleman inequalities for the {D}irac and {L}aplace operators
  and unique continuation}, Adv. in Math. \textbf{62} (1986), no.~2, 118--134.
  \MR{88b:35218}

\bibitem{Jerison-Kenig:Unique}
D.~Jerison and C.E. Kenig, \emph{Unique continuation and absence of positive
  eigenvalues for {S}chr\"odinger operators}, Ann. of Math. (2) \textbf{121}
  (1985), no.~3, 463--494, With an appendix by E. M. Stein. \MR{87a:35058}

\bibitem{MV}
M.~A.~H. MacCallum and N.~Van~den Bergh, \emph{Noninheritance of static
  symmetry by maxwell fields}, Galaxies, Axisymmetric Systems and Relativity
  (M.~A.~H. MacCallum, ed.), Cambridge University Press, Cambridge, 1985.

\bibitem{Mason:AlgSpec}
L.J. Mason, \emph{The asymptotic structure of algebraically special
  spacetimes}, Class.\ Quantum Grav. \textbf{15} (1998), 1019--1030.
  \MR{99d:83035}

\bibitem{Mazzeo-continuation}
R.~Mazzeo, \emph{Unique continuation at infinity and embedded eigenvalues for
  asymptotically hyperbolic manifolds}, Amer.\ Jour.\ Math. \textbf{113}
  (1991), 25--45. \MR{MR1087800 (92f:58187)}

\bibitem{mc}
C.~B.~G. McIntosh, \emph{Einstein-{M}axwell space-times with symmetries and
  with non-null electromagnetic fields}, General Relativity and Gravitation
  \textbf{9} (1978), no.~4, 227--288. \MR{0475649}

\bibitem{RBMSpec}
R.B. Melrose, \emph{Spectral and scattering theory for the laplacian on
  asymptotically euclidean spaces}, Marcel Dekker, 1994,
  \url{http://www-math.mit.edu/~rbm/papers/sslaes/sslaes1.pdf}.

\bibitem{Mourre:Operateurs}
E.~Mourre, \emph{Operateurs conjug\'es et propri\'et\'es de propagation},
  Commun. Math. Phys. \textbf{91} (1983), 279--300.

\bibitem{mhrs}
H.~M\"uller~zum Hagen, David~C. Robinson, and H.~J. Seifert, \emph{Black holes
  in static electrovac space-times}, General Relativity and Gravitation
  \textbf{5} (1974), no.~1, 61--72. \MR{0411517}

\bibitem{Parenti:Operatori}
C.~Parenti, \emph{Operatori pseudo-differenziali in {$R^{n}$} e applicazioni},
  Ann. Mat. Pura Appl. (4) \textbf{93} (1972), 359--389. \MR{0437917 (55
  \#10838)}

\bibitem{Perry-Sigal-Simon:Spectral}
P.~Perry, I.~M. Sigal, and B.~Simon, \emph{Spectral analysis of {N}-body
  {S}chr\"odinger operators}, Ann. Math. \textbf{114} (1981), 519--567.

\bibitem{Petersen}
P.~Petersen, \emph{Riemannian geometry}, second ed., Graduate Texts in
  Mathematics, vol. 171, Springer, New York, 2006. \MR{2243772}

\bibitem{RodnianskiTaoLAP}
I.~Rodnianski and T.~Tao, \emph{Effective limiting absorption principles, and
  applications}, Commun.\ Math. Phys. \textbf{333} (2015), 1--95. \MR{3294943}

\bibitem{skmhh}
H.~Stephani, D.~Kramer, M.~MacCallum, C.~Hoenselaers, and E.~Herlt, \emph{Exact
  solutions of {E}instein's field equations}, second ed., Cambridge Monographs
  on Mathematical Physics, Cambridge University Press, Cambridge, 2003.
  \MR{2003646}

\bibitem{Shubin:Pseudodifferential}
M.A. {\v{S}}ubin, \emph{Pseudodifferential operators in {$R^{n}$}}, Dokl. Akad.
  Nauk SSSR \textbf{196} (1971), 316--319. \MR{0273463 (42 \#8341)}

\bibitem{t2}
Paul Tod, \emph{Analyticity of strictly static and strictly stationary,
  inheriting and non-inheriting {E}instein-{M}axwell solutions}, Gen.
  Relativity Gravitation \textbf{39} (2007), no.~7, 1031--1042. \MR{2322613}

\bibitem{t1}
\bysame, \emph{Conditions for non-existence of static or stationary,
  {E}instein-{M}axwell, non-inheriting black-holes}, Gen. Relativity
  Gravitation \textbf{39} (2007), no.~2, 111--127. \MR{2322559}

\bibitem{Vasy:Propagation-2}
A.~Vasy, \emph{Propagation of singularities in three-body scattering},
  Ast\'erisque (2000), no.~262, vi+151. \MR{1744795}

\bibitem{Vasy:Exponential}
A.~Vasy, \emph{Exponential decay of eigenfunctions in many-body type scattering
  with second order perturbations}, J. Func. Anal. \textbf{209} (2004),
  468--492.

\bibitem{Vasy:Minicourse}
\bysame, \emph{A minicourse on microlocal analysis for wave propagation},
  Asymptotic Analysis in General Relativity, London Mathematical Society
  Lecture Note Series, Cambridge University Press, to appear.

\bibitem{Vasy-Zworski:Semiclassical}
A.~Vasy and M.~Zworski, \emph{Semiclassical estimates in asymptotically
  {E}uclidean scattering}, Commun. Math. Phys. \textbf{212} (2000), 205--217.

\bibitem{Zworski:Numerical}
M.~Zworski, \emph{Numerical linear algebra and solvability of partial
  differential equations}, Commun. Math. Phys. \textbf{229} (2002), no.~2,
  293--307. \MR{1 923 176}

\bibitem{Zworski:Semiclassical}
\bysame, \emph{Semiclassical analysis}, Graduate Studies in Mathematics, vol.
  138, American Mathematical Society, Providence, RI, 2012. \MR{2952218}

\end{thebibliography}

\end{document}